\numberwithin{equation}{section}
\newcommand{\RR}{{\mathds R}}
\newcommand{\NN}{{\mathds N}}
\newcommand{\ZZ}{{\mathds Z}}
\newcommand{\Area}{{\rm{Area}}}
\def\ker{{\rm{ker}}}
\def\a{\mathfrak{a}}
\def\b{\mathfrak{b}}
\def\c{\mathfrak{c}}
\def\d{\mathfrak{d}}
\theoremstyle{plain}
\newtheorem{theorem}{Theorem}[section]
\newtheorem{corollary}[theorem]{Corollary}
\newtheorem{lemma}[theorem]{Lemma}
\newtheorem{proposition}[theorem]{Proposition}
\newtheorem{question}{Question}
\newtheorem*{question*}{Question}
\theoremstyle{definition}
\newtheorem{remark}[theorem]{Remark}
\newtheorem*{acknowledgements*}{Acknowledgements}
\newtheorem{example}[theorem]{Example}
\newtheorem{definition}[theorem]{Definition}
\newtheorem*{notation*}{Notation}
\newtheorem*{convention*}{Convention}
\newcommand*\circled[1]{\tikz[baseline=(char.base)]{
		\node[shape=circle,draw,inner sep=1pt] (char) {\tiny #1};}}
\title{Dehn functions of coabelian subgroups of direct products of groups}
\author{Robert Kropholler}
\address{Mathematics Institute, University of Warwick, Coventry CV4 7AL, United Kingdom}
\email{robertkropholler@gmail.com}
\author{Claudio Llosa Isenrich}
\address{Faculty of Mathematics, KIT, Englerstr. 2, 76131 Karlsruhe, Germany}
\email{claudio.llosa@kit.edu}
\thanks{The first author was funded by the Deutsche Forschungsgemeinschaft (DFG, German Research Foundation) under Germany's Excellence Strategy EXC 2044-390685587, Mathematics M\"unster: Dynamics-Geometry-Structure. The second author was partially funded by the Lise Meitner Fellowship M2811-N of the Austrian Science Fund (FWF)}
\keywords{Dehn functions, Direct products, Coabelian subgroups, Finiteness properties, Subgroups of products of free groups}
\subjclass[2020]{20F65 (20F05, 20F06, 20F69)}
\begin{document}

\begin{abstract}
We develop new methods for computing the precise Dehn functions of coabelian subgroups of direct products of groups, that is, subgroups which arise as kernels of homomorphisms from the direct product onto a free abelian group. These improve and generalise previous results by Carter and Forester on Dehn functions of level sets in products of simply connected cube complexes, by Bridson on Dehn functions of cocyclic groups and by Dison on Dehn functions of coabelian groups. We then provide several applications of our methods to subgroups of direct products of free groups, to groups with interesting geometric finiteness properties and to subgroups of direct products of right-angled Artin groups.
\end{abstract}

\maketitle

\section{Introduction}
The properties of groups arising as kernels of maps onto $\ZZ$ have attracted a lot of attention in various areas of group theory. In particular, this concerns their finiteness properties (e.g. \cite{Sta-63, Bie-76, BesBra-97}). A quantitative approach to understanding the finiteness properties of groups is to study how difficult it is to detect if loops (or, more generally, spheres) in a $K(G,1)$ of a group $G$ are null-homotopic. For loops this is measured by the Dehn function $\delta_G(n)$ of the group $G$, which is defined as the maximal area that a minimal filling disc of a loop of length at most $n$ can have. The Dehn function constitutes an important asymptotic invariant with connections to fundamental problems in various areas, including in geometry in the form of isoperimetric functions and in combinatorial group theory in the form of the solvability of the word problem. 

The connection between finiteness properties and Dehn functions makes it particularly tempting to study the Dehn functions of groups with interesting finiteness properties. We say that a group $G$ is of finiteness type $\mathcal{F}_k$ if it admits a $K(G,1)$ which is a CW-complex with finitely many cells of dimension $\leq k$. Historically, the first examples of groups of type $\mathcal{F}_{k-1}$ and not $\mathcal{F}_{k}$ for all $k\geq 3$, were the Stallings--Bieri groups $SB_k$ which arise as kernels $SB_k={\rm ker } (F_2 \times \dots\times F_2\to \ZZ)$ of homomorphisms from products of $k$ free groups onto $\ZZ$ \cite{Sta-63,Bie-76}. Determining their precise Dehn functions turned out to be a challenging problem \cite{Ger-95,BauBriMilSho-97,Bri-99,DisEldRilYou-09}
 that was only resolved in full generality very recently by Carter and Forester \cite{CarFor-17}. 
 
 These results raise the question if one can also understand the Dehn functions and finiteness properties of more general subgroups $K\leq G_1\times \dots \times G_n$ of direct products of groups and in particular of {\emph{coabelian subgroups (of corank $l$)} arising as kernel of a surjective homomorphism $\phi: G_1\times \dots \times G_n\to \ZZ^l$. The finiteness properties of such subgroups have been studied by various authors \cite{BriHowMilSho-02, BriHowMilSho-13, Koc-10, Kuc-14}. They are now well-understood when the $G_i$ are non-abelian limit groups and $K$ is coabelian. A first general study of Dehn functions of subgroups of direct products of groups was performed by Bridson \cite{Bri-01} in the cocyclic case and by Dison \cite{Dis-08} in the coabelian case. Other results in this area include the ones mentioned in the previous paragraph, as well as \cite{LloTes-18-II}. However, we are still far from a complete description of the Dehn functions that can arise, even if we assume that the $G_i$ are free groups and that $K$ is coabelian. Indeed, the only such examples for which the precise Dehn function is known are the ones which are virtually a direct product of Stallings--Bieri groups and free groups.

The goal of this work is to develop new methods for computing the precise Dehn functions of coabelian subgroups of direct products of groups in terms of the Dehn functions of the factors. To obtain them we translate the main result of \cite{CarFor-17} to an algebraic setting and then generalise it in two different ways. We will then provide several applications of our methods to subgroups of direct products of free groups (SPFs), to 1-ended irreducible groups with interesting finiteness properties, and to subgroups of direct products of right-angled Artin groups. The application to SPFs is one of the main motivations for our work. In particular, we can now determine the precise Dehn functions for a large family of such groups considered in \cite{Dis-08}. 

\subsection{Dehn functions of coabelian subgroups of direct products of groups}
In their work Carter and Forester \cite{CarFor-17} provide upper bounds on Dehn functions of level sets of $\RR$-valued height functions on direct products of simply connected cube complexes. To obtain their bounds they introduce a cell structure induced by subdivision of the cube complex along the level sets of the height function. They then deduce area bounds in terms of the combinatorial area of edge loops in this cell structure. Related approaches have been used for estimating filling functions in horospheres in symmetric spaces with a product structure (see Gromov \cite{Gro-93} and Drutu \cite{Dru-04}).

We generalise the methods in \cite{CarFor-17} and translate them into an algebraic setting. Roughly speaking our generalisation consists of replacing height maps to $\RR$ by ``multi-dimensional'' height maps to $\RR^m$ which admit a splitting. In algebraic terms this equates to split epimorphisms onto $\ZZ^m$. This formulation in algebraic terms allows us to prove our results for arbitrary groups that do not need to admit an action on a simply connected cube complex. As a consequence we can compute the precise Dehn function of many kernels of homomorphisms from direct products of at least three groups onto free abelian groups.
\begin{restatable*}{theorem}{algebraictriangle}\label{thm:algebraictriangle}
	For $1\leq i \leq 3$ let $G_i$ be finitely presented groups and let $1\to N_i\to G_i\xrightarrow{\phi_i} \ZZ^m \to 1$ be right-split short exact sequences. 
	Let $\phi\colon G_1\times G_2\times G_3\to \ZZ^m$ be defined by $\phi(g_1, g_2, g_3) = \sum_{i=1}^3 \phi_i(g_i)$. Let $\overline{f}$ be the superadditive closure of the Dehn function $f$ of $G_1\times G_2\times G_3$.
	Then $K:=\ker(\phi)$ is finitely presented and its Dehn function satisfies $f(n)\preccurlyeq \delta_{K}(n)\preccurlyeq \overline{f}(n) \cdot log(n)$. If, moreover, $\frac{f(n)}{n}$ is superadditive then $f(n)\asymp \delta_K(n)$.
\end{restatable*}
Here the \emph{superadditive closure} of $f:\NN_{>0}\to \RR_{>0}$ is the smallest function $\overline{f}(n)$, which is bounded below by $f(n)$ and satisfies $\overline{f}(n+m)\geq \overline{f}(n)+\overline{f}(m)$ for $n,m >0$.  The moreover-part of our result is particularly interesting, because we are not aware of any non-linear Dehn function of a finitely presented group for which $f(n)/n$ is not superadditive (see also \cite{GubSap-99}). In particular, every function of the form $n\mapsto n^{a}$, with $a\geq 2$, and $n \mapsto e^{n}$ has this property.

As a direct consequence we obtain the following improvement of a result of Bridson:
\begin{corollary}\label{cor:bridson}
 Let $\phi: G_1\times G_2\times G_3\to \ZZ$ be a homomorphism whose restriction to each of the $G_i$ is surjective and let $f(n)$ be the Dehn function of $G_1\times G_2\times G_3$. Then the Dehn function of $K:=\ker(\phi)$ satisfies $f(n)\preccurlyeq \delta_K(n)\preccurlyeq \overline{f}(n)\cdot log(n)$.
\end{corollary}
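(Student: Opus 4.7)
The plan is to derive Corollary~\ref{cor:bridson} as the rank-one case ($m=1$) of Theorem~\ref{thm:algebraictriangle}; the only work is to verify its two hypotheses, namely that each factor-wise restriction gives a right-split short exact sequence and that $\phi$ splits additively over the three factors.

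First I would define $\phi_i := \phi\circ \iota_i\colon G_i\to \ZZ$, where $\iota_i\colon G_i\hookrightarrow G_1\times G_2\times G_3$ is the canonical inclusion into the $i$-th factor. By hypothesis each $\phi_i$ is surjective. Since the images of the $\iota_i$ pairwise commute and together generate $G_1\times G_2\times G_3$, we have $(g_1,g_2,g_3) = \iota_1(g_1)\iota_2(g_2)\iota_3(g_3)$, and applying $\phi$ yields $\phi(g_1,g_2,g_3) = \phi_1(g_1)+\phi_2(g_2)+\phi_3(g_3)$, which is exactly the additivity condition required by Theorem~\ref{thm:algebraictriangle}.

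Next, I would produce right-splittings for each sequence $1\to \ker(\phi_i)\to G_i\xrightarrow{\phi_i}\ZZ\to 1$. Since $\ZZ$ is free, every surjection onto $\ZZ$ splits: pick any $g_i\in G_i$ with $\phi_i(g_i)=1$ and set $s_i\colon \ZZ\to G_i$, $s_i(n) = g_i^n$; then $\phi_i\circ s_i = \mathrm{id}_{\ZZ}$. With both hypotheses satisfied, Theorem~\ref{thm:algebraictriangle} applied with $m=1$ delivers $f(n)\preccurlyeq \delta_K(n)\preccurlyeq \overline{f}(n)\cdot \log(n)$, which is exactly the statement.

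I expect no real obstacle here: all the difficulty of the result is absorbed into Theorem~\ref{thm:algebraictriangle}, and the corollary is essentially a direct unpacking of definitions that recasts Bridson's cocyclic setting as the rank-one specialisation of the coabelian framework. The one point worth being careful about is that the hypothesis ``the restriction of $\phi$ to each $G_i$ is surjective'' is genuinely needed to produce the individual splittings $s_i$ (mere surjectivity of the global $\phi$ would not suffice), but this is built into the assumption of the corollary.
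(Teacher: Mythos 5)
Your proposal is correct and matches the paper's (implicit) reasoning: the paper states the corollary as a ``direct consequence'' of Theorem~\ref{thm:algebraictriangle} without writing out a proof, and your argument supplies exactly the expected verification — that each $\phi_i := \phi\circ\iota_i$ is a surjection onto $\ZZ$ (hence splits, $\ZZ$ being free), and that $\phi$ decomposes as $\sum_i\phi_i$ because $\ZZ$ is abelian and the $\iota_i(G_i)$ generate the product.
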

Bridson \cite[Theorem 0.3]{Bri-01} showed that $\delta_K(n)\preccurlyeq n\cdot \delta_{G_1\times G_2}(n)+\delta_{G_3}(n)$.

Replacing splittings by a more general concept, which we call $P$-splitting (see \cref{def:Psplit}), allows us to prove a variation of Theorem \ref{thm:algebraictriangle}.
\begin{restatable*}{theorem}{algebraicsquare}\label{thm:algebraicsquare}
	Let $G_1, \dots, G_4$ be finitely presented groups and let $1\to N_i\to G_i\xrightarrow{\phi_i} \ZZ^m\to 1$ be short exact sequences. Suppose that there is a $P$ such that each of these sequences $P$-splits. 
	Let $\phi\colon G_1\times G_2\times G_3\times G_4\to \ZZ^m$ be defined by $\phi(g_1,g_2,g_3,g_4)=\sum_{i=1}^4 \phi_i(g_i)$ and let $f(n)$ be the Dehn function of $G_1\times \dots \times G_4$. 
	Then $K = \ker(\phi)$ is finitely presented and its Dehn function satisfies $f(n)\preccurlyeq \delta_K(n)\preccurlyeq \overline{f}(n)\cdot log(n)$. 
	If, moreover, $\frac{f(n)}{n}$ is superadditive then $f(n)\asymp \delta_K(n)$.
\end{restatable*}

In a similar vein to \Cref{cor:bridson}, we can now make a statement about kernels of maps to $\ZZ^2$. 

\begin{corollary}
	Let $\phi: G_1\times G_2\times G_3\times G_4\to \ZZ^2$ be a homomorphism whose restriction to each of the $G_i$ is surjective and let $f(n)$ be the Dehn function of $G_1\times G_2\times G_3\times G_4$. Then the Dehn function of $K:=\ker(\phi)$ satisfies $f(n)\preccurlyeq \delta_K(n)\preccurlyeq \overline{f}(n)\cdot log(n)$.
\end{corollary}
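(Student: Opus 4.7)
The plan is to derive the corollary from \cref{thm:algebraicsquare}, in exactly the same way that \cref{cor:bridson} is derived from \cref{thm:algebraictriangle}. I set $m=2$, $\phi_i := \phi|_{G_i}$, and $N_i := \ker(\phi_i)$. By the surjectivity hypothesis, this produces four short exact sequences $1 \to N_i \to G_i \xrightarrow{\phi_i} \ZZ^2 \to 1$, and because $\phi$ is a homomorphism on a direct product, for any $(g_1,g_2,g_3,g_4)$ we have $\phi(g_1,g_2,g_3,g_4)=\sum_{i=1}^4 \phi_i(g_i)$. Hence $K=\ker(\phi)$ is precisely the kernel considered in \cref{thm:algebraicsquare}. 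The only hypothesis of that theorem which is not immediate from the data of the corollary is the existence of a single $P$ so that each of the four sequences $P$-splits.

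The key step is therefore to exhibit such a common $P$. For each $i$, since $\phi_i$ is surjective I can choose elements $a_i, b_i \in G_i$ that map to the standard generators of $\ZZ^2$. The set-theoretic map $\ZZ^2 \to G_i$, $(p,q) \mapsto a_i^p b_i^q$, is a natural candidate section; its failure to be a group homomorphism is controlled by the single element $[a_i,b_i]$, which lies in $N_i$ and has bounded word length because $G_i$ is finitely presented. This data should produce a function $P_i$ for which the $i$-th sequence $P_i$-splits in the sense of \cref{def:Psplit}. Since there are only four indices, the pointwise maximum $P := \max_i P_i$ is a single function for which all four sequences simultaneously $P$-split.

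With this common $P$ in hand, \cref{thm:algebraicsquare} applies directly and yields the stated bounds $f(n) \preccurlyeq \delta_K(n) \preccurlyeq \overline{f}(n)\cdot \log(n)$. The only potential obstacle lies in checking that the informal argument about $[a_i,b_i]$ above really fits the formal requirements of $P$-splitting given in \cref{def:Psplit}; given the parallel with the corank $1$ case, where the passage from \cref{thm:algebraictriangle} to \cref{cor:bridson} is essentially automatic, this should amount to a routine unpacking of definitions rather than a genuine difficulty.
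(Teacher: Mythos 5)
Your overall plan—reduce to \cref{thm:algebraicsquare} by producing a single $P$ for which all four short exact sequences $P$-split—is the right one, and you have even identified the relevant generators $a_i,b_i\in G_i$. But your key step rests on a misreading of \cref{def:Psplit}: a $P$-splitting is not a quantitative notion, and $P$ is not a ``function.'' It is a \emph{factoring} $P=\{A,B\}$ of $\ZZ^m$ into a direct sum of two subgroups, and $P$-splitting requires genuine group-homomorphism sections $s^1\colon A\to G_i$ and $s^2\colon B\to G_i$ of $\phi_i$. Nothing in the definition measures ``how far'' a set-theoretic section fails to be a homomorphism, so the commutator $[a_i,b_i]$ plays no role, and the phrase ``pointwise maximum $P:=\max_i P_i$'' is not meaningful: you cannot take a maximum of factorings.

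The actual argument is shorter than what you attempted. Fix once and for all the factoring $P=\{A,B\}$ with $A=\ZZ\times\{0\}$ and $B=\{0\}\times\ZZ$, so $\ZZ^2=A\oplus B$. For each $i$, since $\phi_i$ is surjective there exist $a_i,b_i\in G_i$ with $\phi_i(a_i)=(1,0)$ and $\phi_i(b_i)=(0,1)$. Because $A$ and $B$ are infinite cyclic (hence free), the assignments $n\mapsto a_i^n$ and $n\mapsto b_i^n$ define group homomorphisms $s_i^1\colon A\to G_i$ and $s_i^2\colon B\to G_i$ with $\phi_i\circ s_i^1=\mathrm{id}_A$ and $\phi_i\circ s_i^2=\mathrm{id}_B$. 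Thus each sequence $1\to N_i\to G_i\to\ZZ^2\to 1$ $P$-splits for the \emph{same} $P$, and \cref{thm:algebraicsquare} applies to give $f(n)\preccurlyeq\delta_K(n)\preccurlyeq\overline{f}(n)\cdot\log(n)$. (The failure of the combined map $(p,q)\mapsto a_i^pb_i^q$ to be a homomorphism is exactly why $\phi_i$ need not split, which is what makes the weaker notion of $P$-splitting useful.)
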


The number of factors here cannot be reduced. Indeed, the kernel of the homomorphism $\phi\colon F_2\times F_2\times F_2\to \ZZ^2$ defined by abelianization on the factors satisfies a cubic lower bound on its Dehn function \cite{Dis-09}.

\begin{remark}
	Our proofs provide a constructive way of filling a loop with a disk. One can apply a similar construction for the homological finite presentations from \cite{BKS} to bound the homological Dehn function when each $G_i$ is of type $FP_2$. 
	More precisely, if in \cref{thm:algebraictriangle} or \cref{thm:algebraicsquare} we assume that each $G_i$ is of type $FP_2$ (rather than finitely presented), then $K$ is of type $FP_2$ and $f(n)\preccurlyeq \delta_K(n)\preccurlyeq f(n)\cdot log(n)$, where $f$ is the homological Dehn function of $\prod_i G_i$. For the upper bound one uses that homological Dehn functions are always superadditive \cite[Proposition 2.20]{BKS}.
\end{remark}

\begin{remark}\label{rem:Disons-Thm-11-3-4}
As a further application of Theorem \ref{thm:algebraicsquare}, we will improve a result of Dison \cite[Theorem 11.3 (4)]{Dis-08}, which provides upper bounds on Dehn functions of kernels of certain homomorphisms from direct products of finitely presented groups onto $\ZZ^l$ (see Theorem \ref{thm:Disons-Thm-11-3-4} for a precise statement). In particular, we can compute their precise Dehn functions if $f(n)/n$ is superadditive, where $f(n)$ is defined as in Theorem \ref{thm:algebraicsquare}.
\end{remark}

We will provide several applications of Theorems \ref{thm:algebraictriangle} and \ref{thm:algebraicsquare}. They will also show that the two results have partially complementary applications.

\subsection{Dehn functions of subgroups of products of free groups}
Our first application is to coabelian subgroups of direct products of free groups. 
They provide a natural generalisation of the Stallings--Bieri groups. With the exception of groups that are virtually direct products of free groups, all SPFs have interesting finiteness properties \cite{BriHowMilSho-02, BriHowMilSho-13}. This makes it natural to pose the question how wild the Dehn functions of SPFs can be. It was first raised by Dison:
\begin{question}[{Dison \cite[Question 1]{Dis-08}}]
\label{qn:Dison1}
 Does every finitely presented SPF have a polynomially bounded Dehn function?
\end{question} 
Dison also posed the stronger version of Question \ref{qn:Dison1} whether the class of Dehn functions of SPFs satisfies a uniform polynomial upper bound. While Tessera and the second author gave a negative answer to the uniform version of Dison's question, their examples are not (virtually) coabelian \cite{LloTes-18-II}. This raises the following question:
\begin{question}[{\cite[Question 4]{LloTes-18-II}}]
\label{qn:Dison2}
 Is there a polynomial $p(n)$ such that $\delta_G(n)\preccurlyeq p(n)$ for all coabelian SPFs $G$?
\end{question}
A natural approach to these questions is to develop methods that provide us with good upper bounds on the Dehn functions of SPFs. This approach was pursued by Dison \cite{Dis-08}. Roughly speaking he showed that coabelian SPFs that satisfy strong enough finiteness properties admit polynomially bounded Dehn functions, providing evidence towards a positive answer to Question \ref{qn:Dison1}. Moreover, Dison showed that under even stronger assumptions one can obtain a uniform polynomial bound. 

More precisely, for $m\geq l$ and $r\geq 3$, Dison defines the group $K_m^r(l)$ as kernel of a homomorphism $\phi: F_m^{\times r}=F_m\times \dots \times F_m\to \ZZ^l$ where every factor surjects onto $\ZZ^l$ and proves that $\delta_{K_m^r(l)}(n)\preccurlyeq n^5$ if $2l\leq r$. While a priori there are different homomorphisms with this property, they are the same up to an automorphism of $F_m^{\times r}$; in particular, up to this automorphism, $K_m^r(l)$ does not depend on the choice of homomorphism $\phi$ \cite[Lemma 13.1]{Dis-08}, explaining the notation.

As an application of Theorem \ref{thm:algebraicsquare} we can compute the precise Dehn functions for this family of examples. 
\begin{theorem}
 \label{thm:Dehn-fct-Disons-groups}
 For $r\geq 4$ and $\left\lceil \frac{l}{2}\right\rceil \leq \frac{r}{4}$ the group $K_m^r(l)$ has quadratic Dehn function. 
\end{theorem}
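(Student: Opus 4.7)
The overall strategy is to realise $K_m^r(l)$ as the kernel $K$ produced by Theorem~\ref{thm:algebraicsquare} applied to a suitable regrouping of the $r$ free factors into four blocks. Since $\lceil l/2 \rceil \leq r/4$, one can choose a partition $r = r_1 + r_2 + r_3 + r_4$ with each $r_i \geq \lceil l/2 \rceil$; set $G_i := F_m^{\times r_i}$, so that $F_m^{\times r} \cong G_1 \times G_2 \times G_3 \times G_4$, and let $\phi_i := \phi|_{G_i} \colon G_i \to \ZZ^l$. Each $G_i$ is finitely presented, $\phi(g_1,g_2,g_3,g_4) = \sum_{i=1}^4 \phi_i(g_i)$ by construction, and since every individual $F_m$-factor of $\phi$ surjects onto $\ZZ^l$, so does each $\phi_i$.

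The crucial remaining step is to verify that the four short exact sequences
\begin{equation*}
1 \to N_i \to G_i \xrightarrow{\phi_i} \ZZ^l \to 1
\end{equation*}
all $P$-split for a single common $P$. This is precisely where the numerical hypothesis $\lceil l/2 \rceil \leq r/4$ is needed: each block $G_i$ must contain at least $\lceil l/2 \rceil$ free factors in order to accommodate a local splitting that matches those on the other blocks. My plan is to select inside each $G_i$ a preferred sub-product $F_m^{\times \lceil l/2 \rceil}$ on which $\phi_i$ restricts to a surjection onto $\ZZ^l$, to equip each of these sub-products with a section whose image can be described uniformly from the standard generators of $\ZZ^l$, and to let $P$ record this common local shape. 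I expect this verification, together with the proper formulation of $P$-splitting, to be the main technical obstacle; everything else will be routine bookkeeping.

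Once the $P$-splitting hypothesis is secured, Theorem~\ref{thm:algebraicsquare} yields $f(n) \preccurlyeq \delta_K(n) \preccurlyeq \overline{f}(n)\log(n)$, where $f$ is the Dehn function of $F_m^{\times r}$. For $r \geq 4$ the group $F_m^{\times r}$ is a direct product of at least two free groups (and at least one of them is non-abelian because $m \geq l \geq 1$ together with a trivial case analysis when $m=1$), so the standard calculation gives $f(n) \asymp n^2$. Since $f(n)/n = n$ is trivially superadditive, the moreover-part of Theorem~\ref{thm:algebraicsquare} upgrades this to $\delta_K(n) \asymp n^2$, establishing the desired quadratic Dehn function. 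The numerical hypothesis $\lceil l/2 \rceil \leq r/4$ is calibrated precisely to make the $P$-splitting construction go through, so any relaxation of it would leave a genuine gap at that step.
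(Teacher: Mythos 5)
Your high-level strategy matches the paper's: group the $r$ free factors into four blocks $G_i$ each containing at least $\lceil l/2\rceil$ factors, feed the resulting decomposition into Theorem~\ref{thm:algebraicsquare}, compute $f(n)\asymp n^2$ from Lemma~\ref{lem:Dehndirectproduct}, and invoke the superadditivity of $f(n)/n=n$. (The paper actually routes the proof through the more general Theorem~\ref{thm:Fmcoabelian}, which involves passing to finite index via \cite[Corollary 3.5]{Kuc-14} and \cite[Corollary 5.4]{Llo-17}; for $K_m^r(l)$ specifically that machinery is unnecessary because $\phi$ is already given and surjective on every factor, so your more direct application of Theorem~\ref{thm:algebraicsquare} is reasonable.)

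However, the crux you flag as ``the main technical obstacle'' is left with a plan that, as written, would fail. You propose to ``select inside each $G_i$ a preferred sub-product $F_m^{\times\lceil l/2\rceil}$ on which $\phi_i$ restricts to a surjection onto $\ZZ^l$'' and ``equip each of these sub-products with a section.'' No section $\ZZ^l\to F_m^{\times\lceil l/2\rceil}$ can exist once $l\geq 2$: a section requires $l$ pairwise commuting elements whose images under $\phi_i$ are linearly independent in $\ZZ^l$, and since centralizers of nontrivial elements in $F_m$ are cyclic, $F_m^{\times k}$ contains no free abelian subgroup of rank exceeding $k$ mapping injectively to $\ZZ^l$. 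So $\lceil l/2\rceil$ commuting factors cannot support a full section of $\ZZ^l$ whenever $l>\lceil l/2\rceil$. Also, ``let $P$ record this common local shape'' misreads Definition~\ref{def:Psplit}: $P$ is nothing more than a choice of direct-sum decomposition $\ZZ^l=B_1\oplus B_2$, fixed once for all four blocks.

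The point you are missing is precisely the asymmetry that makes $P$-splitting strictly weaker than splitting. Fix \emph{one} factoring $P=\{B_1,B_2\}$ of $\ZZ^l$ with $\operatorname{rk} B_1=\lceil l/2\rceil$ and $\operatorname{rk} B_2=\lfloor l/2\rfloor$. In each block $G_i=F_m^{\times r_i}$ with $r_i\geq\lceil l/2\rceil$, pick a basis $b_1,\dots,b_{k_1}$ of $B_1$ and, for each $j$, an element $g_j$ lying in the $j$-th $F_m$-factor of $G_i$ with $\phi_i(g_j)=b_j$; this is possible because every individual $F_m$-factor surjects onto $\ZZ^l$. The $g_j$ commute since they live in distinct direct factors, so $b_j\mapsto g_j$ extends to a homomorphic section $s_i^1\colon B_1\to G_i$ with $\phi_i\circ s_i^1=\mathrm{Id}$. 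Do the same for $B_2$ to obtain $s_i^2\colon B_2\to G_i$. Crucially, $s_i^1$ and $s_i^2$ need not be compatible and may reuse the same $F_m$-factors; Definition~\ref{def:Psplit} imposes no coherence between the two maps, which is exactly why $\lceil l/2\rceil$ factors per block (rather than $l$) suffice and why the bound $\lceil l/2\rceil\leq r/4$ is the right numerology. With this in place, the remainder of your argument goes through as you describe.
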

Note that this result is optimal in $r$ for the subfamily $K_2^r(2)$, $r\geq 3$, since $K_2^3(2)$ satisfies a cubic lower bound on its Dehn function \cite{Dis-09}. Theorem \ref{thm:Dehn-fct-Disons-groups} will follow from the computation of the precise Dehn function for a more general class of coabelian SPFs (see Theorem \ref{thm:Fmcoabelian}). 

While our results do not extend the family of SPFs for which we know that their Dehn function satisfies a uniform polynomial upper bound, they do suggest that the current upper bounds on Dehn functions of SPFs in the literature might still be far from optimal. This provides further impetus to trying to improve these bounds and thereby to making further progress on Questions \ref{qn:Dison1} and \ref{qn:Dison2}. Moreover, our result also significantly extends the class of SPFs for which we know their precise Dehn function.

\subsection{Groups with interesting finiteness properties and prescribed Dehn function}
We call an infinite group $G$ \emph{irreducible}, if it does not have a finite index subgroup of the form $H_1\times H_2$ with $H_1$ and $H_2$ infinite.

In \cite[Corollary 1.1]{CarFor-17} Carter and Forester observe that a consequence of their results is the existence of groups of type $\mathcal{F}_{n-1}$ and not $\mathcal{F}_n$ with quadratic Dehn function. Finiteness properties are preserved when taking free products and direct products with groups of type $\mathcal{F}_{\infty}$. Putting these two results together shows that for every function $f(n)\succcurlyeq n^2$ which can be realised as Dehn function of a group $G$ of type $\mathcal{F}_{\infty}$ there is a group of type $\mathcal{F}_{n-1}$ and not $\mathcal{F}_n$ with the same Dehn function. Indeed, the direct product $G\times \mathrm{SB}_n$ has Dehn function $f(n)$. We also note that the free product $G\ast \mathrm{SB}_n$ has Dehn function $\overline{f}(n)$. Observe that the groups obtained via direct products are 1-ended, but not irreducible, while the groups obtained via free products are irreducible, but not 1-ended.

As a consequence one obtains that for non-hyperbolic groups Dehn functions do not impose any restrictions on their finiteness properties. A priori this might change if one imposes additional group theoretic constraints. Here we show: 
\begin{restatable*}{theorem}{Dehnfinprops}\label{thm:Dehn-fin-props}
Let $f(n)\succcurlyeq n^2$ be a function which can be realised as Dehn function of a group $G$ of type $\mathcal{F}_{\infty}$ and let $k\geq 3$. Then there is a 1-ended irreducible group $K$ of type $\mathcal{F}_{k-1}$ and not $\mathcal{F}_k$ whose Dehn function satisfies $\overline{f}(n)\preccurlyeq \delta_K(n)\preccurlyeq log(n)\cdot \overline{f}(n)$ for $\overline{f}$ the superadditive closure of $f$.
If, moreover, $\frac{f(n)}{n}$ is superadditive then $f(n)\asymp \delta_K(n)$.
\end{restatable*}
It is worth recalling here that to our knowledge all known Dehn functions with $f(n)\succcurlyeq n^2$ satisfy that $\frac{f(n)}{n}$ is superadditive and can be realised as Dehn function of a group of type $\mathcal{F}_{\infty}$. Thus, in all known cases the assumption that the group is simultaneously 1-ended and irreducible does not impose any additional constraints.

\subsection*{Notation} For monotonely increasing functions $f,g: \RR_{>0}\to \RR$ we write $f\preccurlyeq g$ if there is a constant $C>0$ such that $f(n)\leq Cg(Cn + C)+Cn+C$ for all $n\in \RR_{>0}$ and $f\asymp g$ if $f\preccurlyeq g \preccurlyeq f$. If $f\asymp g$ we say that $f$ and $g$ are asymptotically equivalent.

Throughout this article by a word in a set $X$ we will mean an element of the free monoid with generating set $X\sqcup X^{-1}$. For a group $G$ and a subset $X\subset G$, we say that $h\in G$ is obtained from $g\in G$ by applying elements of $X$ to $g$ if there is a word $w(X)$ in $X$ such that $h=g\cdot w(X)$ in $G$; equivalently $h\in g\cdot \langle X\rangle$.

For a word $w(X)=x_1\cdot \dots \cdot x_n$ with $x_i\in X\sqcup X^{-1}$ we will denote by $|w|:=n$ its \emph{word length}. Moreover, for  group elements $g, h\in G=\left\langle X\right\rangle$ we will denote by $d(g,h)(=d_{G,X}(g,h))$ their distance in the \emph{word metric} on the Cayley graph $\mathrm{Cay}(G,X)$ of $G$ with respect to the generating set $X$.

\subsection*{Structure} In Section \ref{sec:Background} we provide some background on Dehn functions, superadditivity and filling pairs. In Section \ref{sec:trianglemethod} we prove Theorem \ref{thm:algebraictriangle}. In Section \ref{sec:square} we prove Theorem \ref{thm:algebraicsquare}. In Section \ref{sec:SPFs-with-quadratic-Dehn} we apply Theorem \ref{thm:algebraicsquare} to study Dehn functions of SPFs and prove Theorem \ref{thm:Dehn-fct-Disons-groups}. The proof of Theorem \ref{thm:Dehn-fin-props} is contained in Section \ref{sec:finprops-Dehn-fcts}. Finally, in Section \ref{sec:raags}, we will provide some straight-forward applications of our results to Dehn functions of coabelian subgroups of right-angled Artin groups. 

\subsection*{Acknowledgements}
The authors would like to thank Rob Merrell and an anonymous referee for their helpful comments and suggestions.

\section{Dehn functions, superadditivity and filling pairs}
\label{sec:Background}

\begin{definition}
	Let $G$ be a group given by a finite presentation $\langle X\mid R\rangle$. 
	Let $w$ be a word in $X$ which represents the trivial element of $G$. 
	We define the {\em area} of $w$ as: 
	$$\Area(w) = \min\{l\mid w = \prod_{i=1}^l w_ir_iw_i^{-1}, w_i\in F(X), r_i\in R^{\pm1}\}.$$
	We define the {\em Dehn function} of $G$, $\delta_G:\NN \to \NN$ by
	$$\delta_G(n) = \max\{\Area(w) \mid |w|\leq n\}.$$
\end{definition}

A priori the definition depends on the choice of finite presentation for $G$. However, it is well-known that its asymptotic equivalence class does not. This explains the omission of the finite presentation in the notation $\delta_G$ for the Dehn function.

Recall the following well-known result (see \cite{Bric-93}). 
\begin{lemma}\label{lem:retraction}
	Let $G, H$ be finitely presented groups. 
	Suppose that $G\to H$ is a retraction. 
	Then $\delta_H\preccurlyeq \delta_G$. 
\end{lemma}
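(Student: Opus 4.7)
The plan is to exploit the retraction $r\colon G\to H$ (with section $\iota\colon H\hookrightarrow G$, $r\circ\iota=\mathrm{id}_H$) to transport fillings in $G$ to fillings in $H$, losing only a multiplicative constant. The main set-up step is to pick \emph{compatible} finite presentations on both sides: start with any finite presentation $\langle Y\mid S\rangle$ of $H$, identify $Y$ with $\iota(Y)\subseteq G$, and extend to a finite presentation $\langle X\mid R\rangle$ of $G$ with $X\supseteq Y$ and $R\supseteq S$. This is always possible because $G$ is finitely presented and adding extra generators and relations does not change asymptotic equivalence of the Dehn function.

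Given a word $w$ in $Y^{\pm 1}$ of length at most $n$ representing the trivial element of $H$, the inclusion $\iota$ shows that $w$ also represents the trivial element of $G$. Hence there is an expression
\[
w \;=\; \prod_{i=1}^{k} u_i\, r_i\, u_i^{-1} \qquad \text{in } F(X),
\]
with $r_i\in R^{\pm 1}$, $u_i\in F(X)$, and $k\leq \delta_G(n)$. Next I would extend $r$ to a monoid homomorphism on words by sending each generator $x\in X$ to a fixed word $r(x)$ in $Y^{\pm 1}$; since $r$ fixes $Y$ pointwise (as $r\circ\iota=\mathrm{id}_H$), applying $r$ to the displayed equality yields
\[
w \;=\; \prod_{i=1}^{k} r(u_i)\, r(r_i)\, r(u_i)^{-1} \qquad \text{in } F(Y).
\]

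Now each word $r(r_i)$ is a word in $Y^{\pm 1}$ whose length is bounded by $M:=\max_{r\in R,\,x\in X}|r|\cdot |r(x)|$, and it represents the trivial element of $H$ because $r$ is a group homomorphism. Therefore $\mathrm{Area}_H(r(r_i))\leq \delta_H(M)=:C$, a constant depending only on the chosen presentations. Summing over $i$ gives $\mathrm{Area}_H(w)\leq C\cdot k\leq C\cdot \delta_G(n)$, which is exactly the desired inequality $\delta_H\preccurlyeq \delta_G$.

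I do not anticipate a genuine obstacle; the only mildly delicate point is choosing the extended presentation of $G$ so that $S\subseteq R$ and $Y\subseteq X$, since this both makes $w$ trivially a word over the generators of $G$ and forces $r$ to restrict to the identity on $Y$. Once that is arranged, the rest is a straightforward pushforward of the filling through $r$.
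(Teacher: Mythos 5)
Your proof is correct and is essentially the standard argument for this well-known fact. The paper does not give its own proof of this lemma but simply cites Brick's 1993 paper; your approach --- lift the null-homotopic loop from $H$ into $G$ along the section, fill it in $G$ with at most $\delta_G(n)$ relators, then push the filling back to $H$ via a word-level lift of the retraction (fixing $Y$ pointwise), replacing each $G$-relator by a constant-area filling in $H$ --- is the classical argument one finds in that reference.
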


Throughout we will be interested in Dehn functions of products of groups. They are described by the following

\begin{lemma}
\label{lem:Dehndirectproduct}
	For $k\geq 2$ let $G_1, \dots, G_k$ be infinite finitely presented groups and denote by $\delta_i$ the Dehn function of $G_i$. 
	Then the Dehn function of the product $G = \prod_i G_i$ is equivalent to $\max\left\{n^2, \delta_i, 1\leq i \leq k\right\}$. 
\end{lemma}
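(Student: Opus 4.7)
The plan is to establish the upper and lower bounds separately, and combine them.

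For the upper bound, I would work with the standard finite presentation of $G=\prod_i G_i$ obtained as the disjoint union of finite presentations $\langle X_i\mid R_i\rangle$ of the factors, augmented with commutator relators $[x,y]=1$ for every pair $x\in X_a$, $y\in X_b$ with $a\neq b$. Given a word $w$ of length $n$ representing the identity in $G$, I would sort its letters so that all letters from the same factor are grouped together. Each adjacent transposition of letters from different factors costs one commutator relation, and sorting a word of length $n$ requires at most $\binom{n}{2}$ transpositions. After sorting, $w$ is rewritten (modulo at most $n^2$ relators) as a concatenation $w_1w_2\cdots w_k$ with $w_i$ a word in $X_i$ of length at most $n$. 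Projecting to each factor shows that every $w_i$ represents the trivial element of $G_i$, so each can be filled with area at most $\delta_i(n)$. Summing yields $\Area(w)\leq n^2+\sum_i\delta_i(n)$, which is $\preccurlyeq \max\{n^2,\delta_i(n)\}$.

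For the lower bound I would use two ingredients. First, for each $i$ the projection $G\to G_i$ is a retraction (split by the obvious inclusion), so \Cref{lem:retraction} gives $\delta_{G_i}\preccurlyeq \delta_G$. Second, the quadratic lower bound $n^2\preccurlyeq \delta_G$ for a direct product of two infinite finitely presented groups is classical. Since $G_1$ and $G_2$ are infinite, their Cayley graphs contain geodesic paths of every length, so we can choose words $u_n$ in $X_1$ and $v_n$ in $X_2$, each of length $n$, representing elements at distance $n$ from the identity in the respective factor. The commutator $[u_n,v_n]$ has length $4n$ and is trivial in $G$, but any van Kampen filling over our presentation must account for the $n^2$ ``crossings'' of $X_1$-letters by $X_2$-letters along the boundary; since the only relations involving both kinds of generators are the commutator relations, each of which resolves a single crossing, the area is $\succcurlyeq n^2$.

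The genuinely delicate step is the $n^2$ lower bound: the counting argument above needs to be made precise in a way that does not assume the existence of elements of infinite order in the $G_i$ (which, while it holds in all known cases, is not guaranteed for an arbitrary infinite finitely presented group). The cleanest way I would write this is to define a $\ZZ$-valued function on van Kampen diagrams that counts, with sign, the geometric intersections of horizontal $X_1$-labelled tracks with vertical $X_2$-labelled tracks: this function is invariant under every relation in $R_1\sqcup R_2$ (which involves only one type of generator), changes by $\pm 1$ under each commutator relation, and takes value $n^2$ on the boundary word $[u_n,v_n]$. Combining $n^2\preccurlyeq\delta_G$ with $\delta_{G_i}\preccurlyeq\delta_G$ yields $\max\{n^2,\delta_i\}\preccurlyeq \delta_G$, and together with the upper bound this proves the asymptotic equivalence.
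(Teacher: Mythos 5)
Your upper bound and your reduction $\delta_i\preccurlyeq\delta_G$ via the retractions $G\to G_i$ are both correct, and the upper bound is essentially a reproof of the reference the paper cites (Brick, Proposition 2.1): sort the letters of $w$ by factor at a cost of $O(n^2)$ commutator relators, then fill each of the $k$ resulting one-factor subwords inside the corresponding $G_i$. The paper simply cites this rather than proving it.

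The real divergence, and the place where your argument has a genuine gap, is the quadratic lower bound. The paper disposes of it in one line: a direct product of two infinite finitely generated groups is never hyperbolic (for instance, an infinite hyperbolic group contains an infinite-order element $g$, and after relabelling $g$ may be taken in one factor, whence its centraliser contains $\langle g\rangle\times G_2$ and is not virtually cyclic); combined with the gap theorem of Gromov/Ol'shanskii/Papasoglu/Bowditch, which says a finitely presented group with subquadratic Dehn function has linear Dehn function and is hyperbolic, this immediately gives $\delta_G\succcurlyeq n^2$. Your proposed direct argument via a $\ZZ$-valued ``signed crossing'' invariant on van Kampen diagrams is not actually constructed, and the construction is not routine. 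The tracks you describe are well-defined inside commutator $2$-cells (an $x$-side is paired with an $x^{-1}$-side, and a $y$-side with a $y^{-1}$-side), but they do not propagate through an $R_1$-cell or an $R_2$-cell: a relator $r\in R_1$ can have odd length, or more generally there is no canonical pairing of the $X_1$-edges on the boundary of an $R_1$-cell, so ``the track entering this cell'' has nowhere to go. Consequently the claimed invariance of your function under relations in $R_1\sqcup R_2$ is not established, and without that invariance the equality ``area $\geq$ value on the boundary word'' does not follow. You flag this as the delicate step, correctly; but the fix you sketch does not close the gap. One honest way to rescue a hands-on argument is to build a Lipschitz map of the filling disc to $\RR^2$ from the functions $g\mapsto d_{G_i}(1,g)$ on the two factors and compare areas, but this is considerably more work than the non-hyperbolicity argument the paper uses, which is both shorter and more robust.
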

\begin{proof}
	There are retractions $G\to G_i$ for each $i$, thus $\delta_G$ is bounded below by $\delta_i$. 
	Since the $G_i$ are infinite, $G$ is not hyperbolic and thus $\delta_G$ is also bounded below by $n^2$. For the upper bound see \cite[Proposition 2.1]{Bric-93}. 
\end{proof}

Various of the arguments used will require superadditivity of functions: 
\begin{definition}
	A function $f\colon \NN\to \RR_{>0}$ is called {\em superadditive} if $f(n+m)\geq f(m) + f(n)$. 
	The {\em superadditive closure} $\overline{f}$ of $f$ is the smallest superadditive function with $\overline{f}(n)\geq f(n)$. 
\end{definition}
It is currently an open conjecture that every Dehn function is equivalent to its superadditive closure \cite[Conjecture 1]{GubSap-99}. The following straight-forward observation provides evidence:
\begin{lemma}\label{lem:Quotient-condition-superadd}
	Let $f\colon \NN \to \RR_{>0}$ be a function that satisfies that $\frac{f(n)}{n}$ is non-decreasing in $n$. Then $f$ is superadditive.
\end{lemma}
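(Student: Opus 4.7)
The plan is to exploit the hypothesis $f(n)/n \leq f(n+m)/(n+m)$ termwise after decomposing $f(n+m)$ proportionally. Concretely, for any $n,m \in \NN$ I would write the trivial identity
\[
f(n+m) \;=\; n \cdot \frac{f(n+m)}{n+m} \;+\; m \cdot \frac{f(n+m)}{n+m}.
\]
Since $n \leq n+m$ and $m \leq n+m$, the hypothesis that $k \mapsto f(k)/k$ is non-decreasing gives
\[
\frac{f(n)}{n} \;\leq\; \frac{f(n+m)}{n+m} \qquad \text{and} \qquad \frac{f(m)}{m} \;\leq\; \frac{f(n+m)}{n+m}.
\]
Multiplying the first inequality by $n$ and the second by $m$ yields $f(n) \leq n \cdot f(n+m)/(n+m)$ and $f(m) \leq m \cdot f(n+m)/(n+m)$. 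Substituting into the displayed identity gives $f(n+m) \geq f(n) + f(m)$, which is superadditivity.

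The main (and essentially only) obstacle is a minor one of convention: one must ensure that the quotient $f(n)/n$ is actually defined for the values of $n$ being considered, i.e.\ that $n > 0$. Since superadditivity is a statement about $n,m$ with $f(n)+f(m)$ meaningful, and the paper's convention for Dehn-type functions is $f : \NN_{>0} \to \RR_{>0}$ (as indicated in the statement of Theorem \ref{thm:algebraictriangle}), this is not a genuine issue; otherwise one would simply read the hypothesis as implicitly asserting $n \geq 1$. No further machinery is needed, and the proof is a two-line calculation.
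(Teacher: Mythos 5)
Your proof is correct, and it is the standard (and essentially unique) short argument: split $f(n+m)$ proportionally as $n\cdot\frac{f(n+m)}{n+m} + m\cdot\frac{f(n+m)}{n+m}$ and bound each term below using monotonicity of $f(k)/k$. The paper itself gives no proof of this lemma, stating it as a ``straight-forward observation,'' so there is nothing to compare against; your two-line calculation is exactly what the authors have in mind. Your remark about restricting to $n\geq 1$ so that the quotient is defined is the right reading of the paper's conventions and does not affect the substance.
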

Here we will be interested in a stronger condition than superadditivity. Namely, we will be interested in non-decreasing functions $f\colon \NN \to \RR_{>0}$ with the property that $n\mapsto \frac{f(n)}{n}$ is superadditive. Note that neither superadditivity nor this property need to be preserved by asymptotic equivalence of functions. However, we have:
\begin{lemma}\label{lem:nice-representatives}
Let $f:\NN \to \RR_{>0}$ be non-decreasing such that $f\asymp g\succcurlyeq n^2$ for a function $g:\NN \to \RR_{>0}$ with $g'(n):=\frac{g(n)}{n}$ superadditive. Let $\overline{f}'(n)$ be the superadditive closure of $f'(n):= \frac{f(n)}{n}$ and let $\widehat{f}(n):= n\cdot \overline{f}'(n)$. Then $\widehat{f}\asymp g$ and $\widehat{f}\geq f$.
\end{lemma}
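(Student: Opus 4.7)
The pointwise inequality $\widehat{f}\geq f$ is immediate: since $\overline{f}'\geq f'$, multiplying by $n$ gives $\widehat{f}(n)=n\overline{f}'(n)\geq nf'(n)=f(n)$. For the asymptotic equivalence $\widehat{f}\asymp g$, one direction is just the chain $g\asymp f\leq\widehat{f}$, which gives $g\preccurlyeq\widehat{f}$.

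The real content is the reverse inequality $\widehat{f}\preccurlyeq g$. My plan is to exploit the explicit formula for the superadditive closure,
\[
\overline{f}'(n)=\sup\Bigl\{\sum_{i=1}^{k}f'(n_i)\ :\ k\geq 1,\ n_i\geq 1,\ \textstyle\sum n_i=n\Bigr\},
\]
which follows from its universal property (any superadditive function dominating $f'$ dominates every such partition sum). First I would fix $C$ with $f(n)\leq Cg(Cn+C)+Cn+C$ and, dividing by $n$, extract a pointwise bound of the shape $f'(n)\leq 2C^{2}g'(Cn+C)+2C$ for $n\geq 1$. The decisive step is then to sum this over a decomposition $n=n_{1}+\cdots+n_{k}$ and to use the superadditivity of $g'$ to collapse the $g'$-values: $\sum_{i}g'(Cn_{i}+C)\leq g'\bigl(\sum_{i}(Cn_{i}+C)\bigr)=g'(Cn+Ck)\leq g'(2Cn)$, where the last inequality uses $k\leq n$ together with the fact that $g'$ is non-decreasing. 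The latter follows from superadditivity combined with $g'(1)>0$, since $g'(n+1)\geq g'(n)+g'(1)>g'(n)$. Taking the supremum yields $\overline{f}'(n)\leq 2C^{2}g'(2Cn)+2Cn$, and multiplying by $n$ and using $n\cdot g'(2Cn)=g(2Cn)/(2C)$ converts this to $\widehat{f}(n)\leq Cg(2Cn)+2Cn^{2}$. The quadratic error is absorbed by the hypothesis $g\succcurlyeq n^{2}$, delivering $\widehat{f}\preccurlyeq g$.

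I expect the conceptual heart of the argument to be the single moment where superadditivity of $g'$ is applied to the decomposition-based description of $\overline{f'}$: this is what turns a sum indexed by an arbitrary partition of $n$ into a single value of $g'$, and it is precisely why the hypothesis is naturally imposed on $g'$ rather than on $g$ itself. Everything else is bookkeeping of the multiplicative and additive constants built into the definition of $\preccurlyeq$, with the assumption $g\succcurlyeq n^{2}$ only invoked at the very end to swallow the linear remainder coming from the $+Cn+C$ in the definition of $\preccurlyeq$.
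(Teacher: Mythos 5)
Your proof is correct and follows essentially the same strategy as the paper's: fix a partition realizing (or witnessing a lower bound for) $\overline{f}'(n)$, bound each $f'(n_i)$ via the $\preccurlyeq$ inequality $f\preccurlyeq g$, rewrite $g$ as $n\cdot g'$, collapse the resulting sum of $g'$-values to a single one via superadditivity of $g'$, bound $k\leq n$, and absorb the leftover quadratic term using $g\succcurlyeq n^2$. The only cosmetic difference is that you first extract a clean pointwise bound on $f'$ before summing, whereas the paper carries the $Cn_i+C$ terms through the sum; this changes nothing essential.
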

\begin{proof}
By definition $\widehat{f}(n)\geq f(n)$ and thus $\widehat{f}\succcurlyeq g$. Conversely, $g\succcurlyeq f$ implies that there is $C\geq 1$ such that $f(n)\leq Cg(Cn+C)+Cn+C$ for all $n\in \NN$. Let $n\in \NN$ and let $n=n_1+\dots + n_k$ be a partition realising $\overline{f}'(n)$. Then:
\begin{align*}
\widehat{f}(n) = n \cdot \sum_{i=1}^k f'(n_i)&\leq n\cdot \sum_{i=1}^k \frac{C g(Cn_i+C)+Cn_i+C}{n_i}\\
&= n\cdot \sum_{i=1}^k \frac{C (Cn_i+C) g'(Cn_i+C)+Cn_i+C}{n_i}\\
&\leq C\cdot n \cdot \left(\sum_{i=1}^k 2C g'(C n_i+C)\right) + n \cdot (2 C k)\\
&\leq 2C^2\cdot n \cdot g'(C(n+k)) + n \cdot (2 C k)\\
&\leq 2C g(n+k) + 2C (n+k)^2\\
&\leq 2C g(2n)+2C (2n)^2.
\end{align*}
Since $g\succcurlyeq n^2$, this completes the proof.
\end{proof}
The following is an immediate consequence of the observation that superadditivity of $f(n)/n$ implies superadditivity of $f$.
\begin{remark}\label{rem:superadditivity}
Let $f:\NN\to \RR_{>0}$ be a non-decreasing function. Let $\overline{f}$ be its superadditive closure and let $g(n):=n\cdot \overline{f}'(n)$ for $\overline{f}'(n)$ the superadditive closure of $f(n)/n$. Then $f\leq \overline{f}\leq g$.
\end{remark}

Our methods will also provide upper bounds on the filling diameters of our fillings and thus provide us with filling pairs. Here we summarize the properties of filling diameters and pairs that we will require.

We define the \textit{(extrinsic) filling diameter} of a filling $w(X) = \prod_{i=1}^n v_i(X)\cdot r_i^{\pm 1}\cdot v_i(X)^{-1}$ of a null-homotopic word $w(X)$ in $G=\langle X\mid R\rangle$ as the maximal distance $\max_{1\leq i\leq n}\left\{d_G(1,v_i(X))  \right\}$ of the conjugators from $1\in G$ in $Cay(G,X)$. 

For functions $f,g: \NN \to \RR>0$, we call $(f,g)$ an \textit{(extrinsic) filling pair} for $G$ if every null-homotopic word $w(X)$ of length $\leq n$ admits a filling of area $\leq f(n)$ and diameter $\leq g(n)$. The following is well-known and it follows easily from the proof of the upper bound on the Dehn function of a direct product of groups given in \cite{Bric-93}.
\begin{lemma}\label{lem:filling-pair-product}
 Let $G_1,\dots,G_k$ be finitely presented groups with filling pairs $\left(f_i,g_i\right)$. Then $$(f(n),g(n)):=\left(n^2+\max_{1\leq i\leq k}\{ f_i(n)\},n+\max_{1\leq i\leq k} \{g_i(n)\}\right)$$ is a filling pair for $\prod_{i=1}^k G_i$.
\end{lemma}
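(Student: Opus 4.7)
The plan is to imitate the standard proof that the Dehn function of a direct product is bounded by $n^2 + \max_i \delta_i(n)$, while simultaneously tracking the diameter of the conjugators used. Fix finite presentations $G_i = \langle X_i \mid R_i\rangle$ realising the filling pairs $(f_i,g_i)$ and present $G = \prod_{i=1}^k G_i$ on the generating set $X = \bigsqcup_i X_i$ with relators $R = \bigsqcup_i R_i \cup \{[x,y] : x\in X_i,\, y\in X_j,\, i\neq j\}$. Let $w$ be a null-homotopic word in $X$ of length at most $n$.

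The first step is to reorder the letters of $w$ using the commutation relators so that all letters of $X_1$ come first, those of $X_2$ next, and so on, producing a word $w_1w_2\cdots w_k$ in which each $w_i$ is a subword in $X_i$. A bubble-sort procedure accomplishes this in at most $\binom{n}{2}$ commutation swaps; each swap is realised by one conjugate of a commutator relator. Because commutation preserves word length, every intermediate word still has length $n$, so each swap's conjugating prefix has length at most $n$. This stage contributes area $O(n^2)$ to the filling and keeps all its conjugators at distance at most $n$ from the identity.

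Projecting $w$ to each factor $G_i$ shows that $w_i$ is null-homotopic in $G_i$. I would then fill each $w_i$ using the filling pair $(f_i,g_i)$ for $G_i$, obtaining area at most $f_i(|w_i|)\leq f_i(n)$ and diameter at most $g_i(|w_i|)\leq g_i(n)$ measured from the basepoint $p_i$ at which $w_i$ begins inside $G$. Since $p_i$ is reached by following the preceding subwords $w_1\cdots w_{i-1}$, whose total length is at most $n$, we have $d(1,p_i)\leq n$, and so the conjugators arising from this filling lie at distance at most $n+g_i(n)$ from $1$ in $\mathrm{Cay}(G,X)$.

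Combining the two stages produces a filling of $w$ of total area at most $O(n^2) + \sum_{i=1}^k f_i(n)$, which is asymptotic to $n^2 + \max_i f_i(n)$ since $k$ is a fixed constant, and of filling diameter at most $n + \max_i g_i(n)$, as required. The only slightly subtle point is the diameter bound for the sorting stage, and it is handled by the observation that commutation relations do not change word length; everything else is bookkeeping.
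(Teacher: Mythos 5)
Your proof is correct and follows the standard bubble-sort argument for direct products, which is exactly what the paper invokes (it cites the argument as "well-known and analogous to the proof of the upper bound in Lemma \ref{lem:Dehndirectproduct}", referring to Brick's proposition). One small remark: in the filling stage you do not actually need to prepend $w_1\cdots w_{i-1}$ to the conjugators — writing $w_1\cdots w_k=\prod_{i}\bigl(\prod_j u_j^{(i)} r_j^{(i)} (u_j^{(i)})^{-1}\bigr)$ directly in the free group gives conjugators $u_j^{(i)}$ at distance at most $g_i(n)$ from the identity, so the diameter of that stage is $\max_i g_i(n)$ rather than $n+\max_i g_i(n)$; your estimate is merely a harmless overcount that still lands within the claimed bound.
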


\begin{remark}
One can also define the \emph{intrinsic filling diameter} of a filling of a null-homotopic word, as the diameter of a corresponding van Kampen diagram, and correspondingly define intrinsic filling pairs. Using that in \cite{Bric-93} Brick constructs explicit van Kampen diagrams satisfying the asserted upper area bounds, one observes that Lemma \ref{lem:filling-pair-product} also holds for intrinsic filling pairs. In particular, one readily checks that all of our results about extrinsic filling pairs remain true for intrinsic filling pairs by carefully going through their proofs and observing that the fillings we construct are obtained by gluing together van Kampen diagrams whose distance to the base point is bounded by a uniform multiple of the length of the word we fill. However, note that in general the functions $EDiam(n)$, resp. $IDiam(n)$, defined as the maximum over all minimal extrinsic, resp. intrinsic, filling diameters of null-homotopic words of length $\leq n$ are not equivalent \cite{BriRil-09, BRS-07}. For simplicity in the sequel we will thus stick to extrinsic filling diameters and pairs. In particular, the terms ``filling diameter'' and ``filling pair'' will always be referring to their extrinsic version.
\end{remark}

\section{Algebraic triangle method}
\label{sec:trianglemethod}

In \cite{CarFor-17}, a method is developed to study the Dehn function of a kernel of a homomorphism $G_1\times G_2\times G_3\to \ZZ$. 
For three cube complexes $X_i$ equipped with height functions $h_i\colon X_i\to \RR$, that is, functions that restrict to linear maps on cubes and map edges onto intervals of the form $\left[n,n+1\right]$, it provides an upper bound on the Dehn function of the zero-level set of the sum of the $h_i$.
\begin{theorem}[{\cite[Theorem 4.2]{CarFor-17}}]
\label{thm:Carter-Forester}
	 Suppose $a\geq 2$ and let $X_1, X_2$ and $X_3$ be simply connected cube complexes with height functions $h_i:X_i\to \RR$ such that each $X_i$ is admissible and has Dehn function $\preccurlyeq n^a$. 
	 Then the zero level set $[X_1 \times X_2 \times X_3]_0$ of $h=\sum_{i=1}^3 h_i$ is simply connected and has Dehn function $\preccurlyeq n^a$.
\end{theorem}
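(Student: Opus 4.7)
The plan is a two-stage filling argument. Given a loop $\gamma$ of length $n$ in $[X_1\times X_2 \times X_3]_0$, I would first fill $\gamma$ in the ambient product $X := X_1 \times X_2 \times X_3$, and then deform the filling cell-by-cell onto the zero level set while controlling the extra area with a uniform filling lemma for elementary level-set loops. Since each $X_i$ has Dehn function $\preccurlyeq n^a$ with $a \geq 2$, Lemma \ref{lem:Dehndirectproduct} gives that $X$ itself has Dehn function $\preccurlyeq n^a$, so there is a van Kampen diagram $D$ for $\gamma$ in $X$ of area $A \lesssim n^a$.

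The bulk of the work is then to replace $D$ by a diagram $D'$ supported on $[X]_0$ at a uniform multiplicative cost in area. Here I would use admissibility of the $X_i$: each height function $h_i$ is affine on cubes and sends vertices to integers, so the level sets subdivide each cube into pieces on which $h_i$ is either constant or has controlled slope, inducing a cell structure on $[X]_0$. I would traverse $D$ cell by cell; for each $2$-cell $c$, its boundary is replaced by a curve on $[X]_0$ obtained by correcting the heights of the vertices using the other two factors, so that consecutive vertices lie on the zero level set. Choosing these corrections consistently along shared edges of $D$ causes them to cancel in pairs, and what remains is a collection of \emph{elementary} level-set loops, one supported near each cell of $D$ and each of uniformly bounded length.

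The key technical input is a constant-area filling lemma for these elementary loops: any loop in $[X]_0$ built from a bounded-length path in some $X_i$ at bounded height together with two short compensating paths in the other two factors can be filled by a bounded number of $2$-cells of $[X]_0$. The availability of the third factor is essential, since it allows one to neutralise the residual height in the equation $h_1+h_2+h_3 = 0$ without having to travel long distances in any single factor. Summing these uniform fills over the $A \lesssim n^a$ cells of $D$ yields a filling of $\gamma$ in $[X]_0$ of area $\preccurlyeq n^a$, which simultaneously establishes simple connectivity of $[X]_0$ and the Dehn function bound.

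The main obstacle is the bookkeeping in the deformation step: the corrections chosen on adjacent $2$-cells of $D$ must agree along common edges so that the local fills assemble into a bona fide disk bounding $\gamma$, and the per-cell overhead must remain bounded independently of how far the vertices of $D$ drift from the level set. I would address both by defining the corrections through canonical \emph{straightening} paths on $[X]_0$ that depend only on the endpoint heights, making the matching automatic, and by invoking the elementary filling lemma only for loops whose local height data lies in a uniformly bounded range, which admissibility guarantees. The three-factor hypothesis is used crucially here, as a two-factor version of the argument would lack the extra coordinate needed to absorb residual height in constant area.
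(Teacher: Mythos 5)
Your strategy---fill $\gamma$ in the ambient product $X$ and then deform the filling cell-by-cell onto $[X]_0$---runs into exactly the obstacle you flag at the end but do not actually resolve: a van Kampen diagram $D$ for $\gamma$ in $X$ has no reason to stay close to the zero level set. Its vertices can reach heights of order $n$ (a disc of boundary length $n$ can have intrinsic diameter of order $n$, and $h$ only has to vanish on the boundary, not on the interior of a filling). Pushing such a vertex to $[X]_0$ along a monotone line costs a path of length comparable to its height, so the ``elementary loops'' you extract, one per cell of $D$, have length of order $n$, not uniformly bounded length, and the constant-area filling lemma does not apply to them. Admissibility supplies the monotone lines along which to push; it does not constrain the height of the filling disc in $X$. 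To rescue the argument you would either have to show that $X$ admits fillings of area $\preccurlyeq n^a$ staying at uniformly bounded height---which is essentially the statement you are trying to prove---or push the diagram down one height level at a time, each pass costing roughly the area of $D$; since one may need $\Theta(n)$ passes this yields $n\cdot n^a$, which recovers Bridson's weaker bound \cite{Bri-01}, not the Carter--Forester bound.

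The proof in \cite{CarFor-17}, and the algebraic translation of it given in Section~\ref{sec:trianglemethod}, avoids this issue by never leaving the level set. Given three points of $[X]_0$ one builds a ``spanning triangle'' lying entirely in $[X]_0$, subdivided into regions each of which sits in a subcomplex (algebraically, a subgroup of the form $G_i\times G_j$, $G_i\times\ZZ^m$, or $\ZZ^{2m}$ inside $K$) where fillings are available, and each region has perimeter controlled linearly by the pairwise distances of the three corners. A loop of length $n$ is then tiled dyadically by roughly $n$ such triangles whose perimeters halve with depth, and superadditivity of $f(n)/n$ makes the sum of their areas collapse to $O(f(n))$. The third factor is indeed essential in both arguments, but for a different reason than in your sketch: it is what makes the subgroups labelling the regions of the spanning triangle exist inside $K$, rather than providing room to absorb residual height after a deformation from the ambient space.
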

Here a cube complex $X_i$ is {\em admissible} if each vertex is contained in a monotone line, where a {\em monotone line} is a subcomplex $L_i$ of $X_i$ such that $h_i|_{L_i}$ is a homeomorphism.  
The proof in \cite{CarFor-17} exploits that the cubical structure on the factors induces a sliced cell structure on certain level sets, enabling the authors to derive upper bounds on the Dehn function via area estimates with respect to a cellular structure.

The first goal of this work is a generalisation of this result. A natural strategy for obtaining such an extension is to generalise the notion of height functions. A first step in this direction would be to consider maps $h_i: X_i\to \RR^m$ which restrict to linear maps on cubes, map vertices of $X_i$ into the integer lattice of $\ZZ^m$ and are Lipschitz. This approach has the advantage that one can still exploit the combinatorial structure of the cube complex by inducing a generalised version of the sliced cell-structures used in \cite{CarFor-17} on the level sets of $h$. However, it turns out that one can evade the use of a combinatorial structure altogether and thereby even drop the condition that the $X_i$ are cube complexes. 
Geometrically one can do so by extending the definition of height function to Lipschitz maps $h_i\colon X_i\to \RR^m$ for some $n$, where the $X_i$ are suitable locally compact and locally connected length spaces (for instance direct products of Cayley graphs of finitely presented groups). 
One can then define monotone planes to be subspaces of the $X_i$ such that $h_i$ restricts to a bilipschitz homeomorphism onto $\RR^m$ and formulate a result similar to Theorem \ref{thm:Carter-Forester} in this setting.

However, we shall pursue a different purely algebraic approach which avoids some of the geometric subtleties that one encounters in the aforementioned approach via Lipschitz height maps.

\algebraictriangle

While the condition that $\frac{f(n)}{n}$ is superadditive might seem restrictive on first sight, we are not aware of any non-hyperbolic group that does not satisfy it. Lemma \ref{lem:Quotient-condition-superadd} implies:
\begin{corollary}\label{cor:triangle-thm-1}
	If in Theorem \ref{thm:algebraictriangle} $\frac{f(n)}{n^2}$ is non-decreasing, then $\ker(\phi)$ is finitely presented and has Dehn function $f$.
\end{corollary}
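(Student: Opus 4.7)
The corollary is a direct deduction from the ``moreover'' clause of Theorem \ref{thm:algebraictriangle}, so my plan is simply to verify the superadditivity hypothesis under the stronger assumption that $f(n)/n^2$ is non-decreasing, and then invoke the theorem.

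First, I would apply Theorem \ref{thm:algebraictriangle} to obtain that $K=\ker(\phi)$ is finitely presented; this part requires no extra work, since the hypotheses of the theorem are assumed verbatim. The task therefore reduces to showing that $f(n)\asymp \delta_K(n)$, which is precisely the conclusion of the moreover-part of Theorem \ref{thm:algebraictriangle} provided we can check that the function $n\mapsto f(n)/n$ is superadditive.

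The key step is to extract superadditivity of $f(n)/n$ from monotonicity of $f(n)/n^2$. The plan is to set $g(n):=f(n)/n$ and observe that the assumption of the corollary says exactly that $g(n)/n = f(n)/n^2$ is non-decreasing in $n$. By Lemma \ref{lem:Quotient-condition-superadd} applied to $g$, this implies that $g$ is superadditive; in other words, $f(n)/n$ is superadditive, which is the hypothesis of the moreover-part of Theorem \ref{thm:algebraictriangle}.

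Putting the two pieces together, Theorem \ref{thm:algebraictriangle} yields $f(n)\asymp \delta_K(n)$, as desired. The only ``obstacle'' — really just a bookkeeping point — is the small algebraic rearrangement that recasts the hypothesis $f(n)/n^2$ non-decreasing as the quotient condition $g(n)/n$ non-decreasing for $g(n)=f(n)/n$, so that Lemma \ref{lem:Quotient-condition-superadd} can be applied. No further estimates or constructions are needed.
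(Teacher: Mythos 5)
Your proposal is correct and matches the paper's intended argument: the corollary is stated immediately after the sentence ``Lemma \ref{lem:Quotient-condition-superadd} implies:'', and the route you describe — applying Lemma \ref{lem:Quotient-condition-superadd} to $g(n)=f(n)/n$ so that non-decreasing $g(n)/n = f(n)/n^2$ yields superadditivity of $f(n)/n$, then invoking the moreover-part of Theorem \ref{thm:algebraictriangle} — is exactly that deduction made explicit.
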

We also obtain the following estimate on filling pairs:
\begin{corollary}\label{cor:triangle-thm-3}
If $(f_i,g_i)$ is a filling pair for $G_i$ then the proof of Theorem \ref{thm:algebraictriangle} shows that $K$ admits a filling pair of the form $( log(n)\cdot \overline{f}(n),g(n))$, with $\overline{f}$ the super-additive closure of $f(n)\asymp n^2+\sum_{i=1}^3 f_i(n)$ and $g(n)\asymp n+ \sum_{i=1}^3 g_i(n)$.
\end{corollary}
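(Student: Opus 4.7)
The plan is to revisit the filling scheme used to prove Theorem \ref{thm:algebraictriangle} and, in parallel with the area estimate, bookkeep the length of each conjugator that appears. By Lemma \ref{lem:filling-pair-product}, the ambient product $G_1\times G_2\times G_3$ already admits a filling pair asymptotically equivalent to $(f(n),g(n))$, so whenever the construction reduces an instance to a null-homotopic word in $\prod_i G_i$ of length $O(n)$, that instance can simultaneously be filled with area $\preccurlyeq f(n)$ and diameter $\preccurlyeq g(n)$.

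Next I would inspect the recursive step of the proof of Theorem \ref{thm:algebraictriangle}: given a null-homotopic word $w$ of length $\leq n$ in $K$, one uses the splittings $s_i$ of $\phi_i$ to cut $w$ into two null-homotopic subwords in $K$ of length roughly $n/2$, joined by a ``correction word'' in $\prod_i G_i$ of length $O(n)$ that is filled in the ambient product. Iterating the procedure produces a dyadic tree of decompositions, with pieces of length $O(n/2^k)$ at depth $k$. The area estimate $\log(n)\cdot\overline{f}(n)$ then arises from summing $\sum_{k=0}^{\log_2 n} 2^k f(Cn/2^k)$ and applying superadditivity of $\overline{f}$, reproducing what Theorem \ref{thm:algebraictriangle} already asserts.

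To obtain the diameter bound I would observe that the filling diameter is the maximum, not the sum, of conjugator lengths. Along any root-to-leaf path in the dyadic tree, conjugators do concatenate, but since their lengths decay geometrically --- $O(n), O(n/2), O(n/4),\dots$ --- the accumulated length is still $O(n)$. The ambient filling used at each node contributes at most $g(Cn/2^k)\leq g(n)$ to the diameter. Hence every conjugator in the final filling of $w$ has length $\preccurlyeq n+g(n)\asymp g(n)$, since $g(n)\succcurlyeq n$. Combined with the area estimate this yields the filling pair $(\log(n)\cdot\overline{f}(n),\,g(n))$ claimed in the corollary.

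The main obstacle is the careful bookkeeping at the recursive step: one has to confirm that the conjugators introduced at depth $k$ genuinely have length $O(n/2^k)$, rather than a uniform $O(n)$, so that the depth-$\log n$ recursion does not inflate the diameter by a logarithmic factor. This geometric decay of conjugator lengths along the recursion tree is precisely the reason the diameter bound contains no extra $\log n$, unlike the area bound.
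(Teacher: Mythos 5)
Your overall plan --- tracking filling diameters alongside the area estimate and invoking Lemma \ref{lem:filling-pair-product} so that each ambient instance is filled with both bounds simultaneously --- is sound, and you arrive at the correct conclusion. However, the ``recursive step'' you describe does not match the construction in the proof of Theorem \ref{thm:algebraictriangle}: that proof does not cut $w$ into two shorter null-homotopic subwords joined by a correction word, but rather tessellates the disc bounded by $\gamma$ (as in Figure \ref{fig:hyptriangle}) into spanning triangles and boundary bigons all of whose vertices lie on $\gamma$, and then fills each triangle via the regions of Figure \ref{fig:algebraicspanningtriangle}. This distinction matters because it makes the diameter bound nearly immediate: every region of every spanning triangle has boundary length at most $12\cdot 3\cdot 2^k$, so each can be filled with diameter $\leq g(12\cdot 3\cdot 2^k)\preccurlyeq g(n)$; and since every triangle and bigon in the tessellation has a vertex on $\gamma$, every conjugator in the resulting van Kampen diagram lies within $g(n)$ of some vertex on $\gamma$, which is within $n$ of the basepoint, giving diameter $\preccurlyeq n + g(n) \asymp g(n)$. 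There is no need to track accumulated conjugator lengths along a ``root-to-leaf path'' or to verify a geometric decay of those lengths --- the ``main obstacle'' you flag at the end is precisely what the all-vertices-on-$\gamma$ observation removes. Your route would likely work out after careful bookkeeping, but the paper's argument is shorter and bypasses the verification you are worried about.
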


We will now fix specific generating sets for the groups $G_i$ which are compatible with the splittings $s_i$ as follows. Fix generating sets $Z = \{z_1, \dots, z_m\}$ for $\ZZ^m$ and $V_i$ for $G_i$. Let $Z_i = \{s_i(z_1), \dots, s_i(z_m)\}$ and $Y_i = \{v s_i(\phi_i(v))^{-1}\mid v\in V_i\} \subset \ker(\phi_i)$. Then $Y_i\cup Z_i$ is a generating set for $G_i$. 

In the sequel we will use various sets that generate subgroups of $G_1\times G_2\times G_3$ defined in terms of these generating sets as laid out in the following lemma. 

\begin{lemma}\label{lem:trianglegensets}
	Let $G_i, Y_i, Z_i, Z$ and $\phi$ be as above. 
	Define the following subsets of $G_1\times G_2\times G_3$.
	\begin{itemize}
		\item  $T_1 = \{(s_1(z), s_2(z)^{-1}, e)\mid z\in Z\}$,
		\item  $T_2 = \{(e, s_2(z), s_3(z)^{-1})\mid z\in Z\}$,
		\item  $T_3 = \{(s_1(z)^{-1}, e, s_3(z))\mid z\in Z\}$,
		\item  $U_1 = \{(y, e, e)\mid y\in Y_1\}$, 
		\item  $U_2 = \{(e, y, e)\mid y\in Y_2\}$, 
		\item  $U_3 = \{(e, e, y)\mid y\in Y_3\}$.
	\end{itemize}
	Let $T = \cup_i T_i$ and $U = \cup_i U_i$. 
	Then the following hold: 
	\begin{itemize}
		\item $\langle U\cup T\rangle  = \ker(\phi)$,
		\item $\langle T\rangle\cong \ZZ^{2m}$,
		\item $\langle U_i\cup T_i\rangle  \cong G_i$,
		\item $\langle U_i\cup T\rangle  \cong G_i\times \ZZ^m$,
		\item $\langle U_i\cup U_j \cup T\rangle  \cong G_i\times G_j$, for $i\neq j$. 
	\end{itemize}
\end{lemma}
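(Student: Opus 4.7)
The plan is to treat the five bullets essentially independently, leveraging throughout the single observation that, as a splitting of a short exact sequence, each $s_i : \ZZ^m \to G_i$ is a group homomorphism; consequently $s_i(\ZZ^m)$ is an abelian subgroup of $G_i$ isomorphic to $\ZZ^m$ on which $\phi_i$ restricts to $s_i^{-1}$. From this all of the ``$\subseteq$''-inclusions are visible on generators (for instance $\phi(T_1) = z - z + 0 = 0$ for every $z \in Z$), and applying the same observation coordinatewise shows that any two elements of $T = T_1 \cup T_2 \cup T_3$ commute.

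For items (3), (4), and (5), my strategy is to produce in each case an explicit injective homomorphism from the claimed abstract group into $G_1 \times G_2 \times G_3$ and to verify that its image coincides with the span $\langle \cdot \rangle$ by checking both inclusions on generators. Concretely, taking $i = 1$ (and $\{i, j\} = \{1, 2\}$ in (5)) for definiteness, the maps I will use are $g \mapsto (g, s_2(-\phi_1(g)), e)$ for (3), $(g, a) \mapsto (g, s_2(a - \phi_1(g)), s_3(-a))$ for (4), and $(g_1, g_2) \mapsto (g_1, g_2, s_3(-\phi_1(g_1) - \phi_2(g_2)))$ for (5). In each case the homomorphism property reduces to $s_2(\ZZ^m)$ and $s_3(\ZZ^m)$ being abelian, injectivity is read off the first coordinate (and, in (4), also the third, via $\phi_3$), and each generator of the claimed generating set is visibly either the direct image of a generator of the domain or a short product of two such images. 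The remaining index choices follow by symmetry.

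For (2), commutativity of $T$ reduces everything to linear algebra over $\ZZ$: the map $(\phi_1, \phi_2, \phi_3) : G_1 \times G_2 \times G_3 \to \ZZ^{3m}$ sends $T$ onto a generating set of the sublattice $\{(a, b, c) : a + b + c = 0\} \cong \ZZ^{2m}$, and an element of $\langle T \rangle$ with trivial $(\phi_1, \phi_2, \phi_3)$-image must have each coordinate in $s_i(\ZZ^m) \cap \ker \phi_i = \{e\}$, so the restriction of $(\phi_1, \phi_2, \phi_3)$ to $\langle T \rangle$ is an isomorphism onto $\ZZ^{2m}$.

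Item (1) is then obtained from (5) by a direct rewriting argument. Given $(g_1, g_2, g_3) \in \ker \phi$, I will write $g_3$ as a word in $Y_3 \cup Z_3$ and substitute each letter $y \in Y_3^{\pm 1}$ by $(e, e, y) \in U_3^{\pm 1}$ and each letter $s_3(z)^{\epsilon}$ by $(s_1(z)^{-1}, e, s_3(z))^{\epsilon} \in T_3^{\pm 1}$; because $s_1(\ZZ^m)$ is abelian, the resulting product evaluates to $(s_1(-\phi_3(g_3)), e, g_3) \in \langle U_3 \cup T_3 \rangle$. The residue $(g_1 \cdot s_1(\phi_3(g_3)), g_2, e)$ has trivial third coordinate, and the condition $\phi_1(g_1) + \phi_2(g_2) + \phi_3(g_3) = 0$ makes it a valid image under the homomorphism from (5), so it lies in $\langle U_1 \cup U_2 \cup T \rangle$; multiplying the two pieces recovers $(g_1, g_2, g_3)$. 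The main obstacle throughout is simply guessing the three auxiliary homomorphisms in the second paragraph correctly so that every intended generator appears in the image exactly as written; once those are in hand, every remaining step is a short unpacking.
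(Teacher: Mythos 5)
Your proposal is correct, and the approach is essentially the same as the paper's: both arguments realize the generated subgroups as images of explicit injective homomorphisms (equivalently, show that the coordinate projections restricted to these subgroups are isomorphisms), and both handle item (1) by a reduction using the other items. The paper only writes out the first and last bullets and invokes symmetry for the rest; you spell out all five, but the underlying ideas coincide.
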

\begin{proof}
	We will prove the first and last statements. The others can be deduced via similar reasoning.
	Let $K \vcentcolon= \ker(\phi)$. 
	We will begin with the first statement. 
	It is clear that $U\cup T\subset K$. 
	Thus we must prove that any element of $K$ can be written as a word in these generators. 
	Let $(g_1, g_2, g_3)\in K$. 
	Modulo an element of $\langle U_1\cup U_2\cup T_2\cup T_3\rangle$ we can reduce to an element of the form $(e, e, h)$. 
	Applying elements from $U_3\cup T_2$, we can obtain an element of the form $(e, \left(s_2(\phi_3(h))\right)^{-1}, e)$. 
	However, since $(e, e, h)\in K$ we see that $\phi_3(h) = 0$ and we are done. 
	
	For the last statement we may assume $i = 1, j = 2$. 
	Let $H = \langle U_1 \cup U_2\cup T\rangle$. 
	Consider the projection $H\to G_1\times G_2$. 
	It is easy to see that it is surjective. Indeed, for $(g_1,g_2)\in G_1\times G_2$ there is an element of $\langle U_1\cup U_2\cup T_2\cup T_3\rangle$ of the form $(g_1, g_2, s_3(\phi_1(g_1)+\phi_2(g_2))^{-1})$. 
	However, any two such elements of $H$ differ by an element of the form $(e,e,s_3(z))\in K$ for some $z\in \ZZ^m$. This implies that the projection map is injective and thus an isomorphism.
\end{proof}

We fix finite presentations for all groups in Lemma \ref{lem:trianglegensets} (except $\ker(\phi)$) and for $G_1\times G_2\times G_3$ with respect to the given generating sets. By Lemmas \ref{lem:Dehndirectproduct} and \ref{lem:nice-representatives} we may assume that the Dehn functions of all of the groups $\ZZ^{2m}$, $G_i\times \ZZ^m$, $G_i\times G_j$ and $G_1\times G_2\times G_3$ are bounded above by the function $f$ from Theorem \ref{thm:algebraictriangle} (after possibly making $f$ larger in its equivalence class).

We now provide a filling of a null-homotopic word in the generating set $U\cup T$ of $K$. We first construct fillings for triangles spanned by 3 elements of $G_1\times G_2 \times G_3$ as in Figure \ref{fig:algebraicspanningtriangle} by gluing together fillings in subgroups generated by subsets of $U\cup T$ as in Lemma \ref{lem:trianglegensets}. We then fill an arbitrary loop by tiling it by triangles as in Figure \ref{fig:hyptriangle}. The connection to the geometric approach from \cite{CarFor-17} is that if there are geometric actions of $G_i\curvearrowright X_i$ on length spaces and admissible lines $L_i$ through every point in $X_i$, then our subgroups act geometrically on 0-level sets of products of the $X_i$ and the $L_i$. 
	
\subsection{A spanning triangle}

Throughout this section $K$ will always be equipped with the generating set $U\cup T$. 
Let $\a = (\a_1, \a_2, \a_3), \b = (\b_1, \b_2, \b_3)$ and $\c = (\c_1, \c_2, \c_3)$ be elements of $K$. 
We will construct a triangular loop in the Cayley graph of $K$ with these three elements as vertices. It will arise as boundary loop of a subdivided triangle as in Figure \ref{fig:algebraicspanningtriangle} whose edges will be labelled by words in the generating sets of the adjacent bounded regions. Where two bounded regions are adjacent to an edge, note that one generating set is contained in the other and the edge will be labelled by a word in the smaller one of the generating sets. Throughout this section by an edge we will be referring to an edge in the 1-skeleton of the triangle (as opposed to an edge of the Cayley graph).

As a consequence we will be able to control the filling area of the boundary word of the triangle in terms of filling areas of words in the subgroups labelling the bounded regions. By carefully controlling the lengths of the words labelling each of the edges throughout our construction, we will thus be able to obtain bounds on the area of the boundary word of the triangle in terms of the Dehn functions of the groups labelling the bounded regions and thus in terms of $f$.

We will now describe the construction of the words labelling the edges of the triangle. We keep track of upper bounds on their lengths in Figure \ref{fig:lengthtri}. Note that due to symmetries it will be sufficient to focus on the edges $\circled{1}$ - $\circled{4}$. 

We start by constructing a path in the Cayley graph labelling the 3 edges between $\a$ and $\b$. 

First, by applying generators from $U_2\cup T$, we can construct a path from $(\a_1,\a_2,\a_3)$ to a vertex of the form $(\a_1', \b_2, \a_3)$, where $\a_1$ and $\a_1'$ differ by an element of $s_1(\ZZ^m)$. The corresponding word in $U_2\cup T$ provides the label for the edge $\circled{1}$.

Next we use generators from $U_3\cup T$ to construct a path from the element $(\a_1', \b_2, \a_3)$ to an element of the form $(\a_1, \b_2', \b_3)$; 
this is possible since $\a_1\cdot (\a_1')^{-1}\in s_1(\ZZ^m)$. Similar as above, we see that $\b_2\cdot (\b_2')^{-1}\in s_2(\ZZ^m)$. 

Finally, since $\b_2\cdot (\b_2')^{-1}\in s_2(\ZZ^m)$, we can construct a path from $(\a_1, \b_2', \b_3)$ to $(\b_1, \b_2, \b_3)$ using elements from $U_1\cup T$. 

A similar construction provides paths labelling the edges from $\a$ to $\c$ and from $\b$ to $\c$. 

We now describe the construction of the three interior vertices in \Cref{fig:algebraicspanningtriangle} and the corresponding edges.

Applying generators from $U_3\cup T$ to $(\a_1', \b_2, \a_3)$ we can construct a path to a vertex of the form $(\a_1''', \b_2, \c_3)$. 
As before $\a_1'(\a_1''')^{-1}\in s_1(\ZZ^m)$. Similarly we can apply generators from $U_2\cup T$ to the element $(\a_1'', \a_2, \c_3)$ to construct a path to a vertex of the form $(\overline{\a}_1, \b_2, \c_3)$ with $\a_1'' \cdot \overline{\a}_1^{-1}\in s_1(\ZZ^m)$. A priori we could have $\a_1'''\neq \overline{\a}_1$ in which case we could not close the loop. However, recalling that $\a_1 (\a_1')^{-1}, \a_1 (\a_1'')^{-1}\in s_1(\ZZ^m)\cong \ZZ^m$, we obtain $\a_1'''\cdot \overline{\a}_1^{-1}\in s_1(\ZZ^m)$ and thus $(\a_1'''\cdot \overline{\a}_1^{-1},e,e)= (\a_1''',\b_2,\c_3)\cdot(\overline{\a}_1,\b_2,\c_3)^{-1}\in K\cap s_1(\ZZ^m)=\left\{(e,e,e)\right\}$. 

Similar arguments allow us to obtain vertices $(\a_1, \b_2''', \c_3)$ and $(\a_1, \b_2, \c_3''')$ with $\b_2\cdot (\b_2''')^{-1}\in s_2(\ZZ)$ and $\c_3\cdot (\c_3''')^{-1}\in s_3(\ZZ^m)$ together with words labelling the corresponding edges.

Finally, we can construct a path from $(\a_1''',\b_2,\c_3)$ to a vertex of the form $(\a_1,\overline{\b}_2,\c_3)$ labelled by a word in the elements of $T$. In particular $\b_2\cdot \overline{\b}_2^{-1}\in s_2(\ZZ^m)$, which implies that also $\overline{\b}_2\cdot (\b_2''')^{-1}\in s_2(\ZZ^m)$. Hence, $(e,\overline{\b}_2\cdot (\b_2''')^{-1},e)=(\a_1,\overline{\b}_2,\c_3)\cdot (\a_1,\b_2''',\c_3)^{-1}\in K\cap s_2(\ZZ^m)=\left\{(e,e,e)\right\}$ and therefore $(\a_1,\overline{\b}_2,\c_3)=(\a_1,\b_3''',\c_3)$. This provides a word in $T$ labelling the edge $\circled{4}$. 

Analogous arguments allow us to construct words in T labelling the other two edges of the interior triangle.

It remains to bound above the lengths of the edges labelled $\circled{1} - \circled{4}$ with the other bounds following by symmetries. 

Denote by $d_i$ the word metric on $G_i$ with respect to the generating set $Y_i\cup Z_i$ and by $d_{\ZZ^m}$ the word metric on $\ZZ^m$ with respect to the generating set $Z$. Note that the bijection $Z\to Z_i$ of generating sets induces an isometric embedding $s_i(\ZZ^m)\leq G_i$ and that the projection $\phi_i: G_i\to \ZZ^m$ is length non-increasing. We will frequently use this without further mention.

The minimal length of a word labelling $\circled{1}$ is exactly $d_2(\a_2, \b_2)$. To see this let $w_2 = x_1^{\epsilon_1}\dots x_k^{\epsilon_k}$ with $\epsilon_i\in \left\{\pm 1\right\}$ be a word of minimal length representing $\a_2^{-1}\b_2$ in the generators $Y_2\cup Z_2$ of $G_i$ and their inverses. For $1\leq i\leq k$ we either have that $(s_1(\phi_2(x_i))^{-1}, x_i, e)$ is in $T_1$, if $x_i\in Z_2$, or in $U_2$, if $x_i\in Y_2$. The word $(s_1(\phi_2(x_1))^{-1}, x_1, e)\dots (s_1(\phi_2(x_k))^{-1}, x_k, e)$ has length $d_2(\a_2, \b_2)$ and the corresponding path connects the endpoints of $\circled{1}$. This provides the desired upper bound. For the lower bound observe that we can project any path in the level set to the second factor to get a path in $G_2$ from $\a_2$ to $\b_2$ in the generators $Y_2\cup Z_2$.

The edge $\circled{2}$ is labelled by a geodesic word in $U_3\cup T$. 
Using $d_3(\a_3, \b_3)$ generators we can obtain a vertex of the form $(\a_1', \overline{\b}_2, \b_3)$ for some $\overline{\b}_2\in G_2$. 
Since $\a_1\cdot (\a_1')^{-1}\in s_1(\ZZ^m)$, we can now apply $d_{\ZZ^m}(\phi_1(\a_1),\phi_1(\a_1'))=d_1(\a_1, \a_1')$ generators from $T_1$ to obtain the vertex $(\a_1, \b_2'', \b_3)$.  
Since $\a_1'$ was obtained from $\a_1$ via the path labelling the edge $\circled{1}$, we see that $d_{\ZZ^m}(\phi_1(\a_1),\phi_1(\a_1'))=d_{\ZZ^m}(\phi_2(\a_2),\phi_2(\b_2))\leq d_2(\a_2, \b_2)$. 
Thus the length of the path labelling the edge $\circled{2}$ is bounded above by $d_3(\a_3, \b_3) + d_2(\a_2, \b_2)$. 

The edge $\circled{3}$ is labelled by a geodesic word in the elements of $U_3\cup T_3$ and thus of length $d_3(\a_3, \c_3)$. 

Finally, the edge $\circled{4}$ is labelled by a geodesic path in the elements of $T$. Its length is bounded by the number of letters from $T$ in the words labelling the other three edges of the bounded region with label $\langle U_3\cup T\rangle$. 
Thus, it is bounded above by $d_3(\a_3, \b_3) + d_2(\a_2, \b_2) + d_3(\a_3, \c_3) + d_3(\b_3, \c_3)$. 

Let $r_{\a \b } = \max_i\{d_i(\a_i, \b_i)\}$, $r_{\b \c} = \max_i\{d_i(\b_i, \c_i)\}$ and $r_{\a \c} = \max_i\{d_i(\a_i, \c_i)\}$. 
We denote $U:=r_{\a \b} + r_{\b \c} + r_{\a \c}$ and observe that $U$ is bounded above by $D = d(\a, \b) + d(\a, \c) + d(\b, \c)$, where $d$ denotes the product metric on $G_1\times G_2\times G_3$.
We will refer to $D$ as the {\em perimeter} of the triangle. 
This will permit us to bound the area of our triangle purely in terms of $D$. 
Indeed, all edges of the triangle have length bounded by $3U$. 
Thus, the boundary of each region has length at most $12U$. 
In particular, we can obtain a filling of area bounded by $f(12U)$ for each of the null-homotopic words labelling the bounded regions. 

We deduce that the area of the triangle in $K$ is bounded above by $7f(12U)\leq 7f(12D)$. 

\begin{figure}
	\centering
	\begin{tikzpicture}[scale = 0.15]
	\pgfmathsetmacro{\rtt}{1.73}
	\pgfmathsetmacro{\wid}{51}
	\pgfmathsetmacro{\ab}{1}

	\node [label={[label distance=-0.1cm]below:\scriptsize $(\c_1, \c_2, \c_3)$}] (c) at (0, 0) {};
	\node [label={[label distance=-0.1cm]below:\scriptsize $(\b_1, \b_2, \b_3)$}] (b) at (\wid, 0) {};
	\node [label={[label distance=-0.1cm]above:\scriptsize $(\a_1, \a_2, \a_3)$}] (a) at (\wid/2, \wid*\rtt/2) {};
	
	\node [label={[label distance=-0.051cm]right:\scriptsize $(\a_1''', \b_2, \c_3)$}] (ma) at (\wid/2, \wid*\rtt/6 + \wid/\rtt/3) {};
	\node [label={[label distance=-0.25cm]below left:\scriptsize $(\a_1, \b_2, \c_3''')$}] (mc) at (\wid/3, \wid/\rtt/3) {};
	\node [label={[label distance=-0.25cm]below right:\scriptsize $(\a_1, \b_2''', \c_3)$}] (mb) at (2*\wid/3, \wid/\rtt/3) {};

	\node [label={[label distance=-0.19cm]above right:\scriptsize $(\a_1', \b_2, \a_3)$}] (aab) at (\wid/2 + \wid/6, \wid*\rtt/6 + \wid/\rtt/3 + \wid/6/\rtt) {};
	\node [label={[label distance=-0.19cm]above right:\scriptsize $(\a_1, \b_2'', \b_3)$}] (abb) at (2*\wid/3 + \wid/6, \wid/\rtt/3 + \wid/6/\rtt) {};
	
	\node [label={[label distance=-0.1cm]below:\scriptsize $(\b_1, \b_2', \c_3)$}] (bbc) at (2*\wid/3, 0) {};
	\node [label={[label distance=-0.1cm]below:\scriptsize $(\c_1, \b_2, \c_3'')$}] (bcc) at (\wid/3, 0) {};
	
	\node [label={[label distance=-0.19cm]above left:\scriptsize $(\a_1'', \a_2, \c_3)$}] (aac) at (\wid/2 -\wid/6, \wid*\rtt/6 + \wid/\rtt/3 + \wid/6/\rtt) {};
	\node [label={[label distance=-0.19cm]above left:\scriptsize $(\a_1, \c_2, \c_3')$}] (acc) at (\wid/3 - \wid/6, \wid/\rtt/3 + \wid/6/\rtt) {};

	\fill (a) circle (2em);
	\fill (b) circle (2em);
	\fill (c) circle (2em);
	\fill (ma) circle (2em);
	\fill (mb) circle (2em);
	\fill (mc) circle (2em);
	\fill (aab) circle (2em);
	\fill (abb) circle (2em);
	\fill (aac) circle (2em);
	\fill (acc) circle (2em);
	\fill (bcc) circle (2em);
	\fill (bbc) circle (2em);
	
	\draw (0, 0) -- (\wid, 0) -- (\wid/2, \wid*\rtt/2) -- (0,0);
	\draw (\wid/3, \wid/\rtt/3) -- (2*\wid/3, \wid/\rtt/3) -- (\wid/2, \wid*\rtt/6 + \wid/\rtt/3) --(\wid/3, \wid/\rtt/3);

	\draw (\wid/2 + \wid/6, \wid*\rtt/6 + \wid/\rtt/3 + \wid/6/\rtt) -- (\wid/2, \wid*\rtt/6 + \wid/\rtt/3);
	\draw (2*\wid/3 + \wid/6, \wid/\rtt/3 + \wid/6/\rtt) -- (2*\wid/3, \wid/\rtt/3);
	
	\draw (2*\wid/3, 0) -- (2*\wid/3, \wid/\rtt/3);
	\draw (\wid/3, 0) -- (\wid/3, \wid/\rtt/3);
	
	\draw (\wid/2 -\wid/6, \wid*\rtt/6 + \wid/\rtt/3 + \wid/6/\rtt) -- (\wid/2, \wid*\rtt/6 + \wid/\rtt/3);
	\draw (\wid/3 - \wid/6, \wid/\rtt/3 + \wid/6/\rtt) -- (\wid/3, \wid/\rtt/3);

	\node () at (18*\wid/36, 18*\wid*\rtt/49 +1) {$\scriptscriptstyle\langle U_2\cup U_3\cup T \rangle$};
	\node [rotate = -60] () at (24.5*\wid/36 +0.9, 44*\wid*\rtt/200 +0.3) {$\scriptscriptstyle\langle U_3\cup T \rangle$};
	\node [rotate = 60]  () at (8.5*\wid/26-0.9, 44*\wid*\rtt/200 +0.3) {$\scriptscriptstyle\langle U_2\cup T\rangle$};
	\node () at (18*\wid/36, 34*\wid*\rtt/200+1) {$\scriptscriptstyle\langle T \rangle$};
	\node () at (29*\wid/36, 9*\wid*\rtt/200+1) {$\scriptscriptstyle\langle U_1\cup U_3\cup T\rangle$};
	\node () at (18*\wid/36, 9*\wid*\rtt/200+1) {$\scriptscriptstyle\langle U_1\cup T \rangle$};
	\node () at (7*\wid/36, 9*\wid*\rtt/200+1) {$\scriptscriptstyle\langle U_1\cup U_2\cup T \rangle$};

	\node () at (18*\wid/36, 18*\wid*\rtt/49-1) {$\scriptscriptstyle \cong G_2\times G_3$};
	\node [rotate = -60] () at (24.5*\wid/36 -0.9, 44*\wid*\rtt/200 -0.3) {$\scriptscriptstyle \cong G_3\times \ZZ^m$};
	\node [rotate = 60]  () at (8.5*\wid/26 +0.9, 44*\wid*\rtt/200 -0.3) {$\scriptscriptstyle \cong G_2\times \ZZ^m$};
	\node () at (18*\wid/36, 34*\wid*\rtt/200-1) {$\scriptscriptstyle \cong \ZZ^{2m}$};
	\node () at (29*\wid/36, 9*\wid*\rtt/200-1) {$\scriptscriptstyle \cong G_1\times G_3$};
	\node () at (18*\wid/36, 9*\wid*\rtt/200-1) {$\scriptscriptstyle \cong G_1\times \ZZ^m$};
	\node () at (7*\wid/36, 9*\wid*\rtt/200-1) {$\scriptscriptstyle \cong G_1\times G_2$};

	\node () at (22.5*\wid/36, 20.5*\wid*\rtt/49) {\circled{1}};
	\node () at (28.5*\wid/36, 12.5*\wid*\rtt/49) {\circled{2}};
	\node () at (20.5*\wid/36, 16*\wid*\rtt/49) {\circled{3}};
	\node () at (22*\wid/36, 10*\wid*\rtt/49) {\circled{4}};

\end{tikzpicture}	
	\caption{An algebraic spanning triangle.}
	\label{fig:algebraicspanningtriangle}
\end{figure}
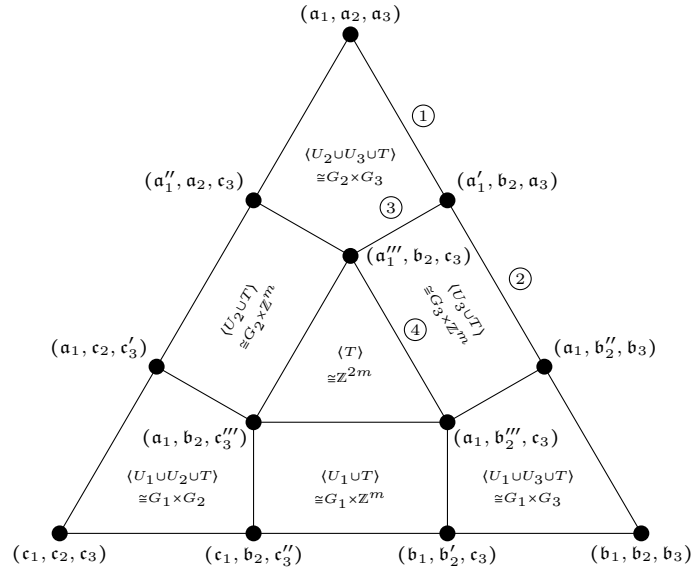

\begin{figure}
	\centering
	\begin{tikzpicture}[scale = 0.15]
	\pgfmathsetmacro{\rtt}{1.73}
	\pgfmathsetmacro{\wid}{70}
	\pgfmathsetmacro{\ab}{1}

	\node [label={[label distance=-0.1cm]below:\scriptsize $(\c_1, \c_2, \c_3)$}] (c) at (0, 0) {};
	\node [label={[label distance=-0.1cm]below:\scriptsize $(\b_1, \b_2, \b_3)$}] (b) at (\wid, 0) {};
	\node [label={[label distance=-0.1cm]above:\scriptsize $(\a_1, \a_2, \a_3)$}] (a) at (\wid/2, \wid*\rtt/2) {};
	
	\node   (ma) at (\wid/2, \wid*\rtt/6 + \wid/\rtt/3) {};
	\node   (mc) at (\wid/3, \wid/\rtt/3) {};
	\node   (mb) at (2*\wid/3, \wid/\rtt/3) {};

	\node   (aab) at (\wid/2 + \wid/6, \wid*\rtt/6 + \wid/\rtt/3 + \wid/6/\rtt) {};
	\node   (abb) at (2*\wid/3 + \wid/6, \wid/\rtt/3 + \wid/6/\rtt) {};
	
	\node   (bbc) at (2*\wid/3, 0) {};
	\node   (bcc) at (\wid/3, 0) {};
	
	\node   (aac) at (\wid/2 -\wid/6, \wid*\rtt/6 + \wid/\rtt/3 + \wid/6/\rtt) {};
	\node  (acc) at (\wid/3 - \wid/6, \wid/\rtt/3 + \wid/6/\rtt) {};

	\fill (a) circle (2em);
	\fill (b) circle (2em);
	\fill (c) circle (2em);
	\fill (ma) circle (2em);
	\fill (mb) circle (2em);
	\fill (mc) circle (2em);
	\fill (aab) circle (2em);
	\fill (abb) circle (2em);
	\fill (aac) circle (2em);
	\fill (acc) circle (2em);
	\fill (bcc) circle (2em);
	\fill (bbc) circle (2em);
	
	\draw (0, 0) -- (\wid, 0) -- (\wid/2, \wid*\rtt/2) -- (0,0);
	\draw (\wid/3, \wid/\rtt/3) -- (2*\wid/3, \wid/\rtt/3) -- (\wid/2, \wid*\rtt/6 + \wid/\rtt/3) --(\wid/3, \wid/\rtt/3);

	\draw (\wid/2 + \wid/6, \wid*\rtt/6 + \wid/\rtt/3 + \wid/6/\rtt) -- (\wid/2, \wid*\rtt/6 + \wid/\rtt/3);
	\draw (2*\wid/3 + \wid/6, \wid/\rtt/3 + \wid/6/\rtt) -- (2*\wid/3, \wid/\rtt/3);
	
	\draw (2*\wid/3, 0) -- (2*\wid/3, \wid/\rtt/3);
	\draw (\wid/3, 0) -- (\wid/3, \wid/\rtt/3);
	
	\draw (\wid/2 -\wid/6, \wid*\rtt/6 + \wid/\rtt/3 + \wid/6/\rtt) -- (\wid/2, \wid*\rtt/6 + \wid/\rtt/3);
	\draw (\wid/3 - \wid/6, \wid/\rtt/3 + \wid/6/\rtt) -- (\wid/3, \wid/\rtt/3);

	\node [rotate = -60] () at (22.5*\wid/36, 20.5*\wid*\rtt/49) {\scriptsize $d_2(\a_2, \b_2)$};
	\node [rotate = -60] () at (34*\wid/36, 4.5*\wid*\rtt/49)    {\scriptsize $d_1(\a_1, \b_1)$};
	\node () at (30*\wid/36, -1*\wid*\rtt/49)     {\scriptsize $d_3(\b_3, \c_2)$};
	\node () at (6*\wid/36, -1*\wid*\rtt/49)      {\scriptsize $d_2(\b_2, \c_2)$};
	\node [rotate = 60] () at (2*\wid/36, 4.5*\wid*\rtt/49)     {\scriptsize  $d_1(\a_1, \c_1)$};
	\node [rotate = 60] () at (13.5*\wid/36, 20.5*\wid*\rtt/49) {\scriptsize $d_3(\a_3, \c_3)$};
	
	\node [rotate = -60] () at (28.5*\wid/36, 12.5*\wid*\rtt/49) {\scriptsize $d_2(\a_2, \b_2) + d_3(\a_3, \b_3)$};
	\node () at (18*\wid/36, -1*\wid*\rtt/49)     {\scriptsize $d_1(\b_1, \c_1) + d_3(\b_3, \c_3)$};
	\node [rotate = 60] () at (7.5*\wid/36, 12.5*\wid*\rtt/49)  {\scriptsize $d_1(\a_1, \c_1) + d_2(\a_2, \c_2)$};
	
	\node[rotate = 30] () at (20.5*\wid/36, 16*\wid*\rtt/49)   {\scriptsize $d_3(\a_3, \c_3)$};
	\node[rotate = 30] () at (27.5*\wid/36, 6*\wid*\rtt/49)    {\scriptsize $d_3(\b_3, \c_3)$};
	\node[rotate = -90] () at (25*\wid/36, 2.3*\wid*\rtt/49)  {\scriptsize $d_1(\a_1, \b_1)$};
	\node[rotate = 90] () at (11*\wid/36, 2.3*\wid*\rtt/49)  {\scriptsize $d_1(\a_1, \c_1)$};
	\node[rotate = -30] () at (8.5*\wid/36, 6*\wid*\rtt/49)     {\scriptsize $d_2(\c_2, \b_2)$};
	\node[rotate = -30] () at (15.5*\wid/36, 16*\wid*\rtt/49)   {\scriptsize $d_2(\a_2, \b_2)$};
	
	\node[rotate = -60] () at (22.5*\wid/36, 10*\wid*\rtt/49)     {\scriptsize $\begin{aligned} d_2(\a_2, \b_2) &+ d_3(\a_3, \b_3) +\\d_3(\a_3, \c_3) &+ d_3(\b_3, \c_3)\end{aligned}$};
	\node () at (18*\wid/36, 4*\wid*\rtt/49)    {\scriptsize $\begin{aligned}d_1(\a_1, \b_1) &+ d_1(\a_1, \c_1) + \\d_1(\b_1, \c_1) &+ d_3(\b_3, \c_3)\end{aligned}$};
	\node [rotate = 60] () at (13.5*\wid/36, 10*\wid*\rtt/49)     {\scriptsize $\begin{aligned}d_1(\a_1, \c_1) &+ d_2(\a_2, \b_2) + \\d_2(\a_2, \c_2) &+ d_2(\b_2, \c_2)\end{aligned}$};

\end{tikzpicture}
	\caption{Length bounds on the words labelling the edges of the algebraic spanning triangle in \Cref{fig:algebraicspanningtriangle}.}
	\label{fig:lengthtri}
\end{figure}
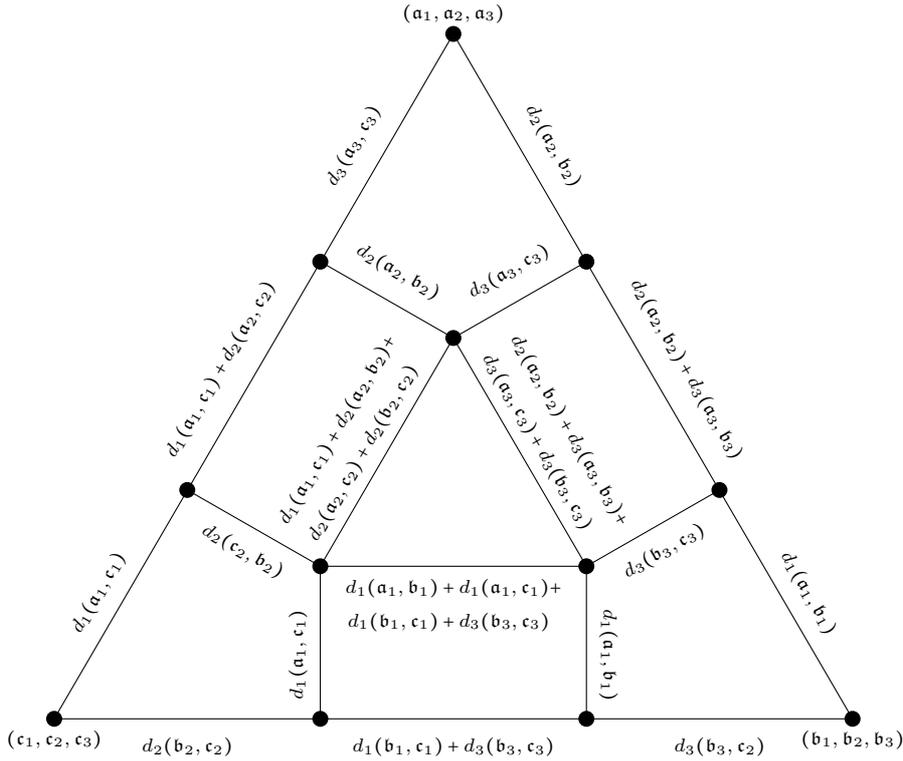

\begin{remark}\label{rem:bigonshavezeroarea}
	Suppose that $\a$ and $\b$ differ by a single generator of $U\cup T$. 
	Then the path between them in \Cref{fig:algebraicspanningtriangle} has length at most 2. 
	To see this, note that the length of the path is bounded by $d_1(\a_1, \b_1) + 2\cdot d_2(\a_2, \b_2)+ d_3(\a_3, \b_3)$. 
	At least two of these quantities vanish. 
	Thus we obtain the desired bound. 
\end{remark}

We can now turn to the remainder of the proof of Theorem \ref{thm:algebraictriangle}.
The rest of the argument proceeds similarly to that of \cite{CarFor-17}.

\begin{proof}[Proof of \cref{thm:algebraictriangle}]
	Let $w$ be a null-homotopic word in $K=\langle U\cup T\rangle$ of length $n\geq 3$ and let $k\in\ZZ$ be such that $3\cdot2^{k-1}\leq n\leq 3\cdot 2^{k}$. Let $\widehat{\gamma}:\left[0,n\right]\to Cay(K,U\cup T)$ be the loop corresponding to $w$ in the Cayley graph of $K$ parametrised by length and based at the identity.
	We can extend $\widehat{\gamma}$ to a loop $\gamma:\left[0,3\cdot 2^k\right]\to Cay(K,U\cup T)$ by adding a constant path to the end. Note that $\widehat{\gamma}$ maps all integers in $\left[n,3\cdot 2^k\right]$ to the vertex corresponding to the trivial element.
	Since we have appended a trivial path, $\gamma$ and $\hat{\gamma}$ have the same area. 
	We will show that $\gamma$ has area $\preccurlyeq f(n)$.

	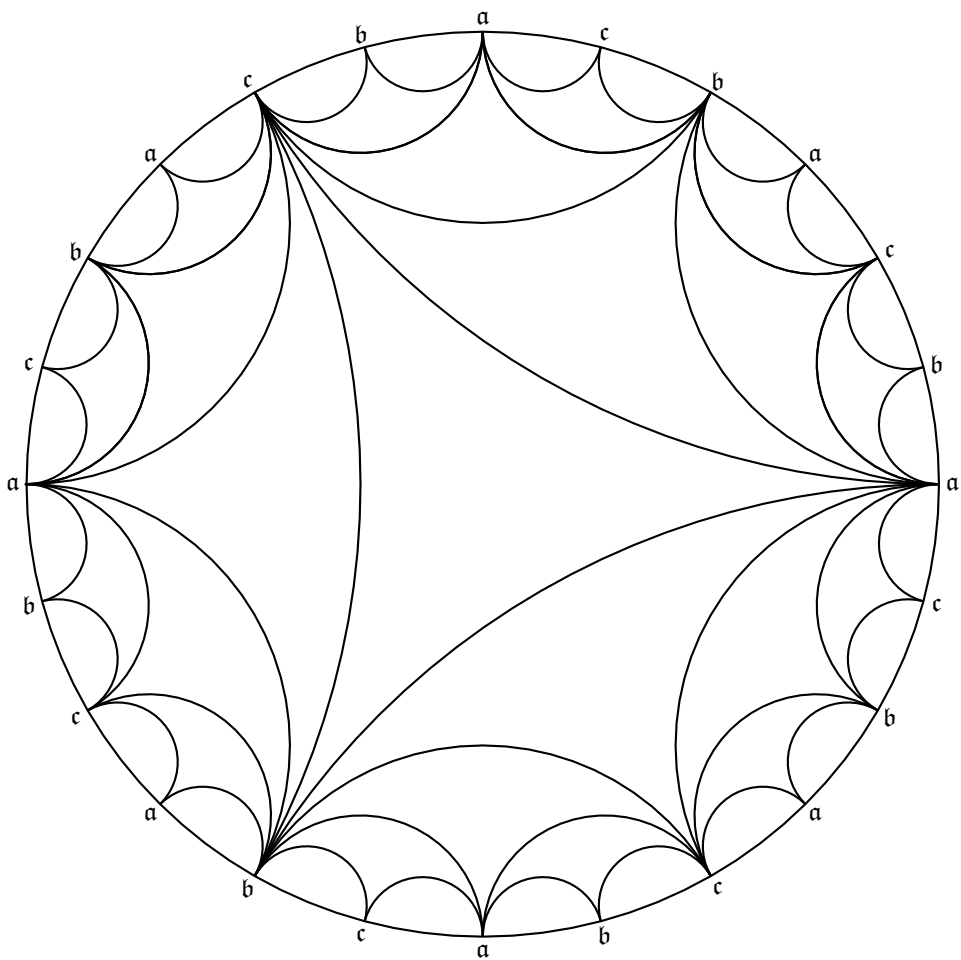
\begin{figure}
			\begin{tikzpicture}[scale = 6]
				\draw[black, thick, domain = 0:90] plot ({cos(\x)}, {sin(\x)});
				\draw[black, thick, domain = 180:270] plot ({cos(\x)}, {sin(\x)});
				\draw[black, thick, domain = 270:360] plot ({cos(\x)}, {sin(\x)});
				\draw[black, thick, domain = 90:180] plot ({cos(\x)}, {sin(\x)});
				
				\draw [black,thick,domain=269.613242082:210.00088480270372] plot ({1.0+1.73205080757*cos(\x)}, {1.7320508075688767+1.73205080757*sin(\x)});
				\draw [black,thick,domain=30.000884802703716:-30.00088480270375] plot ({-1.9999999999999996+1.73205080757*cos(\x)}, {8.881784197001262e-16+1.73205080757*sin(\x)});
				\draw [black,thick,domain=149.99911519729622:90.3867579182] plot ({1.0+1.73205080757*cos(\x)}, {-1.7320508075688785+1.73205080757*sin(\x)});
				\draw [black,thick,domain=269.613242082:149.99911519729625] plot ({1.0+0.57735026919*cos(\x)}, {0.5773502691896257+0.57735026919*sin(\x)});
				\draw [black,thick,domain=329.99911519729625:210.00088480270372] plot ({3.8459253727671266e-16+0.57735026919*cos(\x)}, {1.1547005383792515+0.57735026919*sin(\x)});
				\draw [black,thick,domain=30.000884802703716:-90.3867579182] plot ({-1.0+0.57735026919*cos(\x)}, {0.5773502691896261+0.57735026919*sin(\x)});
				\draw [black,thick,domain=90.3867579182:-30.00088480270375] plot ({-1.0+0.57735026919*cos(\x)}, {-0.5773502691896254+0.57735026919*sin(\x)});
				\draw [black,thick,domain=149.99911519729625:30.000884802703737] plot ({-3.8459253727671227e-16+0.57735026919*cos(\x)}, {-1.1547005383792517+0.57735026919*sin(\x)});
				\draw [black,thick,domain=210.00088480270375:90.3867579182] plot ({1.0+0.57735026919*cos(\x)}, {-0.5773502691896256+0.57735026919*sin(\x)});
				\draw [black,thick,domain=269.613242082:119.9982303945925] plot ({1.0+0.267949192431*cos(\x)}, {0.26794919243112264+0.267949192431*sin(\x)});
				\draw [black,thick,domain=299.9982303945925:149.99911519729628] plot ({0.7320508075688774+0.267949192431*cos(\x)}, {0.7320508075688773+0.267949192431*sin(\x)});
				\draw [black,thick,domain=329.99911519729625:180.0] plot ({0.26794919243112286+0.267949192431*cos(\x)}, {1.0+0.267949192431*sin(\x)});
				\draw [black,thick,domain=359.99999999999994:210.00088480270372] plot ({-0.26794919243112236+0.267949192431*cos(\x)}, {1.0000000000000002+0.267949192431*sin(\x)});
				\draw [black,thick,domain=30.000884802703702:-119.99823039459253] plot ({-0.7320508075688771+0.267949192431*cos(\x)}, {0.7320508075688776+0.267949192431*sin(\x)});
				\draw [black,thick,domain=60.001769605407496:-90.3867579182] plot ({-1.0+0.267949192431*cos(\x)}, {0.26794919243112264+0.267949192431*sin(\x)});
				\draw [black,thick,domain=89.99734559188883:-60.001769605407446] plot ({-0.9999999999999999+0.267949192431*cos(\x)}, {-0.2679491924311232+0.267949192431*sin(\x)});
				\draw [black,thick,domain=119.99823039459247:-30.00088480270375] plot ({-0.732050807568878+0.267949192431*cos(\x)}, {-0.7320508075688767+0.267949192431*sin(\x)});
				\draw [black,thick,domain=149.99911519729622:0.0] plot ({-0.26794919243112303+0.267949192431*cos(\x)}, {-1.0+0.267949192431*sin(\x)});
				\draw [black,thick,domain=179.99999999999994:30.00088480270373] plot ({0.2679491924311222+0.267949192431*cos(\x)}, {-1.0000000000000002+0.267949192431*sin(\x)});
				\draw [black,thick,domain=210.00088480270378:60.00176960540744] plot ({0.7320508075688772+0.267949192431*cos(\x)}, {-0.7320508075688771+0.267949192431*sin(\x)});
				\draw [black,thick,domain=240.00176960540745:90.3867579182] plot ({1.0+0.267949192431*cos(\x)}, {-0.26794919243112303+0.267949192431*sin(\x)});
				\draw [black,thick,domain=269.613242082:119.99823039459254] plot ({1.0+0.267949192431*cos(\x)}, {0.26794919243112286+0.267949192431*sin(\x)});
				\draw [black,thick,domain=299.9982303945925:149.9991151972962] plot ({0.7320508075688775+0.267949192431*cos(\x)}, {0.7320508075688769+0.267949192431*sin(\x)});
				\draw [black,thick,domain=329.99911519729625:179.99999999999991] plot ({0.26794919243112353+0.267949192431*cos(\x)}, {0.9999999999999997+0.267949192431*sin(\x)});
				\draw [black,thick,domain=359.9999999999999:210.0008848027037] plot ({-0.26794919243112164+0.267949192431*cos(\x)}, {1.0000000000000004+0.267949192431*sin(\x)});
				\draw [black,thick,domain=30.000884802703656:-119.99823039459247] plot ({-0.7320508075688775+0.267949192431*cos(\x)}, {0.7320508075688776+0.267949192431*sin(\x)});
				\draw [black,thick,domain=60.00176960540746:-89.9973455918888] plot ({-0.9999999999999999+0.267949192431*cos(\x)}, {0.2679491924311232+0.267949192431*sin(\x)});
				\draw [black,thick,domain=269.613242082:104.99778799324064] plot ({1.0+0.131652497587*cos(\x)}, {0.13165249758739564+0.131652497587*sin(\x)});
				\draw [black,thick,domain=284.99778799324065:119.99823039459253] plot ({0.9318516525781365+0.131652497587*cos(\x)}, {0.3859855926176458+0.131652497587*sin(\x)});
				\draw [black,thick,domain=299.9982303945925:134.99867279594446] plot ({0.8001991549907406+0.131652497587*cos(\x)}, {0.6140144073823547+0.131652497587*sin(\x)});
				\draw [black,thick,domain=314.99867279594434:149.99911519729625] plot ({0.6140144073823548+0.131652497587*cos(\x)}, {0.8001991549907405+0.131652497587*sin(\x)});
				\draw [black,thick,domain=329.99911519729625:164.99955759864807] plot ({0.3859855926176463+0.131652497587*cos(\x)}, {0.9318516525781363+0.131652497587*sin(\x)});
				\draw [black,thick,domain=344.99955759864815:180.0] plot ({0.13165249758739553+0.131652497587*cos(\x)}, {1.0+0.131652497587*sin(\x)});
				\draw [black,thick,domain=0.0:-164.99955759864815] plot ({-0.13165249758739558+0.131652497587*cos(\x)}, {1.0+0.131652497587*sin(\x)});
				\draw [black,thick,domain=15.000442401351913:-149.99911519729625] plot ({-0.3859855926176463+0.131652497587*cos(\x)}, {0.9318516525781363+0.131652497587*sin(\x)});
				\draw [black,thick,domain=30.000884802703688:-134.9986727959444] plot ({-0.6140144073823539+0.131652497587*cos(\x)}, {0.8001991549907411+0.131652497587*sin(\x)});
				\draw [black,thick,domain=45.001327204055606:-119.99823039459255] plot ({-0.8001991549907407+0.131652497587*cos(\x)}, {0.6140144073823544+0.131652497587*sin(\x)});
				\draw [black,thick,domain=60.001769605407496:-104.99778799324059] plot ({-0.9318516525781368+0.131652497587*cos(\x)}, {0.3859855926176452+0.131652497587*sin(\x)});
				\draw [black,thick,domain=75.00221200675921:-90.00265440811114] plot ({-1.0000000000000002+0.131652497587*cos(\x)}, {0.13165249758739622+0.131652497587*sin(\x)});
				\draw [black,thick,domain=90.3867579182:-75.00221200675935] plot ({-1.0+0.131652497587*cos(\x)}, {-0.13165249758739564+0.131652497587*sin(\x)});
				\draw [black,thick,domain=104.99778799324065:-60.00176960540733] plot ({-0.9318516525781363+0.131652497587*cos(\x)}, {-0.3859855926176468+0.131652497587*sin(\x)});
				\draw [black,thick,domain=119.99823039459241:-45.001327204055656] plot ({-0.8001991549907419+0.131652497587*cos(\x)}, {-0.6140144073823529+0.131652497587*sin(\x)});
				\draw [black,thick,domain=134.99867279594434:-30.00088480270375] plot ({-0.6140144073823547+0.131652497587*cos(\x)}, {-0.8001991549907405+0.131652497587*sin(\x)});
				\draw [black,thick,domain=149.99911519729622:-15.000442401351904] plot ({-0.38598559261764614+0.131652497587*cos(\x)}, {-0.9318516525781364+0.131652497587*sin(\x)});
				\draw [black,thick,domain=164.99955759864815:0.0] plot ({-0.13165249758739558+0.131652497587*cos(\x)}, {-1.0+0.131652497587*sin(\x)});
				\draw [black,thick,domain=180.0:15.000442401351851] plot ({0.13165249758739575+0.131652497587*cos(\x)}, {-1.0+0.131652497587*sin(\x)});
				\draw [black,thick,domain=195.00044240135182:30.00088480270372] plot ({0.38598559261764503+0.131652497587*cos(\x)}, {-0.9318516525781368+0.131652497587*sin(\x)});
				\draw [black,thick,domain=210.00088480270378:45.00132720405557] plot ({0.6140144073823547+0.131652497587*cos(\x)}, {-0.8001991549907403+0.131652497587*sin(\x)});
				\draw [black,thick,domain=225.0013272040556:60.001769605407475] plot ({0.8001991549907406+0.131652497587*cos(\x)}, {-0.6140144073823546+0.131652497587*sin(\x)});
				\draw [black,thick,domain=240.00176960540742:75.00221200675936] plot ({0.9318516525781365+0.131652497587*cos(\x)}, {-0.38598559261764603+0.131652497587*sin(\x)});
				\draw [black,thick,domain=255.0022120067593:90.00265440811116] plot ({0.9999999999999999+0.131652497587*cos(\x)}, {-0.1316524975873968+0.131652497587*sin(\x)});
				\node () at (1.03, 0.0) {$\a$};
				\node () at (0.994903601078, 0.266583616456) {$\b$};
				\node () at (0.892006165898, 0.515) {$\c$};
				\node () at (0.728319984622, 0.728319984622) {$\a$};
				\node () at (0.515, 0.892006165898) {$\b$};
				\node () at (0.266583616456, 0.994903601078) {$\c$};
				\node () at (6.30693101561e-17, 1.03) {$\a$};
				\node () at (-0.266583616456, 0.994903601078) {$\b$};
				\node () at (-0.515, 0.892006165898) {$\c$};
				\node () at (-0.728319984622, 0.728319984622) {$\a$};
				\node () at (-0.892006165898, 0.515) {$\b$};
				\node () at (-0.994903601078, 0.266583616456) {$\c$};
				\node () at (-1.03, 1.26138620312e-16) {$\a$};
				\node () at (-0.994903601078, -0.266583616456) {$\b$};
				\node () at (-0.892006165898, -0.515) {$\c$};
				\node () at (-0.728319984622, -0.728319984622) {$\a$};
				\node () at (-0.515, -0.892006165898) {$\b$};
				\node () at (-0.266583616456, -0.994903601078) {$\c$};
				\node () at (-1.89207930468e-16, -1.03) {$\a$};
				\node () at (0.266583616456, -0.994903601078) {$\b$};
				\node () at (0.515, -0.892006165898) {$\c$};
				\node () at (0.728319984622, -0.728319984622) {$\a$};
				\node () at (0.892006165898, -0.515) {$\b$};
				\node () at (0.994903601078, -0.266583616456) {$\c$};
			\end{tikzpicture}
		\caption{A loop subdivided into bigons and triangles}
		\label{fig:hyptriangle}
	\end{figure}

	Let $D$ be the disk shown in \cref{fig:hyptriangle}. 
	It has $l$ vertices on the boundary, $l$ bigons adjacent to the boundary and $3\cdot 2^k - 2$ triangles. 
	Each triangle has a {\em depth}, the central triangle is at depth 0 and its neighbours are at depth 1 and so on. 
	For $k\geq i\geq 1$ there are $3\cdot 2^{i-1}$ triangles of depth $i$.

	There is a labelling of the vertices of \cref{fig:hyptriangle} by the set $\{\a, \b, \c\}$ such that the boundary of each triangle reads $\a, \b, \c$ or $\c, \b, \a$. 
	To obtain such a labelling we proceed as follows. 
	Each edge is in exactly two triangles, pick a labelling on the central triangle such that the boundary reads $\a,\b,\c$.
	Label the vertices of all triangles of level $i$ by reflecting the labelling of a triangle of level $i-1$ along the edge joining them. 
	
	Each triangle can now be filled with the spanning triangle with the given three boundary points. 
	The reflection technique used for the labelling allows us to choose the paths in the spanning triangles so that they agree on edges in their intersection. 
	
	Give each edge a {\em depth} by declaring it to be the minimum depth of triangles adjacent to the edge.
	Observe that the boundary vertices of an edge of depth $i$ are at distance $\leq 2^{k-i}$.
	
	The perimeter of the central triangle is bounded by $3\cdot 2^k$, and,  
	for each $k\geq i\geq 1$, the perimeter of a triangle of depth $i$ is bounded by $2^{k - i +1} + 2\cdot 2^{k-i} = 2^{k-i+2}$. 	
	Thus it follows from the previous section that the central spanning triangle has area $\leq 7f(12\cdot 3\cdot 2^k)$ and each spanning triangle of depth $i$ has area $\leq 7f(12\cdot 2^{k-i+2})$.
	
	Also, by \cref{rem:bigonshavezeroarea}, we see that each bigon has perimeter $\leq 4$ implying that there is a uniform bound $B>0$ on the area of all bigons appearing in our fillings. 
	
	We deduce that the area enclosed by $\gamma$ is
	$$\leq 7f(12\cdot 3\cdot 2^k) + \sum_{i=1}^k 7\cdot 3\cdot 2^{i-1}\cdot f(12\cdot 2^{k-i+2}) + 3\cdot 2^k\cdot B.$$ 

	Define $f'\colon \NN\to \RR_{>0}$ by $f'(n) := f(n)/n$ and assume that $f'$ is superadditive. 
	We obtain the following estimates:  
	\begin{align*}
	\sum_{i=1}^k 7\cdot 3\cdot 2^{i-1} f(12\cdot 2^{k-i+2})
	& = \sum_{i=1}^k 7\cdot 3\cdot 2^{i-1} \cdot 12\cdot 2^{k-i+2} f'(12\cdot 2^{k-i+2})\\
	&\leq 7\cdot 3\cdot 12 \cdot 2^{k+1} \sum_{i=1}^k f'(12\cdot 2^{k-i+2})\\
	&\leq 7\cdot 3\cdot 12 \cdot 2^{k+1} f'(\sum_{i=1}^k 12\cdot 2^{k-i+2})\\
	&\leq 7\cdot 3\cdot 12 \cdot 2^{k+2} f'(12\cdot 2^{k+2})\\
	& = 7\cdot 3\cdot f(12\cdot 2^{k+2})\\
	\end{align*}
	Thus we see that $\gamma$ has area bounded by 
	\begin{align*}
	7f(12\cdot 3\cdot 2^k) + 7\cdot 3\cdot f(12\cdot 4 \cdot2^k) + 3\cdot 2^k\cdot B
	&\leq 7\cdot 4 \cdot f(12\cdot 4\cdot 2^k) + 3\cdot 2^k\cdot B\\
	&\leq 28\cdot f(12\cdot 4 \cdot n) + 3\cdot n\cdot B\preccurlyeq f(n),
	\end{align*}
	where for the last line we use that $n\geq 2^k$. This provides us with the desired upper bound. 
	
	To obtain the lower bound, note that there are retractions $K\to G_i\times G_j$ for each choice of $i\neq j$. 
	Also by \cref{lem:Dehndirectproduct} we obtain that the maximum of the Dehn functions of the $G_i\times G_j$ is equivalent to that of $G_1\times G_2\times G_3$ and is also equivalent to $f$. 
	Thus we can conclude that the Dehn function of $K$ is equivalent to $f$

	In the case that $f(n)/n$ is not superadditive we proceed as follows. 
	Once again, we see that the area enclosed by $\gamma$ is
	$$\leq 7f(12\cdot 3\cdot 2^k) + \sum_{i=1}^k 7\cdot 3\cdot 2^{i-1}\cdot f(12\cdot 2^{k-i+2}) + 3\cdot 2^k\cdot B.$$ 
	
	Let $\bar{f}$ be the superadditive closure of $f$. We now get the following inequalities:
	\begin{align*}
	\sum_{i=1}^k 7\cdot 3\cdot 2^{i-1} f(12\cdot 2^{k-i+2})
	& \leq	\sum_{i=1}^k 7\cdot 3\cdot 2^{i-1} \bar{f}(12\cdot 2^{k-i+2})\\
	& \leq	\sum_{i=1}^k 7\cdot 3 \bar{f}(12\cdot 2^{k-1})\\
	& \leq	7\cdot k\cdot \bar{f}(3\cdot 12\cdot 2^{k})\\
	\end{align*}
	Thus we see that $\gamma$ has area bounded by 
	\begin{align*}
	7f(12\cdot 3\cdot 2^k) + 7\cdot k\cdot \bar{f}(3\cdot 12\cdot 2^{k}) + 3\cdot 2^k\cdot B
	&\leq 7(k+1)\bar{f}(3\cdot 12\cdot 2^{k}) + 3\cdot 2^k\cdot B\\
	&\leq 7(\log{n} + 1)\bar{f}(3\cdot 12\cdot n) + 3\cdot 2n\cdot B\\
	&\preccurlyeq \log{n}\cdot \bar{f}(n). 
	\end{align*}
\end{proof}

\begin{proof}[Proof of Corollary \ref{cor:triangle-thm-3}]
 For a null-homotopic word $w(X)$ of length $\leq n$ with corresponding edge loop $\gamma$ in $\mathrm{Cay}(K,U\cup T)$ our method provides us with a filling which is a product of conjugates of fillings for the boundary words of triangles and bigons as in Figure \ref{fig:hyptriangle} by subwords of $w(X)$. The same arguments as above show that we can construct fillings for the triangles such that the total area of the filling of $\gamma$ is $\preccurlyeq log(n)\cdot \overline{f}(n)$, where we may now choose the fillings of the regions of the triangle so that their filling diameter is $\leq g(12\cdot 3\cdot 2^k)\preccurlyeq g(n)$. Indeed, the length of the boundary loop of every region of a triangle is bounded above by the length of the boundary loop of the triangle of depth $0$, which is $\leq 12\cdot 3\cdot 2^k$, thus the filling diameter for every triangle is $\preccurlyeq g(n)$.
 
Since all triangles and bigons have a vertex on $\gamma$, the filling diameter of our filling for $w(X)$ is $\preccurlyeq n + g(n)\asymp g(n)$. We deduce that $(log(n)\cdot \overline{f}(n), g(n))$ is a filling pair for $K$.
\end{proof}

\section{Algebraic square method}
\label{sec:square}

To prove Theorem \ref{thm:algebraictriangle} we required that all three of the short exact sequences split. 
We now prove a theorem where we can relax the condition of splitting to the following:
\begin{definition}\label{def:Psplit}
	We say that $P = \{A, B\}$ is a {\em factoring} of $\ZZ^m$ if $A, B \leq  \ZZ^m$ and $\ZZ^m = A\oplus B$. 
	
	Let $1\to N\to G\xrightarrow{\phi} \ZZ^m\to 1$ be a short exact sequence. 
	Given a factoring $P$, we say that the short exact sequence {\em $P$-splits} if there are maps \begin{align*}
		s^1&\colon A\to G,\\
		s^2&\colon B\to G,
	\end{align*}
	such that $\phi\circ s^i = Id$. 
\end{definition}

It is clear that if a short exact sequence splits, then it $P$-splits for any $P$. 
The abelianization map $F_2\to \ZZ^2$ provides an example which does not split, but $P$-splits for certain $P$. Indeed it $P$-splits precisely if $A$ and $B$ are both non-trivial. 
With this terminology we can state the main result of this section. 

\algebraicsquare

In analogy to Corollaries \ref{cor:triangle-thm-1} and \ref{cor:triangle-thm-3} we deduce from Theorem \ref{thm:algebraicsquare} and its proof:
\begin{corollary}\label{cor:square-thm-1}
	If in Theorem \ref{thm:algebraicsquare} $\frac{f(n)}{n^2}$ is non-decreasing, then $\ker(\phi)$ is finitely presented and has Dehn function $f$.
\end{corollary}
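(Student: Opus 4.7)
The plan is to deduce the corollary directly from the moreover-part of Theorem \ref{thm:algebraicsquare} by verifying its hypothesis. Concretely, assuming $f(n)/n^2$ is non-decreasing, I will show that $f(n)/n$ is superadditive, so that the theorem applies verbatim.

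First, set $g(n) := f(n)/n$. Then $g(n)/n = f(n)/n^2$, which is non-decreasing by assumption. By Lemma \ref{lem:Quotient-condition-superadd}, any function $h: \NN \to \RR$ for which $h(n)/n$ is non-decreasing is superadditive; applying this with $h = g$ yields that $g = f(n)/n$ is superadditive. Thus the hypothesis of the moreover clause in Theorem \ref{thm:algebraicsquare} is satisfied.

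Second, since the $P$-splitting hypothesis and finite presentability of the $G_i$ are already in place by assumption (they are inherited directly from the corollary's reference to Theorem \ref{thm:algebraicsquare}), invoking the moreover clause gives that $K=\ker(\phi)$ is finitely presented and $\delta_K(n) \asymp f(n)$, which is the statement of the corollary.

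No step here requires revisiting the spanning-square construction: the entire content of the corollary is a one-line verification that non-decreasingness of $f(n)/n^2$ implies superadditivity of $f(n)/n$ via Lemma \ref{lem:Quotient-condition-superadd}. The only subtlety, and hence the closest thing to an obstacle, is to note that the hypothesis is stated in a form (non-decreasingness of $f(n)/n^2$) that is genuinely stronger than the hypothesis of the theorem (superadditivity of $f(n)/n$); this is what makes the deduction trivial rather than requiring any new filling argument.
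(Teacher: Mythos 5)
Your proposal is correct and matches the paper's intended argument: the paper deduces this corollary (like its triangle analogue, Corollary \ref{cor:triangle-thm-1}) by applying Lemma \ref{lem:Quotient-condition-superadd} to $f(n)/n$, exactly as you do, and then invoking the moreover-part of Theorem \ref{thm:algebraicsquare}. No discrepancies to report.
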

\begin{corollary}\label{cor:square-thm-3}
If $(f_i,g_i)$ is a filling pair for $G_i$ then the proof of Theorem \ref{thm:algebraicsquare} shows that $K$ admits a filling pair of the form $( log(n)\cdot \overline{f}(n),g(n))$, with $\overline{f}$ the super-additive closure of $f(n)\asymp n^2+\sum_{i=1}^4 f_i(n)$ and $g(n)\asymp n+ \sum_{i=1}^4 g_i(n)$.
\end{corollary}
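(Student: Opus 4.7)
The plan is to adapt the triangle method of Section \ref{sec:trianglemethod} to the setting of $P$-splittings, where each $G_i$ only carries partial sections $s^1_i\colon A \to G_i$ and $s^2_i\colon B\to G_i$ of $\phi_i$. These yield two families of ``twisting'' generators of $K$, one moving $A$-coordinates between factors and one moving $B$-coordinates. The reason the method requires four factors rather than three is that the $A$-twists and $B$-twists have to be routed through disjoint pairs of factors in order for the analogue of the spanning cell to close up; the cubic lower bound for $\ker(F_2^{\times 3}\to \ZZ^2)$ cited after the corollary shows three factors really do not suffice.

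I would first fix compatible generating sets. Pick $Z_A$ generating $A$ and $Z_B$ generating $B$, and let $X_i = Y_i \cup Z^1_i \cup Z^2_i$ with $Y_i\subset \ker\phi_i$ finite, $Z^1_i = s^1_i(Z_A)$, $Z^2_i = s^2_i(Z_B)$. For each pair $i\neq j$ and each $k\in\{1,2\}$, define $T^k_{ij}$ to be the set of elements of $K$ with $s^k_i(z)$ in coordinate $i$, $s^k_j(z)^{-1}$ in coordinate $j$ and identity elsewhere, where $z$ ranges over $Z_A$ if $k=1$ or $Z_B$ if $k=2$. Let $U_i$ be the embedding of $Y_i$ into the $i$-th coordinate and set $T = \bigcup T^k_{ij}$, $U = \bigcup U_i$. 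The analogue of Lemma \ref{lem:trianglegensets} to prove is that $T\cup U$ generates $K$, $\langle T\rangle$ is free abelian, and the various subcollections $\langle U_i \cup T\rangle$, $\langle U_i\cup U_j \cup T\rangle$, $\langle U_i\cup U_j\cup U_k\cup T\rangle$ map isomorphically onto direct products of the form $G_i\times \ZZ^m$, $G_i\times G_j\times \ZZ^m$ and $G_i\times G_j\times G_k\times \ZZ^m$ respectively. By Lemma \ref{lem:Dehndirectproduct} and Lemma \ref{lem:nice-representatives} their Dehn functions are all bounded above by $f$ after enlarging $f$ in its equivalence class.

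The main obstacle is constructing a bounded-area ``spanning cell'' on four prescribed points of $K$ playing the role of the spanning triangle in Figure \ref{fig:algebraicspanningtriangle}. My plan is to build a quadrilateral with corners $\a, \b, \c, \d \in K$ in which two opposite sides carry words that move material between factors $1$ and $2$ using only $T^1_{12}$-twists (and hence lie in a subgroup isomorphic to $G_1\times G_2 \times A$), while the other two sides move material between factors $3$ and $4$ using only $T^2_{34}$-twists (lying in $G_3\times G_4\times B$). One then subdivides the interior into smaller regions, each lying in one of the controlled subgroups identified above and with perimeter bounded by a uniform multiple of the perimeter $D$ of the outer quadrilateral. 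The crucial check is that the interior edges can be chosen consistently so that every sub-region is null-homotopic; this uses the direct sum decomposition $\ZZ^m = A\oplus B$ to conclude that for any element of $K$ the $A$- and $B$-parts of the $\phi_i(g_i)$ vanish independently, exactly as in the closing-up argument between Figures \ref{fig:algebraicspanningtriangle} and \ref{fig:lengthtri}. This yields an area bound $\leq C f(C D)$ for the spanning cell.

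Finally, a null-homotopic word $w$ of length $n$ in $K$ is filled by tiling with spanning quadrilaterals using the same recursive scheme of Figure \ref{fig:hyptriangle} (either replacing each triangle by a quadrilateral, or by cutting each spanning quadrilateral into two spanning triangles and applying the tiling unchanged). The area sum is identical to that in the proof of Theorem \ref{thm:algebraictriangle}: the superadditive telescoping gives $\delta_K(n)\asymp f(n)$ when $f(n)/n$ is superadditive, and the coarser estimate via the superadditive closure yields $\delta_K(n)\preccurlyeq \overline{f}(n)\log n$ in general. The lower bound $f\preccurlyeq \delta_K$ follows from the existence of retractions $K\to G_i\times G_j$ for pairs on which both $P$-sections can be arranged, combined with Lemmas \ref{lem:retraction} and \ref{lem:Dehndirectproduct}. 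The filling-pair assertion of Corollary \ref{cor:square-thm-3} is read off by tracking the diameters of the intermediate sub-regions throughout, exactly as for Corollary \ref{cor:triangle-thm-3}.
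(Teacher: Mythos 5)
The corollary is a light add-on: it asks you to re-examine the paper's proof of Theorem~\ref{thm:algebraicsquare} (which the paper has already given) and track the filling diameter alongside the area, exactly as Corollary~\ref{cor:triangle-thm-3} does for Theorem~\ref{thm:algebraictriangle}. Concretely, the van Kampen diagram constructed in Section~\ref{sec:square} is a gluing of spanning squares (each subdivided into 17 regions lying in subgroups of the form $G_i\times G_j$, $G_i\times\ZZ^k$, $G_i\times\ZZ^l$ or $\ZZ^m$) and bigons as in Figure~\ref{fig:hypsquare}; each region has boundary length $\leq 12\cdot 4\cdot 3^k\preccurlyeq n$, hence by the filling pair hypothesis and Lemma~\ref{lem:filling-pair-product} admits a filling of diameter $\preccurlyeq g(n)$; and since every spanning square and bigon has a vertex on the loop $\gamma$, the total diameter of the diagram is $\preccurlyeq n+g(n)\asymp g(n)$.

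Your proposal instead attempts to re-derive Theorem~\ref{thm:algebraicsquare} from scratch and relegates the actual content of the corollary to its last sentence. That last sentence is correct and is essentially the paper's proof, but the re-derivation has genuine gaps. The main one is the spanning cell: you propose a quadrilateral on $\a,\b,\c,\d$ in which two opposite sides are labelled only by generators moving material between factors $1$ and $2$ via $T^1_{12}$-twists, and the other two only via $T^2_{34}$-twists. A path using only such $T^1_{12}$-type generators (together with $Y_1,Y_2$) cannot change coordinates $3$ or $4$ at all, yet arbitrary adjacent corners of the square differ in all four coordinates; so such a side cannot connect them. The paper's spanning square (Figure~\ref{fig:algebraicsquare}) is considerably more intricate: each of the four outer sides is itself subdivided into four edges labelled by carefully chosen mixed generating sets, and the seventeen interior regions are keyed to the subgroup list of Lemma~\ref{lem:squaresubgroups}. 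A secondary issue: your claim that $\langle T\rangle$ (your notation, aggregating all twist generators) is free abelian is false --- for fixed $i$ the images $s_i^1(a)$ and $s_i^2(b)$ need not commute in $G_i$, so neither do the corresponding twist generators in $K$. For this corollary it is cleaner, and what the statement actually asks for, to take the paper's proof of Theorem~\ref{thm:algebraicsquare} as given and only track diameters, as in your closing sentence.
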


The general strategy will be the same as in the triangle method, except that we will replace triangles by squares and produce a different kind of filling for these squares. 
Given a loop in the Cayley graph labelled by a null-homotopic word with respect to a particular generating set for $K$, we start by subdividing its set of vertices into suitable subsets of 4-tuples. We will then construct spanning squares for these 4-tuples and glue them together to obtain a filling for our loop. As before each square will come with a decomposition into regions, such that their boundary words will be of length controlled by the perimeter of the square and lie in a group of the form $\ZZ^m, G_i\times\ZZ^l, G_i\times\ZZ^k$ or $G_i\times G_j$. Piecing these fillings together will thus allow us to obtain the desired bound on the Dehn function. 

The key innovation of this section is to take any four points $\a = (\a_1, \a_2, \a_3, \a_4)$, $\b = (\b_1, \b_2, \b_3, \b_4)$, $\c = (\c_1, \c_2, \c_3, \c_4)$ and $\d = (\d_1, \d_2, \d_3, \d_4)$ in $K$ and construct from them a square as in Figure \ref{fig:algebraicsquare} whose perimeter is controlled by $d(\a, \b), d(\b, \c), d(\c, \d)$ and $d(\a, \d)$, where $d$ is the product metric on $G_1\times G_2\times G_3\times G_4$. To do so we will make fundamental use of the fact that each of the sequences in Theorem \ref{thm:algebraicsquare} $P$-splits.
Once we have produced such a filling, the remainder of the argument will be similar to the argument given in Section \ref{sec:trianglemethod} and \cite{CarFor-17}.

As before the groups labelling the bounded regions in Figure \ref{fig:algebraicsquare} are given via explicit generating sets and the edges on their boundaries will be labelled by words in these generating sets. As in Section \ref{sec:trianglemethod}, we thus begin by defining these generating sets.

Let $W_i$ be a generating set for $G_i$. 
Let $P = \{A, B\}$ and $s_i^1, s_i^2$ be maps defining the $P$-splitting of $G_i\to \ZZ^m$, where $A\cong \ZZ^k$ and $B\cong \ZZ^l$.  
Let $\mathcal{A} = \{a_1, \dots, a_k\}$ be a basis for $A$ and $\mathcal{B} = \{b_1, \dots, b_l\}$ be a basis for $B$. Note that $\mathcal{A}\cup \mathcal{B}$ is a basis for $\ZZ^m$.
We then fix a finite generating set $Y_i\cup Z_i^1\cup Z_i^2$ of $G_i$, by choosing $Z_i^1 = \{s_i^1(a_1), \dots, s_i^1(a_k)\}$, $Z_i^2=\{s_i^2(b_1), \dots, s_i^2(b_l)\}$ and $Y_i = \{xs_i^1(p_A(\phi_i(x)))^{-1}s_i^2(p_B(\phi_i(x)))^{-1}\mid x\in W_i\}\subset \ker(\phi_i)$, where $p_A:\ZZ^m\to A$ and $p_B:\ZZ^m\to B$ are the canonical projections. 

\begin{notation*}
	In Lemma \ref{lem:squaresubgroups} we will define 14 sets which collectively generate $K$. 
	The reader should have the following in mind while looking at this notation: 
	
	$T_{ij}$ is a set which generates the kernel of $\phi$ restricted to $s_i^1(\ZZ^k)\times s_j^1(\ZZ^k)$. 
	Similarly, $U_{ij}$ is a set which generates the kernel of $\phi$ restricted to $s_i^2(\ZZ^l)\times s_j^2(\ZZ^l)$.
	Finally, $V_i$ is the image of $Y_i$ in $K$. 
\end{notation*}

We recommend that during a first reading the reader only skims the following technical lemma, where we define all of the generating sets and various subgroups generated by their unions, and only refers back to it as needed during the construction of the spanning square.

\begin{lemma}\label{lem:squaresubgroups}
	Let $Y_i, Z_i^j$ be as above. 
	Define the following subsets of $G_1\times G_2\times G_3\times G_4$.
	\begin{itemize}
		\item $T_{12} = \{(s_1^1(a), s_2^1(a)^{-1}, e, e)\mid a\in \mathcal{A}\}$,
		\item $T_{13} = \{(s_1^1(a), e, s_3^1(a)^{-1}, e)\mid a \in \mathcal{A}\}$,
		\item $T_{14} = \{(s_1^1(a), e, e, s_4^1(a)^{-1})\mid a\in \mathcal{A}\}$,
		\item $T_{23} = \{(e, s_2^1(a), s_3^1(a)^{-1}, e)\mid a\in \mathcal{A}\}$,
		\item $T_{24} = \{(e, s_2^1(a), e, s_4^1(a)^{-1})\mid a\in \mathcal{A}\}$,
		\item $T_{34} = \{(e, e, s_3^1(a), s_4^1(a)^{-1})\mid a\in \mathcal{A}\}$,
		\item $U_{12} = \{(s_1^2(b), s_2^2(b)^{-1}, e, e)\mid b\in \mathcal{B}\}$,
		\item $U_{13} = \{(s_1^2(b), e, s_3^2(b)^{-1}, e)\mid b\in \mathcal{B}\}$,
		\item $U_{14} = \{(s_1^2(b), e, e, s_4^2(b)^{-1})\mid b\in \mathcal{B}\}$,
		\item $U_{23} = \{(e, s_2^2(b), s_3^2(b)^{-1}, e)\mid b\in \mathcal{B}\}$,
		\item $U_{24} = \{(e, s_2^2(b), e, s_4^2(b)^{-1})\mid b\in \mathcal{B}\}$,
		\item $U_{34} = \{(e, e, s_3^2(b), s_4^2(b)^{-1})\mid b\in \mathcal{B}\}$,
		\item $V_1 = \{(y, e, e, e)\mid y\in Y_1\}$, 
		\item $V_2 = \{(e, y, e, e)\mid y\in Y_2\}$, 
		\item $V_3 = \{(e, e, y, e)\mid y\in Y_3\}$,
		\item $V_4 = \{(e, e, e, y)\mid y\in Y_4\}$.
	\end{itemize}
	Let $T_{ijk} = T_{ij}\cup T_{jk}\cup T_{ik}$ and $U_{ijk} = U_{ij}\cup U_{jk}\cup U_{ik}$.
	Let $T = \cup_{i,j} T_{ij}$, $U = \cup_{i, j} U_{ij}$, $V = \cup_i V_i$. 
	Then the following hold: 
	\begin{itemize}
		\item $\langle T\cup U\cup V\rangle  = \ker(\phi)$,
		\item $\langle T_{134}\cup U_{234}\cup V_3\cup V_4\rangle\cong G_3\times G_4$, 
		\item $\langle T_{123}\cup U_{234}\cup V_2\cup V_3\rangle\cong G_2\times G_3$, 
		\item $\langle T_{123}\cup U_{124}\cup V_1\cup V_2\rangle\cong G_1\times G_2$, 
		\item $\langle T_{124}\cup U_{234}\cup V_2\cup V_4\rangle\cong G_2\times G_4$, 
		\item $\langle T_{124}\cup U_{123}\cup V_1\cup V_2\rangle\cong G_1\times G_2$, 
		\item $\langle T_{124}\cup U_{134}\cup V_1\cup V_4\rangle\cong G_1\times G_4$, 
		\item $\langle T_{234}\cup U_{134}\cup V_3\cup V_4\rangle\cong G_3\times G_4$, 
		\item $\langle T_{123}\cup U_{134}\cup V_1\cup V_3\rangle\cong G_1\times G_3$, 
		\item $\langle T_{12}\cup U_{234}\cup V_2\rangle\cong G_2\times \ZZ^{l}$, 
		\item $\langle T_{124}\cup U_{34}\cup V_4\rangle\cong G_4\times \ZZ^{k}$, 
		\item $\langle T_{12}\cup U_{134}\cup V_1\rangle\cong G_1\times \ZZ^{l}$, 
		\item $\langle T_{123}\cup U_{34}\cup V_3\rangle\cong G_3\times \ZZ^{k}$, 
		\item $\langle T_{12}\cup U_{34}\rangle\cong \ZZ^{m}$. 
	\end{itemize}
\end{lemma}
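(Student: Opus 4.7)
The proof parallels that of \cref{lem:trianglegensets} and consists in unpacking the definitions. For each of the fifteen assertions, one direction is immediate: every generator in $T_{ij}$, $U_{ij}$ or $V_i$ manifestly lies in $\ker(\phi)$, and in the claimed product subgroup for the isomorphism statements. So the real work lies in the reverse direction.

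For the first claim, given $(g_1,g_2,g_3,g_4)\in\ker(\phi)$, I would expand each $g_i$ as a word in $X_i = Y_i \cup Z_i^1 \cup Z_i^2$ and process its letters one at a time. A letter in $Y_i^{\pm1}$ placed in coordinate $i$ contributes a $V_i^{\pm1}$-factor directly. A letter $s_i^1(a)^{\pm 1}$ in coordinate $i$ can be rewritten, at the cost of one element of $T_{ij}$, as the same letter transported to a different coordinate $j$; analogously for $s_i^2(b)^{\pm1}$ via $U_{ij}$. Iterating, all $s^1$- and $s^2$-contributions can be concentrated in one coordinate, say coordinate $4$, where the total exponents in $A$ and in $B$ both vanish --- because $\sum_i \phi_i(g_i) = 0$ in $\ZZ^m$ splits, via the factoring $P = \{A,B\}$, into $\sum_i \pi_A(\phi_i(g_i)) = 0$ in $A$ and $\sum_i \pi_B(\phi_i(g_i)) = 0$ in $B$. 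What remains is an element of the form $(e,e,e,h)$ lying in $\langle V_4\rangle$, completing the factorisation.

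For each isomorphism claim, I would verify that the projection onto the indicated coordinates is a bijection between the subgroup on the left and the direct product on the right. Surjectivity is by construction: the subsets included in each claim are precisely those needed to produce arbitrary words in $X_i$ in the distinguished coordinates, with forced but controlled side-effects in the remaining coordinates. Injectivity uses the $P$-splitting to separate $A$- from $B$-contributions: if an element projects trivially to the distinguished coordinates, then its $T$-induced $A$-exponents and $U$-induced $B$-exponents must cancel independently in every coordinate, forcing triviality in the whole product. The abelian claim $\langle T_{12}\cup U_{34}\rangle\cong\ZZ^m$ is the cleanest instance: the two subsets commute and embed into $A\oplus B = \ZZ^m$ under $\phi_1$ on the first coordinate together with $\phi_3$ on the third. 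The main technical obstacle is the non-abelian bookkeeping in the first claim, where the letter-transport operations must be commuted past the other letters in each $g_i$; this is handled using that $s_i^j(\ZZ^m)$ is an abelian subgroup of $G_i$, with the resulting commutator-type discrepancies absorbed into additional $V_i$-contributions.
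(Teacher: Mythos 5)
Your strategy (letter-by-letter transport of $Z$-letters via $T$ and $U$, removal of $Y$-letters via $V$, and projection-to-factors for the isomorphism claims) is the same as the paper's. However, there is a genuine gap in your argument for $\langle T\cup U\cup V\rangle = \ker(\phi)$.

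After transporting the $s^1$- and $s^2$-letters of $g_1,g_2,g_3$ into the fourth coordinate, you arrive at an element $(e,e,e,h)$ and assert that it lies in $\langle V_4\rangle$. That would require $h\in\langle Y_4\rangle$, but the only information you have is $\phi_4(h)=0$, i.e.\ $h\in N_4=\ker(\phi_4)$. The subgroup $\langle Y_4\rangle$ need \emph{not} equal $N_4$: the hypotheses only require $Y_4\subset N_4$ and $Y_4\cup Z_4^1\cup Z_4^2$ to generate $G_4$ (for instance $G_4=F_2$, $\phi_4\colon F_2\twoheadrightarrow\ZZ$, $Y_4$ a single free generator -- then $\langle Y_4\rangle\cong\ZZ$ while $N_4$ is infinitely generated). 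The same problem undermines your closing sentence: a conjugate $s_4^j(\cdot)\,y\,s_4^j(\cdot)^{-1}$ with $y\in Y_4$ lies in $N_4$ but not, in general, in $\langle Y_4\rangle$, so the ``commutator-type discrepancies'' cannot simply be absorbed into $V_4$-contributions.

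The paper closes this gap by performing one more round of transport on $h$ itself, choosing the targets so that the residue lands in \emph{abelian} subgroups: writing $h$ as a word in $X_4$, the $Z_4^1$-letters are sent to coordinate $2$ via $T_{24}$ (accumulating in $s_2^1(\ZZ^k)\leq G_2$) and the $Z_4^2$-letters to coordinate $3$ via $U_{34}$ (accumulating in $s_3^2(\ZZ^l)\leq G_3$), while $Y_4$-letters are cancelled by $V_4$. Because those target subgroups are abelian, the residues are determined by the total exponents, namely $s_2^1(p_A(\phi_4(h)))^{\pm1}$ and $s_3^2(p_B(\phi_4(h)))^{\pm1}$, both trivial since $\phi_4(h)=0$. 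This avoids any need to commute $Z_4$-letters past $Y_4$-letters inside $G_4$. Your sketch of the isomorphism claims, including $\langle T_{12}\cup U_{34}\rangle\cong\ZZ^m$, matches the paper's reasoning and is fine.
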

\begin{proof}
	For the first isomorphism observe that $T\cup U \cup V \subset K$ by definition. Thus, let $(g_1,g_2,g_3,g_4)\in K$. Applying elements from $T\cup U \cup V$, we obtain an element of the form $(e,e,e,h)$. Using elements from $T_{24}\cup U_{34}\cup V_4$ we then get an element of the form $(e,\left(s_2^1(p_A(\phi_4(h))\right)^{-1},\left(s_3^2(p_B(\phi_4(h))\right)^{-1},e)=(e,e,e,e)$, since $\phi_4(h)=e$.
	
	For the others a similar strategy can be employed. 
	For instance, looking at the second one, let $H = \langle T_{134}\cup U_{234}\cup V_3\cup V_4\rangle$. 
	The natural projection $H\to G_3\times G_4$ is surjective. 
	The kernel of this surjection is $\{(g_1, g_2, g_3, g_4)\in H\mid g_3 = g_4 = e\}$. 
	However, we see that if $(g_1, g_2, g_3, g_4)\in H$, then $g_1 = s_1^1(p_A(\phi_3(g_3)) + p_A(\phi_4(g_4)))^{-1}$ and $g_2=s_2^2(p_B(\phi_3(g_3))+p_B(\phi_4(g_4)))^{-1}$. 
	Thus if $g_3 = g_4 = e$, then also $g_1=g_2=e$, implying that the projection $H\to G_3\times G_4$ is an isomorphism.
	
	By projecting to appropriate factors the same reasoning can be applied to obtain the other 12 isomorphisms. To give two examples, for $\langle T_{12}\cup U_{234}\cup V_2\rangle$ we can consider the projection to $G_2\times G_3$, while for $\langle T_{12}\cup U_{34}\rangle$ we can consider the projection to $G_1\times G_3$.
\end{proof}

As in Section \ref{sec:trianglemethod}, we may assume that $f$ is chosen in its equivalence class such that it satisfies the conditions in Theorem \ref{thm:algebraicsquare} and bounds from above the Dehn functions of the groups $G_i\times G_j$, $\ZZ^m$, $G_i\times \ZZ^l$, $G_i\times \ZZ^k$ and $G_1\times G_2\times G_3\times G_4$ with respect to fixed choices of presentations for the given generating sets.

\subsection{Constructing spanning Squares}
\label{sec:Constr-algspanning-squares}
We shall start by giving the construction of the square shaped loops in $K$ which we will use in the proof of Theorem \ref{thm:algebraicsquare}, postponing the remainder of the argument until later. 

Let $\a = (\a_1, \a_2, \a_3, \a_4), \b = (\b_1, \b_2, \b_3, \b_4), \c = (\c_1, \c_2, \c_3, \c_4)$ and $\d = (\d_1, \d_2, \d_3, \d_4)$ be four elements of $K$.

Figure \ref{fig:algebraicsquare} provides a depiction of the square that we will construct to fill our loops. It consists of 17 different regions. All of these regions will be contained in subgroups of $K$ obtained by using the generating sets from Lemma \ref{lem:squaresubgroups}, which were induced by the $P$-splittings of the factors.
In particular, each region will be isomorphic to a group of the form $G_i\times G_j, G_i\times \ZZ^k, G_i\times \ZZ^l$ or $\ZZ^m$. 
This will allow us to bound the area of each region from above by $f(n)$, where $n$ is the perimeter of the region. 
Thus it will be crucial to construct all edges in Figure \ref{fig:algebraicsquare} in a way that allows us to control their length.
Moreover, our construction will take care of the fact that edges will be chosen so that they lie in both adjacent regions. 
For all of them their length will be linearly bounded in terms of the ``perimeter'' of the big boundary square.

\begin{remark}
\label{rem:existence-uniqueness-square}
	For many vertices we will label two entries by $\cdot$ together with a subscript. 
	These vertices represent points which differ from the original vertices by elements of $s_i^1(\ZZ^k)$ or $s_i^2(\ZZ^l)$. 
	The subscripts on the vertex are to keep track of which part of the splitting they belong to. 
	For instance, $(\cdot_1, \cdot_2, \b_3, \a_4)$ denotes the unique point in $K$ where the first coordinate is in $\a_1s_1^1(\ZZ^k)$, the second in $\a_2s_2^2(\ZZ^l)$, the third is $\b_3$ and the fourth is $\a_4$.
	Wherever we use this notation, the elements will be unique and their existence will be guaranteed; both will always follow from the fact that $P$ is a factoring. 
\end{remark}
\begin{figure}
	\begin{tikzpicture}[scale = 0.35]
	\draw (0, 0) -- (0, 40) -- (40, 40) -- (40, 0) --(0, 0);
	\draw (10, 0) -- (10, 40);
	\draw (30, 0) -- (30, 40);
	\draw (0, 10) -- (40, 10);
	\draw (0, 30) -- (40, 30);
	\draw (20, 0) -- (20, 10);
	\draw (0, 20) -- (10, 20);
	\draw (40, 20) -- (30, 20);
	\draw (20, 40) -- (20, 30);
	\draw (20, 10) -- (10, 20) -- (20, 30) -- (30, 20) -- (20, 10);

	\draw (5, 5.5) node   {\scriptsize $\langle T_{123}\cup U_{124}\cup V_1\cup V_2\rangle$};
	\draw (15, 5.5) node  {\scriptsize $\langle T_{123}\cup U_{134}\cup V_1\cup V_3\rangle$};
	\draw (25, 5.5) node  {\scriptsize $\langle T_{123}\cup U_{134}\cup V_1\cup V_3\rangle$};
	\draw (35, 5.5) node  {\scriptsize $\langle T_{234}\cup U_{134}\cup V_3\cup V_4\rangle$};
	
	\draw (5, 35.5) node  {\scriptsize $\langle T_{134}\cup U_{234}\cup V_3\cup V_4\rangle$};
	\draw (15, 35.5) node {\scriptsize $\langle T_{124}\cup U_{234}\cup V_2\cup V_4\rangle$};
	\draw (25, 35.5) node {\scriptsize $\langle T_{124}\cup U_{234}\cup V_2\cup V_4\rangle$};
	\draw (35, 35.5) node {\scriptsize $\langle T_{124}\cup U_{123}\cup V_1\cup V_2\rangle$};
	
	\draw (5, 15.5) node  {\scriptsize $\langle T_{123}\cup U_{234}\cup V_2\cup V_3\rangle$};
	\draw (14.5, 14.5) node[rotate = -45] {$\langle T_{123}\cup U_{34}\cup V_3\rangle$};
	\draw (25.5, 14.5) node[rotate = 45] {$\langle T_{12}\cup U_{134}\cup V_1\rangle$};
	\draw (35, 15.5) node {\scriptsize $\langle T_{124}\cup U_{134}\cup V_1\cup V_4\rangle$};
	
	\draw (5, 25.5) node  {\scriptsize $\langle T_{123}\cup U_{234}\cup V_2\cup V_3\rangle$};
	\draw (13.5, 26.5) node[rotate = 45] {$\langle T_{12}\cup U_{234}\cup V_2\rangle$};
	\draw (26.5, 26.5) node[rotate = -45] {$\langle T_{124}\cup U_{34}\cup V_4\rangle$};
	\draw (35, 25.5) node {\scriptsize $\langle T_{124}\cup U_{134}\cup V_1\cup V_4\rangle$};
	
	\draw (20, 20.5) node {$\langle T_{12}\cup U_{34}\rangle$};
	
	\draw (5, 4.5) node   {\scriptsize $\cong G_1\times G_2$};
	\draw (15, 4.5) node  {\scriptsize $\cong G_1\times G_3$};
	\draw (25, 4.5) node  {\scriptsize $\cong G_1\times G_3$};
	\draw (35, 4.5) node  {\scriptsize $\cong G_3\times G_4$};
	
	\draw (5, 34.5) node  {\scriptsize $\cong G_3\times G_4$};
	\draw (15, 34.5) node {\scriptsize $\cong G_2\times G_4$};
	\draw (25, 34.5) node {\scriptsize $\cong G_2\times G_4$};
	\draw (35, 34.5) node {\scriptsize $\cong G_1\times G_2$};
	
	\draw (5, 14.5) node  {\scriptsize $\cong G_2\times G_3$};
	\draw (13.5, 13.5) node[rotate = -45] {$\cong G_3\times \ZZ^k$};
	\draw (26.5, 13.5) node[rotate = 45] {$\cong G_1\times \ZZ^l$};
	\draw (35, 14.5) node {\scriptsize $\cong G_1\times G_4$};
	
	\draw (5, 24.5) node  {\scriptsize $\cong G_2\times G_3$};
	\draw (14.5, 25.5) node[rotate = 45] {$\cong G_2\times \ZZ^l$};
	\draw (25.5, 25.5) node[rotate = -45] {$\cong G_4\times \ZZ^k$};
	\draw (35, 24.5) node {\scriptsize $\cong G_1\times G_4$};
	
	\draw (20, 19.5) node {$\cong \ZZ^m$};
	
	\fill (0,   0) circle(1em) node[below right] {$(\d_1    , \d_2    , \d_3    , \d_4)$};
	\fill (0,  10) circle(1em) node[below right] {$(\a_1    , \d_2    , \cdot_1, \cdot_2)$};
	\fill (0,  20) circle(1em) node[below right] {$(\cdot_1, \d_2    , \a_3    , \cdot_2)$};
	\fill (0,  30) circle(1em) node[below right] {$(\cdot_1, \cdot_2, \a_3    , \d_4)$};
	\fill (0,  40) circle(1em) node[below right] {$(\a_1    , \a_2    , \a_3    , \a_4)$};
	\fill (10,  0) circle(1em) node[below right] {$(\d_1    , \c_2    , \cdot_1, \cdot_2)$};
	\fill (10, 10) circle(1em) node[below right] {$(\a_1    , \c_2    , \cdot_1, \cdot_2)$};
	\fill (10, 20) circle(1em) node[above left ] {$(\cdot_1, \c_2    , \b_3    , \cdot_2)$};
	\fill (10, 30) circle(1em) node[above right] {$(\cdot_1, \cdot_2, \b_3    , \d_4)$};
	\fill (10, 40) circle(1em) node[below right] {$(\cdot_1, \cdot_2, \b_3    , \a_4)$};
	\fill (30,  0) circle(1em) node[below right] {$(\cdot_2, \cdot_1, \c_3    , \d_4)$};
	\fill (30, 10) circle(1em) node[below right] {$(\cdot_2, \cdot_1, \b_3    , \d_4)$};
	\fill (30, 20) circle(1em) node[below right] {$(\a_1    , \cdot_1, \cdot_2, \d_4)$};
	\fill (30, 30) circle(1em) node[below right] {$(\a_1    , \c_2    , \cdot_2, \cdot_1)$};
	\fill (30, 40) circle(1em) node[below right] {$(\a_1    , \b_2    , \cdot_2, \cdot_1)$};
	\fill (40,  0) circle(1em) node[above left ] {$(\c_1    , \c_2    , \c_3    , \c_4)$};
	\fill (40, 10) circle(1em) node[above left ] {$(\cdot_2, \cdot_1, \b_3    , \c_4)$};
	\fill (40, 20) circle(1em) node[above left ] {$(\b_1    , \cdot_1, \cdot_2, \c_4)$};
	\fill (40, 30) circle(1em) node[above left ] {$(\b_1    , \c_2    , \cdot_2, \cdot_1)$};
	\fill (40, 40) circle(1em) node[above left ] {$(\b_1    , \b_2    , \b_3    , \b_4)$};
	\fill (20,  0) circle(1em) node[below right] {$(\d_1    , \cdot_1, \c_3    , \cdot_2)$};
	\fill (20, 10) circle(1em) node[below right] {$(\a_1    , \cdot_1, \b_3    , \cdot_2)$};
	\fill (20, 30) circle(1em) node[above right] {$(\cdot_1, \c_2    , \cdot_2, \d_4)$};
	\fill (20, 40) circle(1em) node[below right] {$(\cdot_1, \b_2    , \cdot_2, \a_4)$};
	\end{tikzpicture}
	\caption{A diagram of the spanning square. The regions of the square are labeled by the groups generated by elements used to traverse the perimeter.}
	\label{fig:algebraicsquare}
\end{figure}
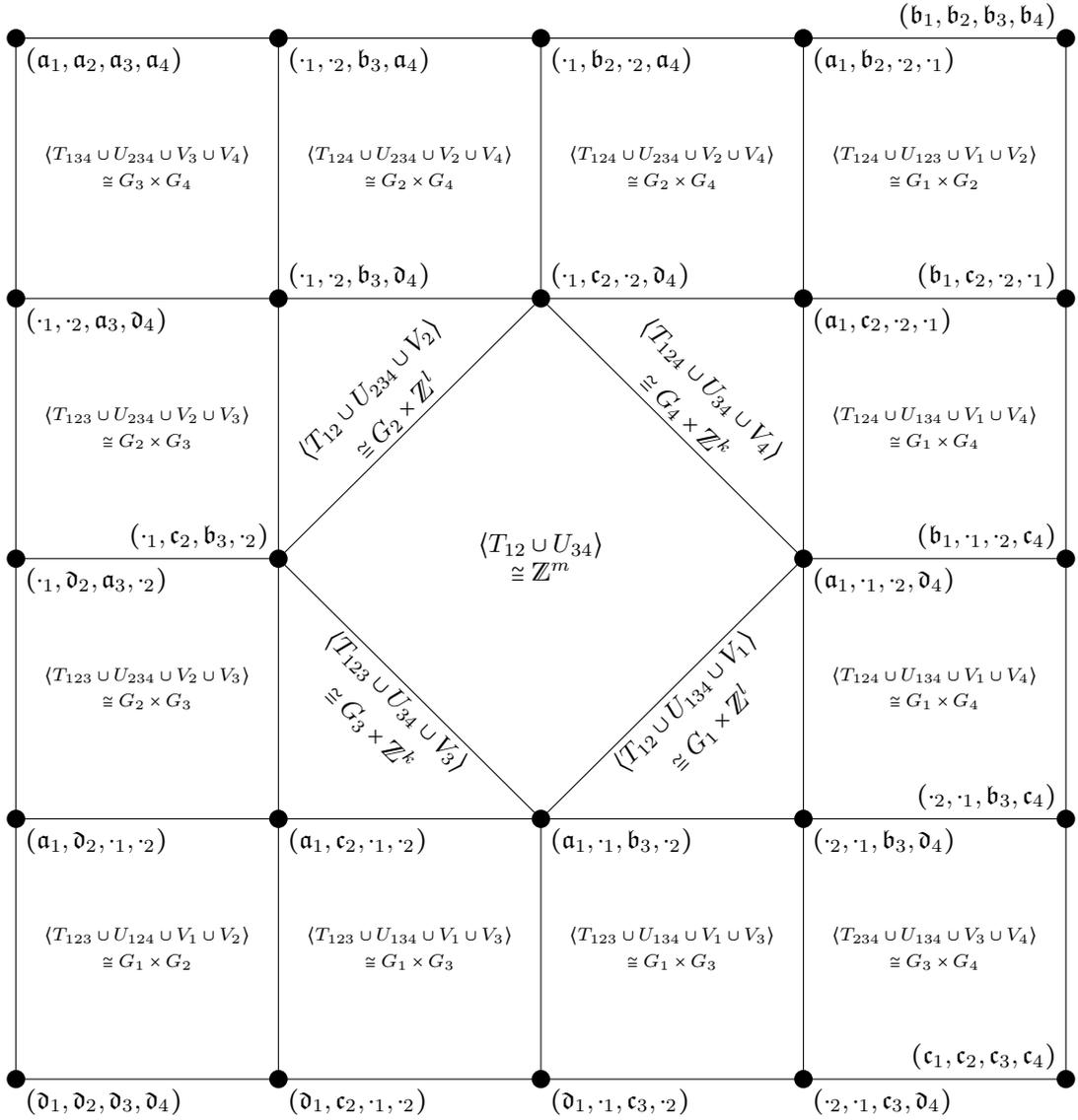

We will now provide a detailed construction of the square in Figure \ref{fig:algebraicsquare}. We start by observing that there is a natural action of the dihedral group on the unlabelled square. If two edges are in the same orbit we can bound their lengths using the same reasoning. We thus reduce to a fundamental domain for this action and focus on the vertices and edges in Figure \ref{fig:algedges}. We will denote by $d_i$ the word metric on $G_i$ with respect to the generating sets $Y_i\cup Z_i^1\cup Z_i^2$ for $1\leq i \leq 4$.

\begin{figure}
	\center
	\begin{tikzpicture}[scale = 0.6]
	\draw (0, 0) -- (20, 0) -- (20, 10) -- (10, 20) -- (0, 20) -- (0, 0);
	\draw (0, 10) -- (10, 10) -- (10, 0);
	\draw (20, 10) -- (10, 10) -- (10, 20);
	\fill (0,   0) circle(0.5em) node[below right] {$(\d_1  , \d_2  , \d_3  , \d_4)$};
	\fill (0,  10) circle(0.5em) node[below right] {$(\a_1  , \d_2  , \cdot_1, \cdot_2)$};
	\fill (0,  20) circle(0.5em) node[below right] {$(\cdot_1, \d_2  , \a_3  , \cdot_2)$};
	\fill (10,  0) circle(0.5em) node[below right] {$(\d_1  , \c_2  , \cdot_1, \cdot_2)$};
	\fill (10, 10) circle(0.5em) node[below right] {$(\a_1  , \c_2  , \cdot_1, \cdot_2)$};
	\fill (10, 20) circle(0.5em) node[      right] {$(\cdot_1, \c_2  , \b_3  , \cdot_2)$};
	\fill (20,  0) circle(0.5em) node[below left ] {$(\d_1  , \cdot_1, \c_3  , \cdot_2)$};
	\fill (20, 10) circle(0.5em) node[below left ] {$(\a_1  , \cdot_1, \b_3  , \cdot_2)$};
	
	\fill (5, 0) node[above] {$d_2(\d_2, \c_2)$};
	\fill (-0.5, 5) node[rotate = 90] {$d_1(\d_1, \a_1)$};
	\fill (5, 10) node[above] {$d_2(\d_2, \c_2)$};
	\fill (5, 20) node[above] {$\leq d_2(\d_2, \c_2) + d_3(\a_3, \b_3)$};
	\fill (19.5, 5) node[rotate = 90] {$\leq d_1(\a_1, \d_1) + d_3(\b_3, \c_3)$};
	\fill (9.5, 5) node[rotate = 90] {$d_1(\a_1, \d_1)$};
	\fill (15, 0) node[above] {$\leq d_2(\c_2, \d_2) + d_3(\c_3, \d_3)$};
	\fill (-0.5, 15) node[rotate = 90] {$\leq d_1(\a_1, \d_1) + d_3(\a_3, \d_3)$};
	\fill (15, 10) node[above] {$\leq d_1(\a_1, \d_1) + d_2(\c_2, \d_2) + d_3(\b_3, \d_3)$};
	\fill (9.5, 15) node[rotate = 90] {$\leq d_2(\c_2, \d_2) + d_1(\a_1, \d_1) + d_3(\b_3, \d_3)$};
	\fill (15.35, 15.35) node[rotate = -45] {$\leq 2\left(d_2(\c_2, \d_2) + d_1(\a_1, \d_1) + d_3(\b_3, \d_3)\right)$};
	
	\draw (5, 5) node   {$\langle T_{123}\cup U_{124}\cup V_1\cup V_2\rangle$};
	\draw (5, 4) node   {$\cong G_{1}\times G_2$};
	\draw (15, 5) node  {$\langle T_{123}\cup U_{134}\cup V_1\cup V_3\rangle$};
	\draw (15, 4) node   {$\cong G_{1}\times G_3$};
	\draw (5, 15) node  {$\langle T_{123}\cup U_{234}\cup V_2\cup V_3\rangle$};
	\draw (5, 14) node   {$\cong G_{2}\times G_3$};
	\draw (14, 14) node[rotate = -45] {$\langle T_{123}\cup U_{34}\cup V_3\rangle$};
	\draw (13.5, 13.5) node[rotate = -45]   {$\cong G_3\times \ZZ^k$};

	\end{tikzpicture}
	\caption{}
	\label{fig:algedges}
\end{figure}

We begin with the edge between the vertices labelled $(\d_1, \d_2, \d_3, \d_4)$ and $(\d_1, \c_2, \cdot_1, \cdot_2) = (\d_1, \c_2, \d_3', \d_4')$. 
To obtain it, we change the second coordinate as required by applying $d_2(\c_2,\d_2)$ generators from $V_2\cup T_{23}\cup U_{24}$. 
Since the only generators that change the third coordinate are from $T_{23}$ we see that $\d_3\cdot (\d_3')^{-1}\in s_3^1(\ZZ^k)$ and this element has distance $\leq d_2(\c_2, \d_2)$ from the identity for the generators $Z_3^1$. 
Similarly, $\d_4\cdot (\d_4')^{-1}\in s_4^2(\ZZ^l)$ has distance $\leq d_2(\c_2, \d_2)$ from the identity for the generators $Z_4^2$. 

Similarly, we can find suitable paths of length $d_1(\a_1, \d_1)$ between $(\d_1, \d_2, \d_3, \d_4)$ and $(\a_1, \d_2, \cdot_1, \cdot_2)$, of length $ d_1(\a_1, \d_1)$ between $(\d_1, \c_2, \cdot_1, \cdot_2)$ and $(\a_1, \c_2, \cdot_1, \cdot_2)$, and of length $ d_2(\c_2, \d_2)$ between $(\a_1, \d_2, \cdot_1, \cdot_2)$ and $(\a_1, \c_2, \cdot_1, \cdot_2)$. These are in the respective generating sets $V_2\cup T_{23}\cup U_{24}$, $V_1\cup T_{13}\cup U_{14}$ and $V_1\cup T_{13}\cup U_{14}$. Due to the uniqueness of the third and fourth coordinate in terms of the first two coordinates (see Remark \ref{rem:existence-uniqueness-square}) this completes the square labelled $\langle T_{123}\cup U_{124}\cup V_1 \cup V_2\rangle$. \vspace{.3cm}

Now consider the vertices labelled $(\d_1, \c_2, \cdot_1, \cdot_2) = (\d_1, \c_2, \d_3', \d_4')$ and $(\d_1, \cdot_1, \c_3, \cdot_2)$. 
Using generators from $V_3\cup U_{34}\cup T_{23}$ we obtain a path of length $d_3(\c_3, \d_3')$ between them. 
Using the triangle inequality we see that this requires at most $d_3(\c_3, \d_3) + d_3(\d_3, \d_3')$ generators. 
By the above we have that $d_3(\d_3, \d_3')\leq d_2(\c_2, \d_2)$. 
Thus the length of the path labelling this edge is $\leq d_3(\c_3, \d_3) + d_2(\c_2, \d_2)$. 

Similar reasoning shows that $(\a_1, \d_2, \cdot_1, \cdot_2)$ and $(\cdot_1, \d_2, \a_3, \cdot_2)$ are connected by a path of length $\leq d_1(\a_1, \d_1) + d_3(\a_3, \d_3)$ in $V_3\cup U_{34}\cup T_{13}$.\vspace{.3cm}

Next we construct the edge between $(\d_1, \cdot_1, \c_3, \cdot_2)$ and $(\a_1, \cdot_1, \b_3, \cdot_2)$.
For this we use generators from $V_1\cup V_3\cup T_{123}\cup U_{134}$.
To change the first coordinate we require $d_1(\a_1, \d_1)$ generators from $V_1\cup T_{12}\cup U_{14}$ and to change the third coordinate we require $d_3(\b_3, \c_3)$ generators from $V_3\cup T_{13}\cup U_{34}$. 
Thus, our path between the two vertices uses $d_1(\a_1, \d_1) + d_3(\b_3, \c_3)$ generators.
Similar reasoning shows that the vertices $(\cdot_1, \d_2, \a_3, \cdot_2)$ and $(\cdot_1, \c_2, \b_3, \cdot_2)$ can be connected using a path of length $ d_3(\a_3, \b_3) + d_2(\c_2, \d_2)$ in the generators $V_2\cup V_3 \cup T_{123}\cup U_{234}$.  \vspace{.3cm}

We are now left with constructing the paths labelling the edges of the triangle in Figure \ref{fig:algedges}. 
We will start by considering the edge with vertices $(\a_1, \c_2, \cdot_1, \cdot_2) = (\a_1, \c_2, \d_3'', \cdot_2)$ and $(\a_1, \cdot_1, \b_3, \cdot_2)$. 
To move between these vertices we use generators from $V_3\cup T_{23}\cup U_{34}$.
Once again, this requires $d_3(\d_3'', \b_3)\leq d_3(\b_3, \d_3) + d_3(\d_3, \d_3'')$ generators. 
We obtained $\d_3''\in \d_3\cdot s_3^1(\ZZ^k)$ from $\d_3$ by first taking a path of length $d_2(\c_2, \d_2)$ from $(\d_1, \d_2, \d_3, \d_4)$ to $(\d_1, \c_2, \cdot_1, \cdot_2)$ and then a path of length $d_1(\a_1, \d_1)$ from $(\d_1, \c_2, \cdot_1, \cdot_2)$ to $(\a_1, \c_2, \cdot_1, \cdot_2)$. 
Thus $d_3(\d_3, \d_3'')\leq d_1(\a_1, \d_1) + d_2(\c_2, \d_2)$. 

Analogous arguments provide a path of length $\leq d_3(\b_3,\d_3)+d_2(\c_2,\d_2)+d_1(\a_1,\d_1)$ between $(\a_1, \c_2, \cdot_1, \cdot_2)$ and $(\cdot_1, \c_2, \b_3, \cdot_2)$ using elements from $V_3\cup T_{13}\cup U_{34}$.\vspace{.3cm}

Finally we construct the diagonal edge of the triangle, connecting the vertices $(\cdot_1, \c_2, \b_3, \cdot_2) = (\a_1', \c_2, \b_3, \d_4'')$ and $(\a_1,\c_2',\b_3,\d_4''')=(\a_1,\cdot_1, \b_3,\cdot_2)$ using generators from $T_{12}$. For this we first observe that $\d_4'' =\d_4'''$, because this coordinate is uniquely determined by the projection of $\phi_3(\b_3)$ to $B\leq \ZZ^m$. 
The only generators in $V_3\cup T_{123}\cup U_{23}$ that change the first and second coordinate are from $T_{123}$. Moreover, the first coordinate is uniquely determined by the second and third coordinates and the second coordinate is uniquely determined by the first and third coordinates. We deduce that there is a geodesic path in generators from $Z_1^1$ connecting $\a_1$ to $\a_1'$ and that any such path lifts to a path in generators from $T_{12}$ connecting the two vertices at the ends of the diagonal edge of the triangle. Its length is $2\left( d_3(\b_3, \d_3) + d_1(\a_1, \d_1) + d_2(\c_2, \d_2)\right)$, since it is bounded above by the number of generators from $T_{123}$ used in the paths labelling the other two edges of the triangle.

The construction of all other paths between vertices in Figure \ref{fig:algebraicsquare} and the upper bounds on their lengths can be obtained by very similar arguments using the symmetries of the square. This completes our construction of the filling square.\vspace{.3cm}

Let $r_\a = \max_i\{d_i(\a_i, \b_i)\}$, $r_\b = \max_i\{d_i(\b_i, \c_i)\}, r_\c = \max_i\{d_i(\c_i, \d_i)\}$ and $r_\d = \max_i\{d_i(\a_i, \d_i)\}$. 
We denote $U:=r_\a + r_\b + r_\c + r_\d$ and observe that $U$ is bounded above by $D = d(\a, \b) + d(\b, \c) + d(\c, \d) + d(\a, \d)$, which we refer to as the {\em perimeter} of the square. 
This will permit us to bound the area of our square between four vertices purely in terms of their distances. 
Indeed, using triangle inequalities in the $X_i$, we deduce readily that all words labelling the edges of the square have length $\leq 4 U$ and that each boundary loop labelling a bounded region has perimeter at most $12U$. 
By definition of $f$, every region thus admits a filling of area $\leq f(12U)$.
In particular, the square admits a filling of area $\leq 17\cdot f(12U)$. 

\subsection{Tessellating a loop by squares}

We now turn to the remainder of the proof of Theorem \ref{thm:algebraicsquare}.
The paths on the boundary of the square are not chosen in a canonical way. 
When constructing our filling of a loop we thus need to take care of two things: 
{\em (i)} that squares can be glued together and 
{\em (ii)} that we can complete the filling by squares to a filling of our initial loop. 
(i) does not pose any issues in view of the fact that it will be evident that we will be able to choose the same edge path on adjacent squares. 
To resolve (ii) we require the following auxiliary result, where we call the path along a side of the square a {\em spanning path} (adapting the terminology from \cite{CarFor-17}).

\begin{lemma}\label{lem:bigons}
	If two vertices have distance 1 in $K$, then the spanning path between them has length $\leq 4$. 
\end{lemma}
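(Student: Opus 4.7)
The plan is to argue analogously to \cref{rem:bigonshavezeroarea} by tracking how the four edges of a spanning path (one side of a spanning square as in \cref{fig:algebraicsquare}) collapse when $\a$ and $\b$ differ by a single generator of $T\cup U\cup V$.

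First, observe that if $d_K(\a,\b)=1$, then $\a$ and $\b$ differ by a single generator from the three possible families:
\begin{itemize}
\item a generator from $V_i$ changes only the $i$-th coordinate, so $d_i(\a_i,\b_i)=1$ and $d_j(\a_j,\b_j)=0$ for $j\neq i$;
\item a generator from $T_{ij}$ or $U_{ij}$ changes the $i$-th and $j$-th coordinates, so $d_i(\a_i,\b_i)=d_j(\a_j,\b_j)=1$ and $d_k(\a_k,\b_k)=0$ for $k\notin\{i,j\}$.
\end{itemize}
In every case $\sum_{i=1}^4 d_i(\a_i,\b_i)\le 2$.

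Next, I would reread the construction in \cref{sec:Constr-algspanning-squares} to extract, for each of the four edges of the spanning path from $\a$ to $\b$ along the top side of \cref{fig:algebraicsquare}, an explicit upper bound on its length as a linear combination of the four quantities $d_i(\a_i,\b_i)$. These bounds were essentially already read off Figure \ref{fig:algedges} for the fundamental domain of the dihedral symmetry of the square, and by the symmetry they apply to the top side as well. Each of these four lengths is bounded by a constant times a subset of the $d_i(\a_i,\b_i)$, and the coefficients appearing in the sum of the four bounds are all small constants. The key subtlety is that the ``$\cdot$'' coordinates at the intermediate vertices are uniquely determined (see \cref{rem:existence-uniqueness-square}), and the correction terms from changing these $\cdot$-coordinates are themselves controlled by the distances $d_i(\a_i,\b_i)$ between the true coordinates of $\a$ and $\b$; when a coordinate distance vanishes, the corresponding correction vanishes too.

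Summing the four bounds and substituting $d_i(\a_i,\b_i)\le 1$ with at most two of them nonzero gives the desired constant upper bound of at most $4$. I expect the proof to proceed case-by-case on whether the connecting generator lies in $V_i$, $T_{ij}$, or $U_{ij}$: in the first case only one coordinate changes, so three of the four edges of the spanning path can be chosen to be trivial (or to consist of a single generator from the appropriate region) and only one edge carries nontrivial length $1$; in the second case two coordinates change, and the careful bookkeeping shows at most four unit-length edges are needed. The main obstacle is the clerical verification that when $d_k(\a_k,\b_k)=0$ one really can take the corresponding intermediate edge to be trivial, which boils down to observing that the uniquely determined $\cdot$-components at the two endpoints of that edge coincide.
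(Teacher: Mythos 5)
Your proposal takes essentially the same route as the paper: bound the spanning-path length by a linear combination of the coordinate distances $d_i(\a_i,\b_i)$ (which the paper records explicitly as $2d_1(\a_1,\b_1)+d_2(\a_2,\b_2)+d_3(\a_3,\b_3)+2d_4(\a_4,\b_4)$, read off from the edge bounds tallied in \cref{fig:algedges}), and then observe that a single generator of $K$ changes at most two of the $d_i$, each by $1$. One small inaccuracy in your case analysis: for a $V_1$ (or $V_4$) generator it is not true that only one edge of the spanning path is nontrivial --- the first edge has length $1$ and the adjacent edge picks up a correction of size $\leq d_1(\a_1,\b_1)=1$, so two edges may carry length --- but this does not affect the conclusion, and the genuine worst case (a $T_{14}$ or $U_{14}$ generator, so $d_1=d_4=1$) gives exactly $4$.
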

\begin{proof}
	Suppose that the two vertices are $\a$ and $\b$. 
	The spanning path between them has length bounded by $2d_1(\a_1, \b_1) + d_2(\a_2, \b_2) + d_3(\a_3, \b_3) + 2d_4(\a_4, \b_4)$. 
	Since $\a$ and $\b$ differ by a single generator of $K$, at most two summands are non-zero, in which case they are 1 or 2. This provides the desired upper bound. 
\end{proof}

The remainder of the proof of \cref{thm:algebraicsquare} is parallel to the one in Section \ref{sec:trianglemethod}, up to replacing triangles by squares and adjusting everything else accordingly. 

\begin{proof}[Proof of \cref{thm:algebraicsquare}]
	Let $\hat{\gamma}$ be the a loop in the Cayley graph of $K$ of length $n\geq 4$, parametrized by its length; it corresponds to a null-homotopic word in $T\cup U\cup V$ of the same length.
	We can find a $k$ such that $4\cdot 3^{k-1}\leq n\leq 4\cdot 3^{k}$. 
	As in Section \ref{sec:trianglemethod}, let $\gamma$ be the loop parametrized on the interval $\left[0,l\right]$, with $l = 4\cdot 3^k$, obtained by adding a trivial path to the end of $\hat{\gamma}$. 
	Since we have appended a trivial path $\gamma$ and $\hat{\gamma}$ have the same area. 
	We will show that $\gamma$ has area $\preccurlyeq f(n)$.
		\begin{figure}
			\center
			\begin{tikzpicture}[scale=6]

				\draw[black, thick, domain = 0:90] plot ({cos(\x)}, {sin(\x)});
				\draw[black, thick, domain = 180:270] plot ({cos(\x)}, {sin(\x)});
				\draw[black, thick, domain = 270:360] plot ({cos(\x)}, {sin(\x)});
				\draw[black, thick, domain = 90:180] plot ({cos(\x)}, {sin(\x)});
				
				\draw [black,thick,domain=269.613242082:180.0] plot ({1.0+1.0*cos(\x)}, {0.9999999999999999+1.0*sin(\x)});
				\draw [black,thick,domain=0.0:-90.3867579182] plot ({-1.0+1.0*cos(\x)}, {1.0+1.0*sin(\x)});
				\draw [black,thick,domain=90.3867579182:0.0] plot ({-1.0+1.0*cos(\x)}, {-0.9999999999999998+1.0*sin(\x)});
				\draw [black,thick,domain=180.0:90.3867579182] plot ({1.0+1.0*cos(\x)}, {-1.0000000000000002+1.0*sin(\x)});
				\draw [black,thick,domain=269.613242082:119.9982303945925] plot ({1.0+0.267949192431*cos(\x)}, {0.26794919243112264+0.267949192431*sin(\x)});
				\draw [black,thick,domain=299.9982303945925:149.99911519729628] plot ({0.7320508075688774+0.267949192431*cos(\x)}, {0.7320508075688773+0.267949192431*sin(\x)});
				\draw [black,thick,domain=329.99911519729625:180.0] plot ({0.26794919243112286+0.267949192431*cos(\x)}, {1.0+0.267949192431*sin(\x)});
				\draw [black,thick,domain=359.99999999999994:210.00088480270372] plot ({-0.26794919243112236+0.267949192431*cos(\x)}, {1.0000000000000002+0.267949192431*sin(\x)});
				\draw [black,thick,domain=30.000884802703702:-119.99823039459253] plot ({-0.7320508075688771+0.267949192431*cos(\x)}, {0.7320508075688776+0.267949192431*sin(\x)});
				\draw [black,thick,domain=60.001769605407496:-90.3867579182] plot ({-1.0+0.267949192431*cos(\x)}, {0.26794919243112264+0.267949192431*sin(\x)});
				\draw [black,thick,domain=89.99734559188883:-60.001769605407446] plot ({-0.9999999999999999+0.267949192431*cos(\x)}, {-0.2679491924311232+0.267949192431*sin(\x)});
				\draw [black,thick,domain=119.99823039459247:-30.00088480270375] plot ({-0.732050807568878+0.267949192431*cos(\x)}, {-0.7320508075688767+0.267949192431*sin(\x)});
				\draw [black,thick,domain=149.99911519729622:0.0] plot ({-0.26794919243112303+0.267949192431*cos(\x)}, {-1.0+0.267949192431*sin(\x)});
				\draw [black,thick,domain=179.99999999999994:30.00088480270373] plot ({0.2679491924311222+0.267949192431*cos(\x)}, {-1.0000000000000002+0.267949192431*sin(\x)});
				\draw [black,thick,domain=210.00088480270378:60.00176960540744] plot ({0.7320508075688772+0.267949192431*cos(\x)}, {-0.7320508075688771+0.267949192431*sin(\x)});
				\draw [black,thick,domain=240.00176960540745:90.3867579182] plot ({1.0+0.267949192431*cos(\x)}, {-0.26794919243112303+0.267949192431*sin(\x)});
				\draw [black,thick,domain=269.613242082:99.99764052612346] plot ({1.0+0.0874886635259*cos(\x)}, {0.08748866352592465+0.0874886635259*sin(\x)});
				\draw [black,thick,domain=279.9976405261233:109.99793546035794] plot ({0.9696155060244162+0.0874886635259*cos(\x)}, {0.25980769180793667+0.0874886635259*sin(\x)});
				\draw [black,thick,domain=289.99793546035795:119.99823039459241] plot ({0.909769735547401+0.0874886635259*cos(\x)}, {0.4242325948433998+0.0874886635259*sin(\x)});
				\draw [black,thick,domain=299.99823039459255:129.99852532882704] plot ({0.8222810720214766+0.0874886635259*cos(\x)}, {0.5757674051565992+0.0874886635259*sin(\x)});
				\draw [black,thick,domain=309.9985253288271:139.9988202630618] plot ({0.7098078142164789+0.0874886635259*cos(\x)}, {0.7098078142164801+0.0874886635259*sin(\x)});
				\draw [black,thick,domain=319.9988202630616:149.9991151972963] plot ({0.5757674051565996+0.0874886635259*cos(\x)}, {0.8222810720214765+0.0874886635259*sin(\x)});
				\draw [black,thick,domain=329.99911519729625:159.9994101315308] plot ({0.4242325948434016+0.0874886635259*cos(\x)}, {0.9097697355474003+0.0874886635259*sin(\x)});
				\draw [black,thick,domain=339.9994101315309:169.99970506576534] plot ({0.2598076918079369+0.0874886635259*cos(\x)}, {0.9696155060244159+0.0874886635259*sin(\x)});
				\draw [black,thick,domain=349.99970506576545:180.0] plot ({0.08748866352592348+0.0874886635259*cos(\x)}, {1.0+0.0874886635259*sin(\x)});
				\draw [black,thick,domain=0.0:-169.99970506576545] plot ({-0.08748866352592354+0.0874886635259*cos(\x)}, {1.0+0.0874886635259*sin(\x)});
				\draw [black,thick,domain=10.000294934234619:-159.99941013153082] plot ({-0.25980769180793717+0.0874886635259*cos(\x)}, {0.9696155060244159+0.0874886635259*sin(\x)});
				\draw [black,thick,domain=20.000589868469106:-149.99911519729625] plot ({-0.42423259484340103+0.0874886635259*cos(\x)}, {0.9097697355474006+0.0874886635259*sin(\x)});
				\draw [black,thick,domain=30.000884802703634:-139.99882026306165] plot ({-0.5757674051565987+0.0874886635259*cos(\x)}, {0.8222810720214772+0.0874886635259*sin(\x)});
				\draw [black,thick,domain=40.0011797369383:-129.99852532882716] plot ({-0.709807814216479+0.0874886635259*cos(\x)}, {0.7098078142164799+0.0874886635259*sin(\x)});
				\draw [black,thick,domain=50.001474671172986:-119.99823039459261] plot ({-0.8222810720214766+0.0874886635259*cos(\x)}, {0.5757674051565991+0.0874886635259*sin(\x)});
				\draw [black,thick,domain=60.001769605407496:-109.99793546035795] plot ({-0.9097697355474006+0.0874886635259*cos(\x)}, {0.42423259484340087+0.0874886635259*sin(\x)});
				\draw [black,thick,domain=70.00206453964205:-99.99764052612346] plot ({-0.9696155060244159+0.0874886635259*cos(\x)}, {0.25980769180793717+0.0874886635259*sin(\x)});
				\draw [black,thick,domain=80.00235947387671:-90.3867579182] plot ({-1.0+0.0874886635259*cos(\x)}, {0.08748866352592373+0.0874886635259*sin(\x)});
				\draw [black,thick,domain=90.3867579182:-80.00235947387671] plot ({-1.0+0.0874886635259*cos(\x)}, {-0.08748866352592373+0.0874886635259*sin(\x)});
				\draw [black,thick,domain=99.9976405261234:-70.00206453964199] plot ({-0.9696155060244159+0.0874886635259*cos(\x)}, {-0.25980769180793745+0.0874886635259*sin(\x)});
				\draw [black,thick,domain=109.99793546035798:-60.0017696054075] plot ({-0.9097697355474007+0.0874886635259*cos(\x)}, {-0.42423259484340053+0.0874886635259*sin(\x)});
				\draw [black,thick,domain=119.99823039459253:-50.00147467117307] plot ({-0.8222810720214774+0.0874886635259*cos(\x)}, {-0.575767405156598+0.0874886635259*sin(\x)});
				\draw [black,thick,domain=129.99852532882716:-40.0011797369383] plot ({-0.7098078142164788+0.0874886635259*cos(\x)}, {-0.7098078142164801+0.0874886635259*sin(\x)});
				\draw [black,thick,domain=139.9988202630616:-30.000884802703695] plot ({-0.5757674051566003+0.0874886635259*cos(\x)}, {-0.822281072021476+0.0874886635259*sin(\x)});
				\draw [black,thick,domain=149.99911519729622:-20.000589868469092] plot ({-0.424232594843401+0.0874886635259*cos(\x)}, {-0.9097697355474006+0.0874886635259*sin(\x)});
				\draw [black,thick,domain=159.99941013153082:-10.000294934234546] plot ({-0.25980769180793656+0.0874886635259*cos(\x)}, {-0.9696155060244162+0.0874886635259*sin(\x)});
				\draw [black,thick,domain=169.99970506576545:-5.684341886080802e-14] plot ({-0.08748866352592416+0.0874886635259*cos(\x)}, {-0.9999999999999999+0.0874886635259*sin(\x)});
				\draw [black,thick,domain=180.0:10.000294934234544] plot ({0.08748866352592372+0.0874886635259*cos(\x)}, {-1.0+0.0874886635259*sin(\x)});
				\draw [black,thick,domain=190.0002949342346:20.000589868469152] plot ({0.259807691807937+0.0874886635259*cos(\x)}, {-0.969615506024416+0.0874886635259*sin(\x)});
				\draw [black,thick,domain=200.00058986846918:30.000884802703737] plot ({0.42423259484340065+0.0874886635259*cos(\x)}, {-0.9097697355474007+0.0874886635259*sin(\x)});
				\draw [black,thick,domain=210.00088480270364:40.00117973693835] plot ({0.575767405156599+0.0874886635259*cos(\x)}, {-0.8222810720214769+0.0874886635259*sin(\x)});
				\draw [black,thick,domain=220.00117973693824:50.00147467117287] plot ({0.7098078142164791+0.0874886635259*cos(\x)}, {-0.7098078142164799+0.0874886635259*sin(\x)});
				\draw [black,thick,domain=230.00147467117293:60.001769605407524] plot ({0.8222810720214767+0.0874886635259*cos(\x)}, {-0.5757674051565992+0.0874886635259*sin(\x)});
				\draw [black,thick,domain=240.00176960540747:70.00206453964199] plot ({0.9097697355474004+0.0874886635259*cos(\x)}, {-0.42423259484340126+0.0874886635259*sin(\x)});
				\draw [black,thick,domain=250.00206453964216:80.00235947387664] plot ({0.9696155060244163+0.0874886635259*cos(\x)}, {-0.2598076918079358+0.0874886635259*sin(\x)});
				\draw [black,thick,domain=260.00235947387654:90.3867579182] plot ({1.0+0.0874886635259*cos(\x)}, {-0.08748866352592465+0.0874886635259*sin(\x)});

				\node () at (1.03, 0.0) {$\a$};
				\node () at (1.0143519856, 0.178857622997) {$\b$};
				\node () at (0.967883399409, 0.352280747625) {$\c$};
				\node () at (0.892006165898, 0.515) {$\d$};
				\node () at (0.789025776413, 0.662071237977) {$\a$};
				\node () at (0.662071237977, 0.789025776413) {$\b$};
				\node () at (0.515, 0.892006165898) {$\c$};
				\node () at (0.352280747625, 0.967883399409) {$\d$};
				\node () at (0.178857622997, 1.0143519856) {$\a$};
				\node () at (6.30693101561e-17, 1.03) {$\b$};
				\node () at (-0.178857622997, 1.0143519856) {$\c$};
				\node () at (-0.352280747625, 0.967883399409) {$\d$};
				\node () at (-0.515, 0.892006165898) {$\a$};
				\node () at (-0.662071237977, 0.789025776413) {$\b$};
				\node () at (-0.789025776413, 0.662071237977) {$\c$};
				\node () at (-0.892006165898, 0.515) {$\d$};
				\node () at (-0.967883399409, 0.352280747625) {$\a$};
				\node () at (-1.0143519856, 0.178857622997) {$\b$};
				\node () at (-1.03, 1.26138620312e-16) {$\c$};
				\node () at (-1.0143519856, -0.178857622997) {$\d$};
				\node () at (-0.967883399409, -0.352280747625) {$\a$};
				\node () at (-0.892006165898, -0.515) {$\b$};
				\node () at (-0.789025776413, -0.662071237977) {$\c$};
				\node () at (-0.662071237977, -0.789025776413) {$\d$};
				\node () at (-0.515, -0.892006165898) {$\a$};
				\node () at (-0.352280747625, -0.967883399409) {$\b$};
				\node () at (-0.178857622997, -1.0143519856) {$\c$};
				\node () at (-1.89207930468e-16, -1.03) {$\d$};
				\node () at (0.178857622997, -1.0143519856) {$\a$};
				\node () at (0.352280747625, -0.967883399409) {$\b$};
				\node () at (0.515, -0.892006165898) {$\c$};
				\node () at (0.662071237977, -0.789025776413) {$\d$};
				\node () at (0.789025776413, -0.662071237977) {$\a$};
				\node () at (0.892006165898, -0.515) {$\b$};
				\node () at (0.967883399409, -0.352280747625) {$\c$};
				\node () at (1.0143519856, -0.178857622997) {$\d$};
			\end{tikzpicture}

			\caption{A Disc tessellated by squares and bigons.}
			\label{fig:hypsquare}
		\end{figure}
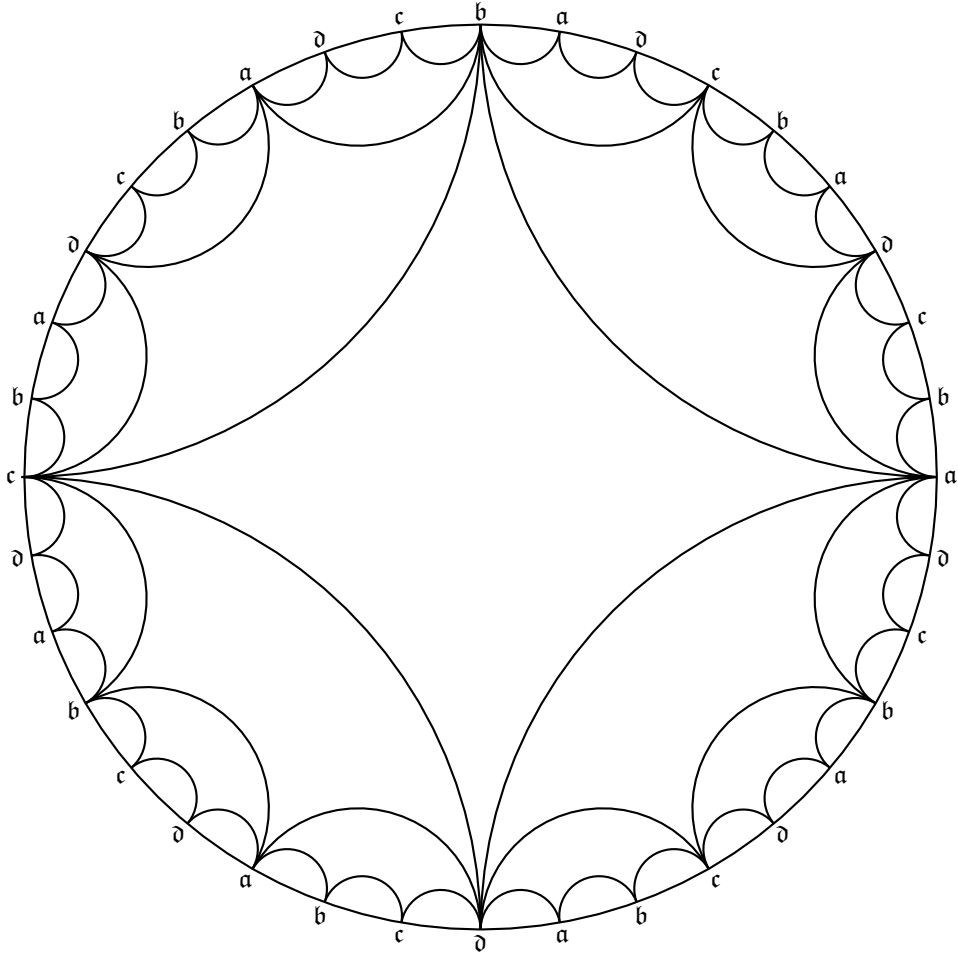
	
	Let $D$ be the disk shown in \cref{fig:hypsquare}. 
	It has $l$ vertices on the boundary, $l$ bigons adjacent to the boundary and $2\cdot 3^k - 1$ squares. 
	To each square we assign a {\em depth}: the central square is at depth 0 and its neighbours are at depth 1 and so on. 
	For $k\geq i\geq 1$ there are $4\cdot 3^{i-1}$ squares of depth $i$. 
	
	There is a labelling of the vertices of \cref{fig:hypsquare} by the set $\{\a, \b, \c, \d\}$ such that the boundary of each square reads $\a, \b, \c, \d$ or $\d, \c, \b, \a$. 
	To obtain such a labelling we proceed as follows. 
	Each edge is in exactly two squares, pick a labelling on the central square such that the boundary reads $\a,\b,\c,\d$.
	Label the vertices of squares of level $i$ by reflecting the labelling in the squares of level $i-1$ along the edge joining them. 

	Each square can now be filled with a spanning square for its four points on the boundary.
	The reflection technique used for the labelling allows us to choose the paths in the spanning squares so that they agree on edges in their intersection. More precisely, it enables us to choose the groups labelling the bounded regions in adjacent spanning squares so that they coincide in the regions along the shared edge. We then use that the choice of path in the Cayley graph between two vertices of the spanning square, say, $\a$ and $\d$, only depends on $\a$, $\d$ and the choice of groups labelling the adjacent bounded regions. In particular, it does not depend on $\b$ and $\c$. 
	
	Give each edge a {\em depth} by declaring it to be the minimum depth of squares adjacent to the edge.
	If $e$ is an edge of depth $i$, then its boundary vertices are at distance $\leq 3^{k-i}$.
	
	The central square has a perimeter bounded by $4\cdot 3^k$, and,  
	for each $k\geq i\geq 1$, the perimeter of a square of depth $i$ is bounded by $3^{k - i +1} + 3\cdot 3^{k-i} = 2\cdot 3^{k-i+1}$. 
		
	Thus the central spanning square has area $\leq 17f(12\cdot 4\cdot 3^k)$ and spanning squares of depth $i$ have area $\leq 17f(12\cdot 2\cdot 3^{k-i+1})$.
	
	Also by \cref{lem:bigons} , we see that each bigon has perimeter $\leq 5$ and as such there is a uniform bound $D$ on the area of all bigons appearing in our proof. 
	
	We deduce that the area enclosed by $\gamma$ is
	$$\leq 17f(12\cdot 4\cdot 3^k) + \sum_{i=1}^k 4\cdot 3^{i-1}\cdot17f(12\cdot 2\cdot 3^{k-i+1}) + 4\cdot 3^k\cdot D.$$ 

	Define $f'\colon \NN\to \RR$ by $n \cdot f'(n) := f(n)/n$ and assume that it is superadditive.
	We deduce the following estimates:
	\begin{align*}
	\sum_{i=1}^k 4\cdot 3^{i-1}\cdot17f(12\cdot 2\cdot 3^{k-i+1})
	& = \sum_{i=1}^k 4\cdot 3^{i-1}\cdot17\cdot 24\cdot 3^{k-i+1}f'(24\cdot 3^{k-i+1})\\
	&\leq 4\cdot 17 \cdot 24\cdot 3^k \sum_{i=1}^k f'(24\cdot 3^{k-i+1})\\
	&\leq 4\cdot 17 \cdot 24 \cdot 3^{k}  f'(24 \sum_{i=1}^k \cdot 3^{k-i+1})\\
	&\leq 4\cdot 17 \cdot 24 \cdot 3^{k}  f'(24\cdot 3^{k+1})\\
	&\leq 4\cdot 17 f(72\cdot 3^{k})\\
	\end{align*}
	Thus we see that the area of $\gamma$ is bounded above by 
	\begin{align*}
	17f(12\cdot 4\cdot 3^k) + 4\cdot 17 f(72\cdot 3^{k}) + 4\cdot 3^k\cdot D
	&\leq 85\cdot f(72 \cdot 3^k) + 4\cdot 3^k\cdot D\\
	&\leq 85 f(72 \cdot 3 n) + 4\cdot 3n\cdot D\\
	&\preccurlyeq f(n). 
	\end{align*}
	We deduce the desired upper bound. The lower bound can be obtained by retractions from $K$ to the $G_i\times G_j$. 	

The bounds on the Dehn function when $f(n)/n$ is not super-additive can be deduced from the above in analogy to the proof of \Cref{thm:algebraictriangle}.
\end{proof}
	
\begin{proof}[Proof of Corollary \ref{cor:square-thm-3}]
	The proof is analogous to the proof of Corollary \ref{cor:triangle-thm-3}.
\end{proof}

In analogy to the strategy described at the beginning of Section \ref{sec:trianglemethod}, one could also pursue a geometric approach using Lipschitz height maps to prove a geometric analogue of Theorem \ref{thm:algebraicsquare}. As before our algebraic approach allowed us to avoid some of the subtleties that one would face in a geometric approach.

\section{Applications}
\label{sec:applications}

In this section we will explore various applications of our main results. In particular, we will prove Theorems \ref{thm:Dehn-fct-Disons-groups} and \ref{thm:Dehn-fin-props}.

\subsection{Subdirect products of free groups with quadratic Dehn function}
\label{sec:SPFs-with-quadratic-Dehn}

As we discussed in the introduction, the class of SPFs provides a natural generalisation of the Stallings--Bieri groups. Considering that the Stallings--Bieri groups have quadratic Dehn functions, one may wonder if the same applies to SPFs. In general this turns out to be far from true; indeed, there are SPFs satisfying arbitrarily large polynomial lower bounds on their Dehn functions \cite{LloTes-18-II}. However, we will now explain that the result does remain true for SPFs with sufficiently high regularity properties.

We fix $r\geq 2$ and consider a finitely presented subgroup $G\leq F_{n_1}\times \dots \times F_{n_r}$ of a direct product of non-abelian free groups. We may assume that $G$ is \emph{full} ($G\cap F_{n_i}\neq 1$ for all $i$) and \emph{subdirect} (the projection of $G$ to every factor is surjective). Bridson, Howie, Miller and Short proved that if $G$ is of type $\mathcal{F}_{r}$ then $G$ is virtually a direct product of $\leq r$ free groups \cite{BriHowMilSho-02}. In particular, this means that all interesting examples of SPFs in a direct product of $r$ free groups will not be of finiteness type $\mathcal{F}_r$. 

Finiteness properties play an important role even among subgroups that are not of type $\mathcal{F}_r$, the general idea being that the stronger the finiteness properties the more regular the group. This is for instance illustrated by the fact that every subgroup of type $\mathcal{F}_k$ with $k> \frac{r}{2}$ is {\em virtually coabelian}, meaning that there are finite index subgroups $F_{m_i}\leq F_{n_i}$, $l\in \NN$ and a surjective homomorphism $\phi: F_{m_1}\times \dots \times F_{m_r}\to \ZZ^l$ such that $\ker(\phi) = G\cap\left(F_{m_1}\times \dots \times F_{m_r}\right)\leq G$ is a finite index subgroup \cite[Corollary 3.5]{Kuc-14}. Moreover, it is not hard to see that the {\em coabelian corank} $l$ of $G$ is an invariant, i.e. does not depend on the choice of finite index subgroups and surjective homomorphism. 

One may further argue that for fixed finiteness properties regularity decreases with increasing corank, the idea being that the larger the corank, the further the group is from being a direct product. A concrete manifestation of this intuition is provided by the following application of our work. 

\begin{theorem}
\label{thm:Fmcoabelian}
 For $r\geq 2$, $m<\frac{r}{2}$, and $n_i\geq 2$, $1\leq i \leq r$, let $K\leq F_{n_1}\times \dots \times F_{n_r}$ be a full subdirect product of type $\mathcal{F}_{r-m}$. Then $G$ is virtually coabelian of corank $l\geq 0$ and if $\left\lceil \frac{l}{2}\right\rceil\leq \frac{r}{4m}$, then $\delta_G(n)\asymp n^2$.
\end{theorem}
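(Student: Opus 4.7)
The strategy is to reduce Theorem \ref{thm:Fmcoabelian} to Theorem \ref{thm:algebraicsquare} by partitioning the free factors into four groups that satisfy its hypotheses. First, apply Kuckuck's theorem \cite[Corollary 3.5]{Kuc-14}: since $K$ is of type $\mathcal{F}_{r-m}$ and $r-m > r/2$, there exist finite-index free subgroups $F_{m_i} \leq F_{n_i}$ and a surjection $\phi \colon F_{m_1} \times \dots \times F_{m_r} \to \ZZ^l$ such that $K' := K \cap \prod_i F_{m_i} = \ker(\phi)$ is of finite index in $K$. Since Dehn functions are commensurability invariants, it suffices to show $\delta_{K'}(n) \asymp n^2$. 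The lower bound $\delta_{K'}(n) \succcurlyeq n^2$ is automatic: fullness and subdirectness imply that $K'$ contains a copy of $\ZZ^2$ (a commuting pair chosen from different factors), so $K'$ is non-hyperbolic.

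The second step is to control the interaction of $\phi$ with the individual free factors. Combining the type $\mathcal{F}_{r-m}$ of $K'$ with the $\Sigma$-invariant theory of coabelian subgroups of direct products \cite{Koc-10, Kuc-14, BriHowMilSho-13}, one can, after further refinement of the $F_{m_i}$, bound how ``deficient'' the individual restrictions $\phi_i := \phi|_{F_{m_i}} \colon F_{m_i} \to \ZZ^l$ can be. The outcome is calibrated so that any subset of $\{1,\dots,r\}$ of size at least $m \lceil l/2 \rceil$ contains $\lceil l/2 \rceil$ indices $i$ for which $\phi_i$ surjects onto $\ZZ^l$.

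Next, partition $\{1,\dots,r\}$ into four pairwise disjoint subsets $S_1,S_2,S_3,S_4$, each of cardinality at least $m \lceil l/2 \rceil$; this is possible by the hypothesis $\lceil l/2 \rceil \leq r/(4m)$. Set $G_j := \prod_{i \in S_j} F_{m_i}$ and $\phi^{(j)} := \phi|_{G_j}$. Fix a factoring $P = \{A,B\}$ of $\ZZ^l$ with $A \cong \ZZ^{\lceil l/2 \rceil}$ and $B \cong \ZZ^{\lfloor l/2 \rfloor}$. For each $j$, the previous step produces $\lceil l/2 \rceil$ indices $i \in S_j$ with $\phi_i$ surjective; assign each basis element of $A$ to a distinct such index and lift it inside the corresponding free factor to obtain a section $s_j^1 \colon A \to G_j$. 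Construct $s_j^2 \colon B \to G_j$ analogously. Lifts of different basis elements live in disjoint free factors of $G_j$ and thus commute, so both sections are homomorphisms. Consequently each short exact sequence $1 \to \ker(\phi^{(j)}) \to G_j \to \ZZ^l \to 1$ $P$-splits for the common factoring $P$. Finally, apply Theorem \ref{thm:algebraicsquare} to the homomorphism $G_1 \times G_2 \times G_3 \times G_4 \to \ZZ^l$, $(g_1,g_2,g_3,g_4) \mapsto \sum_j \phi^{(j)}(g_j)$, whose kernel is $K'$. Each $G_j$ is a direct product of non-abelian free groups, so has quadratic Dehn function by Lemma \ref{lem:Dehndirectproduct}; hence the Dehn function $f$ of $G_1 \times \cdots \times G_4$ satisfies $f(n) \asymp n^2$, and $f(n)/n \asymp n$ is superadditive. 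Theorem \ref{thm:algebraicsquare} then gives $\delta_{K'}(n) \asymp n^2$.

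The main obstacle is the second step: translating the qualitative finiteness hypothesis $\mathcal{F}_{r-m}$ into the precise combinatorial control used to build the $P$-splittings. The factor of $m$ in the condition $\lceil l/2 \rceil \leq r/(4m)$ originates here, reflecting a worst-case ratio between the size of each group $G_j$ and the number of its free factors whose restrictions of $\phi$ are surjective onto $\ZZ^l$. Making this step sharp will require invoking the known criteria for finiteness properties of coabelian subgroups of products of free groups and carefully tracking how they force a lower bound on the number of ``good'' factors in any large subset.
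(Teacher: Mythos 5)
Your outline---reduce via Kuckuck's theorem to a coabelian kernel, partition the factors into four groups $G_1,\dots,G_4$ admitting $P$-splittings with a common factoring, and apply Theorem~\ref{thm:algebraicsquare}---matches the paper's strategy. The gap is in your second step, and it is not merely a matter of unfilled detail: the claim you rely on, that every subset of $\{1,\dots,r\}$ of size at least $m\lceil l/2\rceil$ must contain $\lceil l/2\rceil$ indices $i$ for which the \emph{single-factor} restriction $\phi_i\colon F_{m_i}\to\ZZ^l$ surjects onto all of $\ZZ^l$, is false. The paper's own example in the same subsection provides a counterexample: take vectors $v_1,\dots,v_r\in\ZZ^l$ such that any $l$ of them are linearly independent, surjections $\phi_i\colon F_2\to\ZZ$, and set $\phi:=\sum_i v_i\cdot\phi_i$. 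Then each single-factor image is the cyclic group $\langle v_i\rangle$, so for $l\geq 2$ no single-factor restriction is surjective, yet $K=\ker(\phi)$ is full subdirect, coabelian of corank $l$, and of type $\mathcal{F}_{r-l}$. Taking $m=l\geq 2$ and $r=4m\lceil l/2\rceil$ satisfies all the hypotheses of the theorem while making your proposed combinatorial control impossible. The factor of $m$ in the hypothesis does not originate in counting ``good'' single factors.

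What the finiteness hypothesis actually yields, invoked in the paper via \cite[Corollary 5.4]{Llo-17}, is that for every subset $I\subset\{1,\dots,r\}$ with $|I|\geq m$ the image $\phi\bigl(\prod_{i\in I}F_{n_i}\bigr)$ is a \emph{finite-index} subgroup of $\ZZ^l$---a statement about blocks of $m$ factors, not about individual ones. The paper therefore partitions $\{1,\dots,r\}$ into $4\lceil l/2\rceil$ blocks $I_j$ each of size $\geq m$ (possible by $\lceil l/2\rceil\leq r/(4m)$), sets $H_j:=\prod_{i\in I_j}F_{n_i}$ and $A_j:=\phi(H_j)$, then passes to the finite-index subgroup $A:=\bigcap_j A_j\leq\ZZ^l$ and to $H'_j:=\phi^{-1}(A)\cap H_j$, so that each $H'_j$ now genuinely surjects onto $A\cong\ZZ^l$. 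Grouping $\lceil l/2\rceil$ consecutive $H'_j$ into each $G_i$ then produces the $P$-splitting for any factoring of $A$ with both parts of rank $\leq\lceil l/2\rceil$: each basis vector of either part is lifted into a distinct block $H'_j$, and lifts in disjoint commuting blocks assemble into a homomorphic section. Your commuting-lifts idea is exactly the right mechanism; the fix is to lift into whole blocks of $m$ factors rather than into single free factors, after first shrinking $\ZZ^l$ to $A$.
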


We recall that the bound on the corank in Theorem \ref{thm:Fmcoabelian} is optimal:  Dison proved that the kernel of the canonical homomorphism $F_2\times F_2\times F_2\to \ZZ^2$ induced by the abelianization on factors satisfies a cubical lower bound on its Dehn function \cite{Dis-09}, while Theorem \ref{thm:Fmcoabelian} shows that for $r\geq 4$ the kernel of the canonical homomorphism $F_2^{\times r}\to \ZZ^2$ induced by the abelianization on factors has quadratic Dehn function.

\begin{proof}[Proof of Theorem \ref{thm:Fmcoabelian}]
 Since $r-m > \frac{r}{2}$, \cite[Corollary 3.5]{Kuc-14} implies that there is a surjective homomorphism $\phi: F_{n_1}\times \dots \times F_{n_r}\to \ZZ^l$ such that $K=\ker(\phi)$ (after possibly passing to finite index subgroups of $K$ and of the $F_{n_i}$). Since $\lceil \frac{l}{2}\rceil \leq \frac{r}{4m}$, there is a partition $\left\{1,\dots,r\right\}=\bigsqcup _{j=1}^{4\lceil \frac{l}{2}\rceil} I_j$ into $4\lceil \frac{l}{2}\rceil$ sets of size $|I_j|\geq m$. Denoting $H_{j}:= \prod_{i\in I_j} F_{n_i}$ and using that $K$ is full subdirect of type $\mathcal{F}_{r-m}$ we obtain that $\phi(H_{j})=:A_{j} \leq \ZZ^l$ is a finite index subgroup for $1\leq j \leq 4\lceil \frac{l}{2}\rceil$ (see \cite[Corollary 5.4]{Llo-17}). Thus, for $A:=\cap_{1\leq j\leq 4\lceil \frac{l}{2}\rceil } A_i$ and $H'_j := \phi^{-1}(A)\cap H_j$, we obtain that the restriction
 \[
  \phi':H'_1\times \dots \times H'_{4\lceil \frac{l}{2}\rceil}\to A=\ZZ^l
 \]
 of $\phi$ is surjective on factors (i.e. $\phi'(H'_j)=A$ for all $j$). By construction $\ker(\phi')\leq K$ is a finite index subgroup.
 
 For $1\leq i\leq 4$ define $G_i:= H'_{1+(i-1)\lceil\frac{l}{2}\rceil}\times \dots \times H'_{i\lceil\frac{l}{2}\rceil}$. Since $\phi'(H'_i)=A=\ZZ^l$, we deduce that for every factoring $P=\left\{B_1,B_2\right\}$ of $\ZZ^l$ with ${\rm rk}_{\ZZ}(B_1),{\rm rk}_{\ZZ}(B_2)\leq \lceil \frac{l}{2}\rceil$ the restriction $\phi'|_{G_i} : G_i \to A$ admits a $P$-splitting. In particular, the homomorphism 
 \[
  \phi':G_1\times \dots \times G_4\to A
 \]
 satisfies the assumptions of Theorem \ref{thm:algebraicsquare} with $f(n)=n^2$. We deduce that $\ker(\phi')$ has Dehn function $n^2$. Thus, the same holds for its finite extension $K$, completing the proof.
\end{proof}

We observe that Dison's groups $K_m^r(l)$ arise as special case of Theorem \ref{thm:Fmcoabelian}. In particular, they have Dehn function $n^2$ proving Theorem \ref{thm:Dehn-fct-Disons-groups}. 

\begin{remark}\label{rem:Fm-Dehn}
 Dison's quintic upper bound on the Dehn function of $K_m^r(l)$ in \cite[Proposition 13.3(3)]{Dis-08} holds for $l\leq \frac{r}{2}>1$. This means that Theorem \ref{thm:Dehn-fct-Disons-groups} provides the precise Dehn functions for all groups covered by  \cite[Proposition 13.3(3)]{Dis-08} with the exception of a finite number of cases for every fixed value of $l$. We expect that the Dehn functions in these cases are also quadratic and that we merely needed to exclude them for technical reasons. In fact it seems reasonable to believe that there is a variation of our techniques that provides quadratic bounds also for these cases.
\end{remark}

Arguing similarly as in the proof of Theorem \ref{thm:Fmcoabelian} we also obtain the following example of groups with quadratic Dehn function and interesting finiteness properties.
\begin{example}
 Let $l\geq 1$ and $r\geq 3l$. Let $\left\{v_1,\dots,v_r\right\}\subset \ZZ^l$ be integer valued vectors such that for $1\leq i_1<\dots < i_l\leq r$ the subset $\left\{v_{i_1},\dots,v_{i_l}\right\}\subset \ZZ^l$ is linearly independent. For surjective homomorphisms $\phi_i:F_2\to \ZZ$, $1\leq i \leq r$, define a homomorphism $\phi:= \sum_{i=1}^r v_i\cdot \phi_i : (F_2)^{\times r}\to \ZZ^l$. After passing to a finite index subgroup of $\ZZ^l$ we may assume that $\phi$ is surjective. By our assumptions the restriction of $\phi$ to any $l$ factors has image a finite index subgroup of $\ZZ^l$. We can now argue similar to the proof of Theorem \ref{thm:Fmcoabelian} that, by Theorem \ref{thm:algebraictriangle}, $\delta_K(n)\asymp n^2$. Arguing via virtual surjections to $r-l$-tuples, we can moreover show that $K$ is of type $\mathcal{F}_{r-l}$, but not of type $\mathcal{F}_{r-l+1}$ \cite[Theorem C]{Koc-10}.
\end{example}

Finally, we can provide a precise version of Remark \ref{rem:Disons-Thm-11-3-4}:
\begin{theorem}\label{thm:Disons-Thm-11-3-4}
Let $r\geq3$, let $G_1,\dots, G_r$ be finitely presented groups and let $K:=\ker(G_1\times \dots \times G_r\stackrel{\phi}{\rightarrow} \ZZ^l)$ be a coabelian subgroup of corank $l\geq 0$. Denote by $f$ the Dehn function of $G_1\times \dots \times G_r$. If $\left\lceil \frac{l}{2}\right\rceil\leq \frac{r}{4}$ and the restriction of $\phi$ to every factor is virtually surjective, then $f(n)\preccurlyeq \delta_K(n)\preccurlyeq log (n) \cdot \overline{f} (n)$. If, moreover, $f(n)/n$ is superadditive, then $\delta_K(n)\asymp f(n)$.
\end{theorem}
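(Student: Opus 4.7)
The plan is to reduce the statement to a direct application of Theorem \ref{thm:algebraicsquare} by regrouping the $r$ factors of $G_1\times\dots\times G_r$ into four blocks, in complete analogy with the proof of Theorem \ref{thm:Fmcoabelian}.

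The first step is to pass to a finite-index situation in which $\phi$ restricts to a genuine surjection on each factor. Writing $A_i := \phi(G_i)\le\ZZ^l$ (finite-index by virtual surjectivity), I would set $A := \bigcap_{i=1}^r A_i$ (still finite-index in $\ZZ^l$) and $G_i' := G_i\cap \phi^{-1}(A)$, a finite-index subgroup of $G_i$ for which $\phi|_{G_i'}$ surjects onto $A$. Then $K' := \ker\bigl(\phi\colon G_1'\times\dots\times G_r'\to A\bigr)$ has finite index in $K$, so $\delta_K\asymp\delta_{K'}$, and the Dehn function of $G_1'\times\dots\times G_r'$ is asymptotically equivalent to $f$, since finite-index subgroups of finitely presented groups share the same Dehn-function class.

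Using $\lceil l/2\rceil\le r/4$, I would partition $\{1,\dots,r\}$ into $4\lceil l/2\rceil$ nonempty subsets $I_1,\dots,I_{4\lceil l/2\rceil}$, set $H_j := \prod_{i\in I_j} G_i'$ (each still surjecting onto $A$), and reassemble these into four blocks $\widetilde{G}_t := H_{(t-1)\lceil l/2\rceil+1}\times\dots\times H_{t\lceil l/2\rceil}$ for $1\le t\le 4$. This exhibits $G_1'\times\dots\times G_r' = \widetilde{G}_1\times\widetilde{G}_2\times\widetilde{G}_3\times\widetilde{G}_4$ with each $\widetilde{G}_t$ surjecting onto $A\cong\ZZ^l$. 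Fix any factoring $P=\{B_1,B_2\}$ of $A$ with $\mathrm{rk}(B_1)=\lceil l/2\rceil$ and $\mathrm{rk}(B_2)=\lfloor l/2\rfloor$. A $P$-splitting of the short exact sequence $1\to\ker(\phi|_{\widetilde{G}_t})\to\widetilde{G}_t\to A\to 1$ is then obtained by lifting chosen bases of $B_1$ and $B_2$ through the surjection $H_j\twoheadrightarrow A$ for any single factor $H_j\le\widetilde{G}_t$; since the definition of $P$-splitting only asks for set-theoretic sections of $\phi$ over $B_1$ and $B_2$, the surjectivity of $\widetilde{G}_t\twoheadrightarrow A$ is enough.

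With these $P$-splittings in place, Theorem \ref{thm:algebraicsquare} applied to $\phi\colon\widetilde{G}_1\times\widetilde{G}_2\times\widetilde{G}_3\times\widetilde{G}_4\to A$ yields $f(n)\preccurlyeq\delta_{K'}(n)\preccurlyeq\log(n)\cdot\overline{f}(n)$, strengthening to $\delta_{K'}(n)\asymp f(n)$ when $f(n)/n$ is superadditive. Passing from $K'$ back to its finite-index supergroup $K$ preserves the asymptotic class of the Dehn function and completes the proof. The argument is structurally mechanical: the only ingredients are the finite-index reduction and the (essentially trivial) $P$-splittings for each block, so the main (mild) obstacle is just the bookkeeping involved in reducing the $r$-factor product to the four-factor product that Theorem \ref{thm:algebraicsquare} expects.
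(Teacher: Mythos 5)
Your overall strategy is the paper's strategy: pass to finite index to make $\phi$ honestly surjective on each factor, use $\lceil l/2\rceil\le r/4$ to regroup the $r$ factors into blocks and the blocks into four ``super-factors'' $\widetilde G_1,\dots,\widetilde G_4$, construct a $P$-splitting of each $\phi|_{\widetilde G_t}$, and invoke Theorem~\ref{thm:algebraicsquare}. The finite-index bookkeeping at the start and end is fine. The problem lies in how you build the $P$-splittings.

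You assert that a $P$-splitting ``only asks for set-theoretic sections of $\phi$ over $B_1$ and $B_2$,'' and on that basis you lift the chosen bases of $B_1$ and $B_2$ through a single factor $H_j\le\widetilde G_t$. This is not what the definition requires. The maps $s^1,s^2$ in Definition~\ref{def:Psplit} have to be group homomorphisms: the whole machinery of Lemma~\ref{lem:squaresubgroups} and the spanning-square construction uses that $s^1(A)$, $s^2(B)$ are \emph{abelian subgroups} of $G$ isomorphic to $A$, $B$ (e.g.\ the identification $\langle T_{12}\cup U_{34}\rangle\cong\ZZ^m$ and the uniqueness claims in Remark~\ref{rem:existence-uniqueness-square} both fail for mere set-theoretic lifts). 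The paper's own remark that the abelianization $F_2\to\ZZ^2$ is $P$-split \emph{precisely} when both $A$ and $B$ are nontrivial would be false if set-theoretic sections sufficed. With the correct definition, lifting a rank-$\lceil l/2\rceil$ abelian group through a single surjection $H_j\twoheadrightarrow\ZZ^l$ need not be possible at all: if $H_j$ is, say, a free group, it contains no $\ZZ^{\lceil l/2\rceil}$ when $l\geq 3$, so no homomorphic section over $B_1$ exists through that factor.

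The fix---and what the paper actually does in the proof of Theorem~\ref{thm:Fmcoabelian}, which Theorem~\ref{thm:Disons-Thm-11-3-4} follows verbatim with $m=1$---is to use that each block $\widetilde G_t$ is a direct product of $\lceil l/2\rceil$ of the $H_j$'s, each of which surjects onto $A$. Choose a basis $\{b_1,\dots,b_{k}\}$ of $B_1$ (with $k=\mathrm{rk}\,B_1\le\lceil l/2\rceil$) and lift $b_s$ to an element of the $s$-th direct factor $H_{j_s}$ of $\widetilde G_t$. Because the lifts sit in distinct direct factors, they pairwise commute, so they define a genuine homomorphism $s^1\colon B_1\to\widetilde G_t$ with $\phi\circ s^1=\mathrm{id}$. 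Do the same for $B_2$ (one may even reuse the same factors). This is precisely why the partition into $4\lceil l/2\rceil$ pieces and the condition $\lceil l/2\rceil\le r/4$ are needed: each block must contain at least $\lceil l/2\rceil$ factors to absorb the ranks of $B_1$ and $B_2$. Once the $P$-splittings are constructed this way, the rest of your argument goes through.
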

\begin{proof}
The proof is very similar to the proof of Theorem \ref{thm:Fmcoabelian} for $m=1$. Indeed, by our assumptions $\phi|_{G_i}:G_i\to \ZZ^l$ for $1\leq i \leq r$ is virtually surjective and we then argue as in the proof of Theorem \ref{thm:Fmcoabelian} that we can apply Theorem \ref{thm:algebraicsquare} to obtain the desired conclusions.
\end{proof}

To put our result into context: Dison proved that the Dehn function of a group $K$ satisfying the conditions of Theorem \ref{thm:Disons-Thm-11-3-4} satisfies an upper bound of $n\cdot \beta_1(n^2)+\beta_2(n)$ on its Dehn function, where $\beta_1(n)$ is the Dehn function of $G_1\times \dots \times G_{r-l}$ and $\beta_2(n)$ is the Dehn function of $G_{r-l+1}\times \dots \times G_r$ \cite[Theorem 11.3 (4)]{Dis-08}. Thus, our result provides a significant improvement on these bounds. The precise requirement in Dison's result is that $l\leq \frac{r}{2}>1$. This means that in analogy to Remark \ref{rem:Fm-Dehn} we need to exclude a finite number of cases for every value of $l$ when comparing to his work. Again we believe that this is merely for technical reasons and that the result should hold for all cases covered by \cite[Theorem 11.3 (4)]{Dis-08}.

\subsection{Finiteness properties and Dehn functions}\label{sec:finprops-Dehn-fcts}
We will now prove Theorem \ref{thm:Dehn-fin-props}, showing the existence of 1-ended irreducible groups of type $\mathcal{F}_{n-1}$ and not $\mathcal{F}_n$ with prescribed Dehn function. 
\Dehnfinprops
\begin{proof}[Proof of Theorem \ref{thm:Dehn-fin-props}]
Let $G$ be a group with Dehn function $f(n)\geq n^2$. By \cite[Corollary]{GubSap-99}, the Dehn function of the free product $G\ast F_2$ is the superadditive closure $\overline{f}$ of $f$. Thus, the direct product $(G\ast F_2)^{\times n}=G\ast F_2\times \dots \times G\ast F_2$ of $n\geq 3$ copies of $G\ast F_2$ has Dehn function $\overline{f}$. 

Let $\phi$ be the composition of the projection $G\ast F_2\to F_2$ with a surjective homomorphism $F_2\to \ZZ$. Let $\psi : (G\ast F_2)^{\times n}\to \ZZ$ be the unique homomorphism that restricts to $\phi$ on every factor and let $K:=\ker(\psi)$. The statements about $\delta_K(n)$ are immediate consequences of Theorem \ref{thm:algebraictriangle}.

The homomorphism $\psi$ factors through a homomorphism $\nu: (F_2)^{\times n}\to \ZZ$ which is surjective on every factor and thus has kernel $\ker(\nu)=\mathrm{SB}_n$ a Stallings--Bieri group. Because the canonical projection $(G\ast F_2)^{\times n}\to (F_2)^{\times n}$ is a retraction, the same is true for  its restriction to the surjective homomorphism $\ker(\psi)\to \ker(\phi)=\mathrm{SB}_n$. Since $\mathrm{SB}_n$ is not of type $\mathcal{F}_n$ we deduce that the same holds for $\ker(\psi)$. On the other hand the abelian case of the $n-(n+1)-(n+2)$ Conjecture \cite[Theorem 6.3]{Kuc-14} implies that $\ker(\psi)$ is of type $\mathcal{F}_{n-1}$.

The irreducibility of $K$ is a straight-forward consequence of the irreducibility of $\mathrm{SB}_n$.

Finally, it is not hard to prove that $K$ is 1-ended. Indeed, we can use the natural projections $q_i:K\to \ZZ$, $1\leq i\leq n$, induced by the maps $\phi$ on factors, to prove the existence of a path between any two points lying in (a priori distinct) unbounded components of the complement of a compact set in the Cayley graph for $K$. For this we observe that any compact subset in the Cayley graph of $K$ maps to a compact subset in the Cayley graph of $\ZZ$ under all of the $q_i$, while unbounded components map to an unbounded subset under at least one of the $q_i$. Given two points as above, we can then use two of the projections $q_i$ to construct a path between them which does not intersect the given compact set.
\end{proof}

\subsection{Applications to right-angled Artin groups}\label{sec:raags}

Given a finite graph $\Gamma$ with vertices $V(\Gamma)$ and edges $E(\Gamma)\subset V(\Gamma)\times V(\Gamma)$ we define the \emph{right-angled Artin group} (short: RAAG) $A_{\Gamma}$ by
\[
A_{\Gamma}:= \left\langle V(\Gamma)\mid \left[v,w\right]\mbox{ if } (v,w)\in E(\Gamma)\right\rangle.
\]  

The precise finiteness properties of arbitrary coabelian subgroups of right-angled Artin groups have been computed in \cite{MeiMeiVan-98,BuxGon-99}, generalising the results for Bestvina--Brady groups \cite{BesBra-97}. In contrast our understanding of their Dehn functions seems to be mostly limited to Bestvina--Brady groups \cite{Dis-08-II,CarFor-17,Cha-19,ABDDY-13}, which are the cocyclic subgroups of RAAGs obtained by mapping all generators to the same generator of $\ZZ$. It would be interesting to perform a general study of the Dehn functions of coabelian subgroups of RAAGs.

Here we provide two straight-forward applications of our results to Dehn functions of coabelian subgroups of RAAGs which admit a splitting as direct product of RAAGs. The first is obtained by applying Theorem \ref{thm:algebraicsquare} and the second by applying Theorem \ref{thm:algebraictriangle}.

\begin{theorem}\label{thm:4-factor-RAAG}
	Let $A_\Gamma = H_1\times H_2\times H_3\times H_4$ be a product of four right-angled Artin groups. 
	Suppose that we have $P$-split maps $\phi_i\colon H_i\to \ZZ^m$. 
	Let $\phi = \sum \phi_i$. 
	Then $K = \ker(\phi)$ has quadratic Dehn function. 
\end{theorem}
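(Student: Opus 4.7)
The plan is to apply Theorem \ref{thm:algebraicsquare} essentially directly, the main point being to verify that the Dehn function of the ambient product is quadratic and then to invoke the ``moreover'' clause.

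First I would identify the Dehn function $f$ of $A_\Gamma = H_1\times H_2\times H_3\times H_4$. Each right-angled Artin group $H_i$ acts geometrically on its Salvetti CAT(0) cube complex, so $\delta_{H_i}(n)\preccurlyeq n^2$; it is finitely presented. Assuming the non-degenerate case in which at least two of the factors are infinite (otherwise either $K$ is trivial or the statement collapses to a similar statement with fewer factors, which is subsumed by Theorems \ref{thm:algebraictriangle} or standard results on products), the group $A_\Gamma$ is not hyperbolic. By Lemma \ref{lem:Dehndirectproduct} this gives $f(n)\asymp n^2$.

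Next I would verify the hypotheses of Theorem \ref{thm:algebraicsquare}: each $H_i$ is finitely presented, each short exact sequence $1\to \ker(\phi_i)\to H_i\xrightarrow{\phi_i}\ZZ^m\to 1$ is $P$-split for the common factoring $P$ by assumption, and $\phi=\sum_i\phi_i$. Moreover, the function $n\mapsto f(n)/n \asymp n$ is superadditive (indeed additive). Therefore the ``moreover'' clause of Theorem \ref{thm:algebraicsquare} applies and yields
\[
\delta_K(n)\asymp f(n)\asymp n^2,
\]
which is the desired conclusion.

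There is no real obstacle to overcome here: once one accepts the quadratic Dehn function of RAAGs, the result is a one-line application of Theorem \ref{thm:algebraicsquare}. The only minor care needed is to rule out the degenerate cases where some $H_i$ is trivial (so $\ZZ^m$ is trivial and $K=A_\Gamma$ has quadratic Dehn function by Lemma \ref{lem:Dehndirectproduct}) or only one $H_i$ is infinite (in which case $K$ either has at most quadratic Dehn function as a direct factor argument, or the factors with trivial kernel can be reabsorbed and the result follows from Theorem \ref{thm:algebraictriangle}).
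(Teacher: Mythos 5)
Your proposal is essentially the same as the paper's proof, which simply says that the result ``follows immediately from Theorem~\ref{thm:algebraicsquare} and the fact that RAAGs have quadratic Dehn function''; you have just spelled out the details (that $f(n)\asymp n^2$ by Lemma~\ref{lem:Dehndirectproduct}, that $f(n)/n\asymp n$ is superadditive, and hence that the ``moreover'' clause gives $\delta_K\asymp n^2$). One small remark on your discussion of degenerate cases: a $P$-split map $\phi_i$ is necessarily surjective, and RAAGs are torsion-free, so if $m\geq 1$ then every $H_i$ is automatically infinite and $A_\Gamma\supseteq\ZZ^4$ is non-hyperbolic; the only genuinely degenerate case is $m=0$, where $K=A_\Gamma$. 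In particular your proposed fallback to Theorem~\ref{thm:algebraictriangle} when a factor is trivial is never needed (and would not apply anyway, since that theorem requires genuinely split, not merely $P$-split, sequences).
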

\begin{proof}
	This follows immediately from Theorem \ref{thm:algebraicsquare} and the fact that RAAGs have quadratic Dehn function. 
\end{proof}

\begin{remark}
	Suppose that $H_i=\ZZ^k\ast \ZZ^l$, $1\leq i \leq 4$  and that $\phi_i$ is the abelianisation map. Then the maps $\phi_i$ are $P$-split if and only if $P = \{\ZZ^k, \ZZ^l\}$. In particular, there are non-trivial applications of Theorem \ref{thm:algebraicsquare} for all choices of factoring.
\end{remark}

Using Theorem \ref{thm:algebraictriangle} instead of Theorem \ref{thm:algebraicsquare} we obtain:
\begin{theorem}\label{thm:3-factor-RAAG}
	Let $A_\Gamma = H_1\times H_2\times H_3$ be a product of three right-angled Artin groups. 
	Suppose that we have split surjections $\phi_i\colon H_i\to \ZZ^m$. 
	Let $\phi = \sum \phi_i$. 
	Then $K = \ker(\phi)$ has quadratic Dehn function. 
\end{theorem}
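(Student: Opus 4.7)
The plan is to verify the hypotheses of Theorem \ref{thm:algebraictriangle} and then invoke its moreover clause. First, each RAAG $H_i$ is finitely presented (the defining graph gives a finite presentation), so finite presentability of the factors is immediate. The split surjection hypothesis is precisely the statement that the short exact sequence
\begin{equation*}
1 \longrightarrow \ker(\phi_i) \longrightarrow H_i \xrightarrow{\phi_i} \ZZ^m \longrightarrow 1
\end{equation*}
is right-split, so the algebraic setup of Theorem \ref{thm:algebraictriangle} is satisfied verbatim.

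Next I would compute the Dehn function $f$ of $A_\Gamma = H_1 \times H_2 \times H_3$. The product of three RAAGs is itself a RAAG, corresponding to the join of the defining graphs $\Gamma_1 * \Gamma_2 * \Gamma_3$. Every RAAG acts properly and cocompactly on a CAT(0) cube complex (the Salvetti complex) and so satisfies a quadratic isoperimetric inequality; alternatively, one can use Lemma \ref{lem:Dehndirectproduct} together with the fact that the factors $H_i$ are RAAGs (hence have Dehn function at most $n^2$). Combined with the lower bound $n^2$ coming from the fact that $A_\Gamma$ is infinite and non-hyperbolic, this yields $f(n) \asymp n^2$.

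Finally, to invoke the strong form of Theorem \ref{thm:algebraictriangle} I need $f(n)/n$ to be superadditive. Taking the representative $f(n) = n^2$ in its equivalence class gives $f(n)/n = n$, which is trivially superadditive (in fact additive). Theorem \ref{thm:algebraictriangle} therefore yields $\delta_K(n) \asymp n^2$. There is essentially no obstacle beyond confirming that RAAGs (and hence products of RAAGs) have quadratic Dehn functions; the remaining content of the argument is a direct application of the algebraic triangle method.
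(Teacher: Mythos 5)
Your proposal is correct and takes essentially the same route as the paper: the paper's own proof consists precisely in observing that split surjections give right-split short exact sequences, that products of RAAGs are CAT(0) and hence have quadratic Dehn function, and that $n^2/n=n$ is superadditive, so Theorem \ref{thm:algebraictriangle} applies directly. The only minor extra observation you make (that $A_\Gamma$ is itself a RAAG via the graph join) is not needed once you have Lemma \ref{lem:Dehndirectproduct}, but it does no harm.
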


We also record the following simple existence condition for splittings and $P$-splittings.
\begin{proposition}\label{prop:graphical-admissibility}
	Let $\phi\colon A_\Gamma\to \ZZ^m$ be a homomorphism. 
	Let $P = \{A, B\}$
	Then $\phi$ is $P$-split if there are complete subgraphs $\Delta_1, \Delta_2$ of $\Gamma$ such that:
	\begin{itemize}
		\item $\phi\colon A_{\Delta_1\cup\Delta_2}\to \ZZ^m$ is surjective. 
		\item $\phi|_{A_{\Delta_1}}$ maps $A_{\Delta_1}$ onto $A$ and $\phi|_{A_{\Delta_2}}$ maps $A_{\Delta_2}$ onto $B$.
	\end{itemize}

	Also, $\phi$ is split, if there is a complete graph $\Delta$ on $n$ vertices such that the homomorphism $\phi\colon A_{\Delta}\to \ZZ^m$ is an isomorphism. 
\end{proposition}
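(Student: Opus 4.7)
My strategy rests on the central observation that $A_\Delta\cong\ZZ^{|V(\Delta)|}$ whenever $\Delta$ is a complete graph (since all defining relations are then commutators), and that the canonical homomorphism $A_{\Delta'}\to A_\Gamma$ induced by the inclusion of an induced subgraph $\Delta'\subseteq\Gamma$ is injective. This will reduce both assertions to standard facts about surjections between finitely generated free abelian groups.

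For the split case, I observe that $\phi$ restricts by hypothesis to an isomorphism $\phi|_{A_\Delta}\colon A_\Delta\xrightarrow{\cong}\ZZ^m$ (forcing $n=m$), and simply compose its inverse with the inclusion $A_\Delta\hookrightarrow A_\Gamma$ to obtain a section $s\colon\ZZ^m\to A_\Gamma$ of $\phi$.

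For the $P$-split case, the hypotheses produce surjections $\phi|_{A_{\Delta_1}}\colon A_{\Delta_1}\twoheadrightarrow A$ and $\phi|_{A_{\Delta_2}}\colon A_{\Delta_2}\twoheadrightarrow B$ from free abelian groups onto free abelian groups. Since free abelian groups are projective in the category of abelian groups, I will invoke the classical fact that each such surjection admits a group-theoretic section $\sigma_i$. Composing $\sigma_1$ and $\sigma_2$ with the respective inclusions $A_{\Delta_i}\hookrightarrow A_\Gamma$ then yields homomorphisms $s^1\colon A\to A_\Gamma$ and $s^2\colon B\to A_\Gamma$ satisfying $\phi\circ s^i=\mathrm{id}$, which is exactly the $P$-splitting condition. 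I note in passing that the surjectivity hypothesis on $\phi|_{A_{\Delta_1\cup\Delta_2}}$ is actually automatic from the second bullet (its image contains both $A$ and $B$, hence $A\oplus B=\ZZ^m$); it is merely useful to guarantee that $\phi$ itself is surjective, so that the short exact sequence $1\to\ker(\phi)\to A_\Gamma\to\ZZ^m\to 1$ to which the definition of $P$-splitting applies is well-defined.

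I do not foresee any real obstacle, as the whole argument reduces to unpacking definitions and invoking projectivity of free abelian groups. The only point demanding any care is the verification that each $A_{\Delta_i}$ genuinely embeds in $A_\Gamma$ so that the sections constructed on the abelian subgroups really land in $A_\Gamma$, but this is a standard feature of RAAGs and requires no further work.
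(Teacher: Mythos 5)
Your argument is correct, and it is the natural unpacking of the definitions; the paper in fact states this proposition without any proof at all, evidently regarding it as immediate. Your two small observations -- that $A_{\Delta_i}$ embeds in $A_\Gamma$ as a free abelian group because each $\Delta_i$ is a clique (hence an induced subgraph), and that the first bullet is redundant since the images of $A_{\Delta_1}$ and $A_{\Delta_2}$ already generate $A\oplus B=\ZZ^m$ -- are both accurate and are exactly the points one would want to check.
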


Its practical use is illustrated in the following application of \cref{thm:3-factor-RAAG}. 

\begin{example}
	Let $\Lambda$ be a hexagon, i.e a triangulation of $S^1$ with 6 vertices and six edges. 
	This a bipartite graph with bipartite vertex set $V_1\cup V_2$.
	Let $\psi\colon A_\Lambda\to \ZZ^2$ be the map given by mapping generators of $V_1$ to $(1,0)$ and generators of $V_2$ to $(0,1)$.	
	By \cref{prop:graphical-admissibility} $\psi$ is a split surjection. 
	Let $\phi: A_\Gamma = A_\Lambda\times A_\Lambda\times A_\Lambda\to \ZZ^2$ be the homomorphism that restricts to $\psi$ on each factor. 
	Then $\ker(\phi)$ has quadratic Dehn function by \cref{thm:3-factor-RAAG}.
\end{example}

\bibliography{References}
\bibliographystyle{amsplain}

\end{document}